\newcommand{\beqn}{\begin{equation*}}
\newcommand{\eeqn}{\end{equation*}}
\newcommand{\beq}{\begin{equation}}
\newcommand{\eeq}{\end{equation}}
\DeclareMathAlphabet{\mathpgoth}{OT1}{pgoth}{m}{n}
\DeclareMathAlphabet{\mathesstixfrak}{U}{esstixfrak}{m}{n}
\DeclareMathAlphabet{\mathboondoxfrak}{U}{BOONDOX-frak}{m}{n}
\definecolor{darkred}{rgb}{0.5,0,0}
\definecolor{darkgreen}{rgb}{0,0.5,0}
\definecolor{darkblue}{rgb}{0,0,0.5}
\newtheorem{theorem}{Theorem}[section]
\newtheorem{corollary}[theorem]{Corollary}
\newtheorem{proposition}[theorem]{Proposition}
\newtheorem{lemma}[theorem]{Lemma}
\newtheorem{lem}[theorem]{}
\theoremstyle{definition}
\newtheorem{definition}[theorem]{Definition}
\theoremstyle{remark}
\newtheorem{discussion}[theorem]{Discussion}
\newtheorem{remark}[theorem]{Remark}
\newtheorem{example}[theorem]{Example}
\newcommand{\blem}{\begin{lem} \rm}
\newcommand{\elem}{\end{lem}}
\newcommand\M{\mathcal{M}}
\newcommand\D{\mathcal{D}}
\renewcommand\M{\mathcal{M}}
\renewcommand\D{\mathbb{D}}
\newcommand\XX{\mathbb{X}}
\renewcommand\S{\mathcal{S}}
\newcommand{\LL}{\mathcal{L}}
\renewcommand{\L}{\mathcal{L}}
\newcommand{\J}{\mathcal{J}}
\newcommand{\U}{\mathcal{U}}
\newcommand{\F}{\mathcal{F}}
\newcommand{\N}{\mathbb{N}}
\newcommand{\R}{\mathbb{R}}
\newcommand{\C}{\mathbb{C}}
\newcommand{\CP}{\mathbb{C}P}
\newcommand{\cS}{\mathcal{S}}
\newcommand{\cM}{\mathcal{M}}
\newcommand{\h}{\mathfrak{h}}
\newcommand{\cU}{\mathcal{U}}
\newcommand{\Z}{\mathbb{Z}}
\newcommand{\Q}{\mathbb{Q}}
\newcommand{\cct}{\frac{d}{dt}}
\newcommand{\ccs}{\frac{d}{ds}}
\newcommand{\mvP}{\widetilde{P}}
\newcommand{\ddt}{\frac{\d}{\d t}}
\newcommand{\ccrho}{\frac{\partial}{\partial \rho}}
\newcommand{\ppth}{\frac{\partial}{\partial \theta}}
\renewcommand{\P}{\mathbb{P}}
\newcommand\lie[1]{\mathfrak{#1}}
\newcommand{\g}{\lie{g}}
\newcommand{\on}{\operatorname}
\newcommand\white{{\includegraphics[width=.05in]{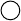}}}
\newcommand\black{{\includegraphics[width=.05in]{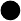}}}
\newcommand{\ainfty}{{$A_\infty$\ }}
\newcommand{\ab}{\ab}
\newcommand{\vdim}{\on{vdim}}
\newcommand{\dual}{\vee}
\renewcommand{\top}{{\on{top}}}
\newcommand{\Edge}{\on{Edge}}
\newcommand{\Cliff}{\on{Cliff}}
\newcommand{\Ver}{\on{Vert}}
\newcommand{\fin}{{\on{fin}}}
\newcommand{\Aut}{ \on{Aut} }
\newcommand{\Hol}{ \on{Hol} }
\newcommand{\Hom}{ \on{Hom}}
\newcommand{\Ind}{ \on{Ind}}
\renewcommand{\ker}{ \on{ker}}
\newcommand{\coker}{ \on{coker}}
\newcommand{\codim}{\on{codim}}
\newcommand\dirac{/\kern-1.2ex\partial} 
\newcommand\qu{/\kern-.7ex/} 
\newcommand\lqu{\backslash \kern-.7ex \backslash} 
\newcommand\dr{r_+ \kern-.7ex - \kern-.7ex r_-}
\newcommand{\labell}\label
\renewcommand{\d}{{\on{d}}}
\newcommand{\ol}{\overline}
\newcommand{\olp}{\ol{\partial}}
\newcommand\Phinv{\Phi^{-1}}
\newcommand\eps{\epsilon}
\newcommand\om{\omega}
\newcommand{\lan}{\langle}
\newcommand{\ran}{\rangle}
\newcommand{\ti}{\tilde}
\newcommand\cE{\mathcal{E}}
\newcommand\cR{\mathcal{R}}
\newcommand\cT{\mathcal{T}}
\newcommand\cF{\mathcal{F}}
\newcommand\cI{\mathcal{I}}
\newcommand\cA{\mathcal{A}}
\newcommand\cP{\mathcal{P}}
\newcommand\cJ{\mathcal{J}}
\newcommand\mE{\mathcal{E}}
\newcommand\curv{\on{curv}}
\newcommand\Map{\on{Map}}
\newcommand\ev{\on{ev}}
\newcommand\Vect{\on{Vect}}
\renewcommand\ul{\underline}
\newcommand\mO{\mathcal{O}}
\newcommand\G{\mathcal{G}}
\newcommand\E{\mathcal{E}}
\newcommand\B{\mathcal{B}}
\newcommand\grad{\on{grad}}
\newcommand\reg{{\on{reg}}}
\newcommand\bdefn{\begin{definition}}
\newcommand\edefn{\end{definition}}
\newcommand\bea{\begin{eqnarray*}}
\newcommand\eea{\end{eqnarray*}}
\newcommand\bcv{\left[ \begin{array}{r} }
\newcommand\ecv{\end{array} \right] }
\newcommand\bma{\left[ \begin{array}{l} }
\newcommand\ema{\end{array} \right]}
\newcommand\ben{\begin{enumerate}}
\newcommand\een{\end{enumerate}}
\newcommand\bex{\begin{example}}
\newcommand\bsj{\left\{ \begin{array}{rrr} }
\newcommand\esj{\end{array} \right\}}
\newcommand\Cone{\on{Cone}}
\newcommand\univ{{\on{univ}}}
\newcommand\eex{\end{example}}
\newcommand\crit{{\on{crit}}}
\newcommand\sx{*\kern-.5ex_X}
\newcommand{\Bl}{\on{Bl}}
\newcommand{\bGamma}{\mathbb{\Gamma}}
\newcommand{\bGam}{\mathbb{\Gamma}}
\newcommand{\cyl}{\on{cyl}}
\def\mathunderaccent#1{\let\theaccent#1\mathpalette\putaccentunder}
\def\putaccentunder#1#2{\oalign{$#1#2$\crcr\hidewidth \vbox
to.2ex{\hbox{$#1\theaccent{}$}\vss}\hidewidth}}
\renewcommand{\ab}{\on{ab}}
\newcommand{\DD}{\mathbb{D}}
\newcommand{\bv}{\mathbb{v}}
\newcommand{\bh}{\mathbb{h}}
\newcommand\cwo[1]{ { \color{darkred}  } }
\newcommand{\diam}{{\mathbin{\lozenge}}}
\newcommand\bigDiamond{\mathop{\mathpalette\bigDi@mond\relax}}
\newcommand\bigDi@mond[2]{%
  \vcenter{\hbox{\m@th
    \scalebox{\ifx#1\displaystyle 2\else1.2\fi}{$#1\Diamond$}%
  }}%
}
\newcommand\bigLozenge{\mathop{\mathpalette\bigL@zenge\relax}}
\newcommand\bigL@zenge[2]{%
  \vcenter{\hbox{\m@th
    \scalebox{\ifx#1\displaystyle 2\else1.2\fi}{$#1\blacklozenge$}%
  }}%
}
\definecolor{darkred}{rgb}{0.5,0,0}
\definecolor{darkpurple}{rgb}{0.5,0,.5}
\definecolor{darkpink}{rgb}{0,.5,0.5}
\definecolor{cyan}{rgb}{.25,0,0.75}
\definecolor{darkgreen}{rgb}{0,0.5,0}
\definecolor{darkblue}{rgb}{0,0,0.5}
\renewcommand{\diam}{\white}
\begin{document}

\title[Augmentation varieties and disk potentials]{Augmentation varieties and disk potentials I}


\author{Kenneth Blakey}
\address{Department of Mathematics, MIT, 182 Memorial Drive, Cambridge, MA 02139, U.S.A.} \email{kblakey@mit.edu}

\author{Soham Chanda}
\address{Mathematics-Hill Center, Rutgers University, 110 Frelinghuysen Road, Piscataway, NJ 08854-8019, U.S.A.} 
\email{sc1929@math.rutgers.edu}

\author{Yuhan Sun}
\address{Department of Mathematics, Imperial College London, South Kensington, London,
SW7 2AZ, U.K.}
\email{yuhan.sun@imperial.ac.uk}

\author{Chris Woodward}
\address{Mathematics-Hill Center, Rutgers University, 110 Frelinghuysen Road, Piscataway, NJ 08854-8019, U.S.A.}
\email{ctw@math.rutgers.edu}

\thanks{Chanda and Woodward were partially supported by NSF grant DMS 2105417 and Sun was partially supported by the EPSRC grant EP/W015889/1. Any opinions, findings, and conclusions or recommendations expressed in this material are those of the author(s) and do not necessarily reflect the views of the National Science Foundation.}

\begin{abstract} 
This is the first in a sequence of papers where we develop a field theory for Lagrangian cobordisms between 
Legendrians in circle-fibered contact
manifolds.   In particular, 
we show that Lagrangian fillings such as the Harvey-Lawson filling in any dimension define augmentations of Chekanov-Eliashberg differential graded algebras by counting configurations of holomorphic disks connected by gradient trajectories, as in Aganagic-Ekholm-Ng-Vafa \cite{vafaetal}; we also prove that for Legendrian lifts of monotone tori, the augmentation variety  is the zero level set of the Landau-Ginzburg potential of the Lagrangian projection, as suggested by Dimitroglou-Rizell-Golovko \cite{dr:bs}.   
In this part, we set up the analytical foundation of moduli spaces of pseudoholomorphic buildings.
\end{abstract}

\maketitle

\tableofcontents

\section{Introduction}

Contact homology as introduced by Eliashberg, Givental, and Hofer \cite{egh} is a homology theory whose generators come from closed Reeb orbits and whose differential counts holomorphic curves in the symplectization of a contact manifold.  Its relative version, Legendrian contact homology, was introduced by 
Chekanov \cite{chekanov:inv} and 
Eliashberg \cite{eliashberg:icm}, \label{rep:classic} and 
has been developed by Ekholm-Etnyre-Sullivan \cite{ees:lch, ees:orient, ees:leg} for the contactization of Liouville manifolds.  This is 
the first of a sequence of papers in which we develop
a version of Legendrian contact homology in the case that the contact manifold fibers over a monotone symplectic manifold, extending work of Sabloff \cite{sabloff}, Asplund \cite{asplund} and Dimitroglou-Rizell-Golovko \cite{dr:bs}. See also recent developments \cite{KPS, Petr}.  In particular, 
we give a definition of augmentation varieties of Legendrians \label{rep:legen} such as the Legendrian lift of a Clifford torus in any dimension.  

Our definition of the Legendrian contact homology is related to the work 
\cite{vafaetal} cited above.  An important theme in \cite{vafaetal} is the relation between {\em  Lagrangian fillings} of a Legendrian and {\em  augmentations} of its Chekanov-Eliashberg algebra.   \label{rep:filling}
The standard definition of a Lagrangian filling assumes
 a cylindrical end 
modeled on the product of the real line with a Legendrian.    However, \label{rep:important}
in important examples such as the Harvey-Lawson filling considered here, the Lagrangian is only asymptotic to a cylinder over a Legendrian, rather than equal to it outside a compact set. Moreover, the Harvey-Lawson filling is not an exact Lagrangian submanifold, which means it may bound non-trivial holomorphic disks. Therefore,  \cite{vafaetal}
suggests that in order for such Lagrangian fillings to define augmentations of Chekanov-Eliashberg differential graded algebra,  one should count configurations of holomorphic disks connected by  gradient trajectories of certain vector fields. Such configurations are otherwise known as treed disks, clusters (in the language of Cornea-Lalonde 
\cite{cornea:cluster}), or pearly trajectories (in the language of Biran-Cornea \cite{biran-cornea:quantumstruc}).  By counting such configurations, we give a construction of the Chekanov-Eliashberg algebra in which the zeroes of the corresponding vector fields are generators,  \label{rep:vfields}
similar to the definition of immersed Lagrangian Floer homology
developed by Akaho-Joyce \cite{akaho}.  The resulting version of the Chekanov-Eliashberg algebra gives rich Legendrian isotopy invariants which distinguish many Legendrian lifts of Lagrangians in monotone symplectic manifolds such as Vianna's exotic tori \cite{vianna:inf}; the non-isotopy of these tori was conjectured by Dimitroglou-Rizell-Golovko \cite{dr:bs}.   

The use of the treed disk approach resolves an obstruction to fillings giving augmentations, as already noted in   \cite{vafaetal}. Namely, even in simple examples of Lagrangian fillings, one has disk bubbling in the one-dimensional moduli spaces of holomorphic disks bounding the filling.  In the treed disk approach, the moduli space ``continues"  after the formation of a disk-bubble as a moduli space of configurations where two disks are separated by a gradient line, so that the true boundary (in the unobstructed case) consists of configurations with a broken trajectory. In this broken configuration, there are two levels, in the sense of symplectic field theory, meaning that this configuration is again a contribution to differential of the map associated to the filling.  

In this first paper in the series, we construct the moduli spaces of treed holomorphic disks with punctures, and prove results about their compactness, regularization and orientations. In sequels \cite{BCSW2, BCSW3}, these moduli spaces will be used to perform the following tasks: \label{rep:tasks}
\begin{itemize}\label{item1}
    \item Define the Chekanov-Eliashberg algebra, and show the resulting Legendrian contact homology is independent of various choices made.
    \item \label{rep:tame} Define a chain map between the Chekanov-Eliashberg algebras for a tame Lagrangian cobordism between Legendrians (see Definition \ref{def:tamepair}).
    \item Define an augmentation variety for a Legendrian and show it is a Legendrian isotopy invariant.
    \item Establish a concrete relationship between the disk potential functions of certain monotone Lagrangians such as Vianna's tori and the augmentation varieties of their Legendrian lifts,  thus distinguishing infinitely many Legendrians that are mutually not Legendrian isotopic.
    \item Compute the augmentation varieties for the Clifford-type and Hopf-type Legendrians, and the augmentation variety in particular for disconnected Legendrians, in relation to 
    the conjectures of \cite{vafaetal} on the parametrization of components by partitions.
    \end{itemize}

Now we will describe our geometric setup and moduli spaces. Consider a Legendrian $\Lambda$ in a compact contact manifold $Z$ that is a circle-fibration over a symplectic base $Y$ as in Theorem \ref{def:circle}. In particular, we have the following example in mind:  $Z$ is the unit circle bundle of a complex line bundle over a Fano projective toric variety $Y$ with a connection whose curvature is the symplectic form on $Y$, and $\Lambda$ is a horizontal lift \label{rep:horlift} (with respect to the connection) of a Lagrangian in the base. One fundamental example, as in Dimitroglou-Rizell-Golovko \cite{dr:bs}, is a horizontal lift of the Clifford torus
\begin{equation} \label{picliff} 
\Pi_{\Cliff} = \Set{ [ z_1 : \ldots : z_n ] \ | \ |z_i|^2 = |z_j|^2 \text{ for all } i,j } \cong (S^1)^{n-1}
\subset \C P^{n-1} 
\end{equation}
in complex projective space $Y = \CP^{n-1}$. \label{rep:horizlift}
A horizontal lift of the Clifford torus to the unit circle bundle of the tautological line bundle is a Legendrian torus in the standard contact sphere $Z = S^{2n-1}$: 
\begin{equation} \label{cliffleg}    \Lambda_{\Cliff} = \Set{ (z_1,\ldots, z_n) \in \C^n |  \begin{array}{l}
                                                     |z_1|^2 = \ldots
                                                     = |z_n|^2 = \frac{1}{n} \\  z_1 
    \ldots z_n \in (0,\infty)  \end{array} } \cong (S^1)^{n-1} \subset 
  S^{2n-1} .\end{equation}
%
The other lifts are obtained by the $S^1$ action on $S^{2n-1}.$ The map
\[ \C^n  - \{ 0 \} \to \C P^{n-1}, \quad z \mapsto \on{span}(z) \] 
restricts an $n$-fold cover of $\Pi_{\Cliff}$.  
Similar examples are given by taking $\Pi$ to be a Lagrangian in a closed monotone symplectic manifold $Y$, $Z \to Y$ a circle bundle with connection with
rational holonomies, and $\Lambda$ 
a 
horizontal lift of $\Pi$. 
\label{rep:horlift2}

An example of an asymptotically-cylindrical Lagrangian filling is given by the Harvey-Lawson filling of the Legendrian in the previous paragraph.  Let
\[ a_1,\ldots, a_n \in \R \]
be 
real numbers with exactly two being zero.  Define as in Joyce
\cite[(37)]{joyce}
\begin{equation} \label{hl}
L_{(1)} = \Set{ (z_1,\ldots, z_n) \in \C^n | \begin{array}{l} |z_i|^2 - a_i^2
    = |z_j|^2 - a_j^2 , \forall i, j  \\
                                       z_1 \ldots z_n \in
                                       [0,\infty) \end{array} } .\end{equation} 
A picture of the moment image of this filling is shown as the line segment in Figure 
\ref{hlfig} below; the intersections with toric moment fibers are generically tori of codimension one.   Note that this filling is the inverse image of the diagonal in the symplectic quotient  of $X$ whose moment polytope is the triangle shown in Figure \ref{hlfig}.
\label{rep:hl}
To understand this {\em  Harvey-Lawson filling} better, consider the map 
\[ {\pi} : \C^n \to \C, \;\; {\pi}(z_1,\dots, z_n) = z_1z_2\dots z_n \] 
which is a symplectic fibration away from the critical locus. The symplectic horizontal bundle of the fibration ${\pi}$ is the symplectic complement of the vertical bundle. A direct computation shows that the symplectic horizontal bundle $H_z$ at a point $z\in \C^n$ is given as 

\[H_z = \text{Span}_\C \bigg \{ \bigg (\frac{z_1}{|z_1|^2}, \dots, \frac{z_n}{|z_n|^2}\bigg ) \bigg\}.\]
Consider the torus 
    \[ T_1 =\{z \in {\pi}^{-1}(1) \mid |z_1|^2 - a_1^2
    = |z_2|^2 - a_2^2 = \ldots =
                                       |z_n|^2 -a_n^2 \} .\]
Since the derivative of $|z_i|^2 - |z_j|^2$ along any vector in $H_z$ vanishes, we conclude that the parallel transport of $T_1$ along $[0,\infty)$ lies in $L_{(1)}$. Thus $L_{(1)}$ can be viewed as the union of parallel transports of $T_1$ along $[0,\infty).$

We briefly describe the moduli spaces of pseudoholomorphic curves needed for the definition of the differential associated to the Legendrian and the chain maps associated to Lagrangian cobordisms. The construction is a special case of symplectic field theory with 
Lagrangian boundary conditions, where the contact manifold is circle-fibered. 
The simplifications in this case are comparable to that of genus-zero relative Gromov-Witten theory compared to full symplectic field theory with contact boundary.  To orient the reader, we list here two technical parts of our construction:
\begin{enumerate}
    \item the Boothby-Wang contact form \cite{boothby} 
    on the circle-fibered contact manifold is of Morse-Bott non-degenerate type. We introduce a Morse-Bott model for the Chekanov-Eliashberg algebra; and 
    \item we consider more general {\em tame} Lagrangian cobordisms, which may bound non-trivial holomorphic disks, to define chain maps.
\end{enumerate}

Now we explain these two points in more detail. By a {\em  punctured disk} \label{rep:pundisk}  we mean a surface  obtained by removing a finite collection of points on the boundary of a disk.   More precisely, 
our configurations are {\em  treed punctured disks}
obtained by adding a finite collection of trajectories of some vector fields,
on both the space of Reeb chords and the Legendrian itself, as in Figure \ref{tdisk}, and to be explained
below.  In Figure \ref{tdisk}, the dotted lines represent Reeb chords, and the trajectories can either connect the Reeb chords or connect points on the Lagrangian boundary condition of the disks.  The disk shown would contribute to the coefficient of a word of length eight in the differential applied to a Reeb chord.   Note that disk components without punctures (of which one is shown in Figure \ref{treevs2}) appear 
in the configurations defining the chain maps associated to cobordisms.  \label{rep:without}
Such configurations without punctures do not occur as contributions to the differential on our Chekanov-Eliashberg algebras since non-constant holomorphic disks have at least one incoming  puncture, see Corollary \ref{onepos} below.  
In  the sequel \cite[Section 4]{BCSW2}  we assume the existence of suitable bounding \label{rep:boundingcochain}
cochains, as in the theory of \ainfty algebras, so that disk bubbling in our cobordisms does not prevent the counts from defining chain maps.  The strata corresponding to disk bubbling are in the interior of the one-dimensional moduli spaces; instead, the true boundary components arise from breaking of edges of the segments connecting the disks.  We use Cieliebak-Mohnke \cite{cm:trans} styled domain-dependent perturbations to regularize the moduli spaces.   \label{rep:explain}
 \begin{figure}[ht]\label{tdisk}
     \centering
     \scalebox{.7}{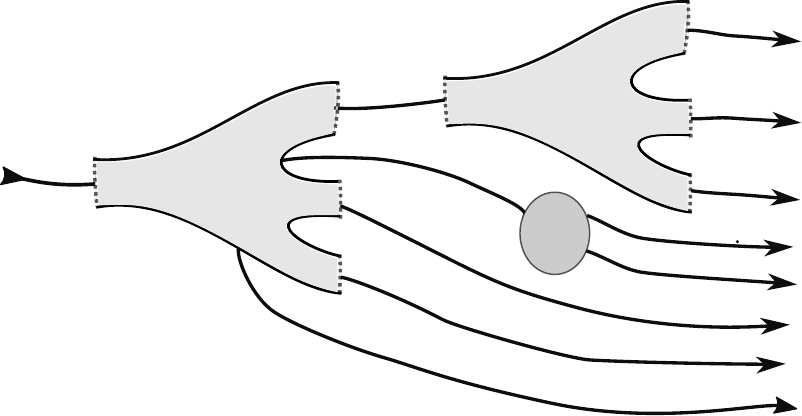}
     \caption{A treed punctured disk}
     \label{treevs2}
 \end{figure}
The chain complex $CE(\Lambda)$
is additively generated by words made of chains on the space of Reeb chords and chains on the Legendrian; the addition
of these {\em  classical Morse generators} on the Legendrian do not occur in the version of Legendrian contact
homology over exact symplectic manifolds in Ekholm-Etnyre-Sullivan \cite{ees:leg}.

The generators arising from chains on the Legendrian arise because of nodes developing 
along points in the interior of the cobordism, which then lead to trajectories as in 
Ekholm-Ng \cite{ekholmng:higher}; in particular, \cite{ekholmng:higher} assigns
augmentations to the Harvey-Lawson filling in the three-dimensional setting of knot contact homology while the construction here is intended to work in all dimensions, under some restrictions. Recall that the unit cosphere bundle of $S^3$ with respect to the round metric is a circle-fibered contact manifold over $\C P^1\times \C P^1$. In particular, the constructions here do apply to the fillings appearing in knot contact homology. It seems likely that in the cases of overlap the two theories agree, as what we develop here is essentially a Morse-Bott version of that theory, but the relationship remains to be determined.

The current paper first (Section \ref{setup}) introduces geometric constructions of  circle-fibered contact manifolds and their Legendrians. Then we set up moduli spaces of treed punctured disks solving the Cauchy-Riemann equation (Section \ref{tocob}), and prove their compactness, regularity and orientability (Section \ref{foundsec}).

\section{Fibered contact manifolds and Lagrangian cobordisms}
\label{setup}

In this section,  we describe various constructions of Legendrians in circle-fibered contact manifolds 
and Lagrangian cobordisms.    The Harvey-Lawson filling is described both as a (non-exact) asymptotic filling and as a compact filling of a Legendrian with respect to a perturbation of the standard contact structure on the sphere. 

\subsection{Fibered contact manifolds and Legendrians}

The contact manifolds we consider are circle fibrations over  compact symplectic manifolds.  Let $n$ be a positive integer and $Z$ a smooth 
manifold of dimension $2n-1$.  Let 
\[ \Omega(Z) = \bigoplus_{j=0}^{\dim(Z)} \Omega^j(Z) \] 
be the algebra of smooth differential forms. 
Recall, as in for example Geiges
\cite{geiges}, the following definitions.

\begin{definition}  Let $Z$ be a $2n-1$-manifold.  A {\em  contact form} on $Z$ is a one-form
\[ \alpha \in \Omega^1(Z), \quad ( \alpha \wedge (\d
\alpha)^{n-1})(z) \neq 0 \quad \forall z \in Z .\]

\vskip .1in \noindent 
The kernel of $\alpha$ is the {\em  contact distribution}
\[ \xi:= \ker(\alpha) \subset TZ \]  
associated to $\alpha$.  The form $\alpha$ defining $\xi$ is unique up to multiplication by a non-zero function.

For a given contact form $\alpha$ on $Z$, the {\em  Reeb vector field} 
\[ R_{\alpha} \in \Vect(Z) \] 
is the unique vector field on $Z$ defined by
\[
\d \alpha \left( R_{\alpha} \right) 
  = 0, \quad \alpha \left( R_{\alpha} \right) 
  = 1.
\]
A {\em  Reeb orbit} is a closed orbit of the Reeb vector field:
\[ \gamma: [0,T] \to Z , \quad \d\alpha \left( \cct \gamma \right) 
  = 0, \quad \alpha \left( \cct \gamma \right) 
  = 1, \quad \gamma(0)= \gamma(T). 
\]

\vskip .1in \noindent 
A {\em  Legendrian submanifold} of $Z$ is a
submanifold $\iota: \Lambda \to Z$ of dimension $n-1$ on which
$\alpha$ vanishes:
\[  \iota^* \alpha  = 0 \in \Omega^1(\Lambda) .\] 
We will assume our submanifolds are embedded, rather than immersed, unless otherwise stated.

\vskip .1in \noindent 
More generally, as in Cieliebak-Volkov \cite{cv:stable}, a {\em  stable Hamiltonian structure} on $Z$ is a pair
of a two-form and a one-form
\[ \omega \in \Omega^2(Z), \quad \alpha \in \Omega^1(Z) \] 
such that 
\[ \alpha \wedge \omega^{n-1} \neq 0, \quad \ker(\omega) \subset \ker(\d \alpha) \subset TZ .\]
A manifold with a stable Hamiltonian structure will be called a {\em  stable Hamiltonian manifold}. In particular, if  $\omega = \d \alpha$, then the pair $(Z,\alpha)$ is a contact manifold.

\vskip .1in \noindent A {\em  Legendrian submanifold} of a stable Hamiltonian
manifold is a maximally isotropic submanifold for $\omega$ on which $\alpha$
vanishes.  This ends the Definition. 
\end{definition}

The particular contact manifolds we study in this article are given by circle bundles, as in Boothby-Wang \cite{boothby}.   To set up our conventions, let $G$ be a compact Lie group (for example, 
the circle group) and $X$ a smooth manifold equipped with a left $G$-action $G \times X \to X$. 
The {\em generating vector field} for an element $\xi \in \g$ is the vector field 
\[ \xi_X: X \to TX, \quad x \mapsto \ddt |_{t = 0} \exp(- t \xi) x .\]
The sign is chosen so that the natural map from the Lie algebra $\g$ of $G$ to the space of vector fields in $X$ is a Lie algebra homomorphism.    A {\em connection one-form} is an invariant 
one-form $\kappa \in \Omega^1(X,\g)$ so that $\kappa(\xi_X) = \xi$ for all $\xi \in \g$.  The 
two-form
\[ \widetilde{ \curv}(\kappa) =  \d \kappa + [\kappa,\kappa]/2  \] 
descends to a two-form $\curv(\kappa)$ on the base $X/G$
taking values in the adjoint bundle $X(\g) := (X \times \g)/G$, where the quotient
is by the diagonal action. 
\label{rep:convent}

These conventions take the following form for circle group actions.  Let 
\[ G = S^1 = \{ e^{2\pi it}, t \in \R \} =\{ e^{i \theta}, \theta \in \R \}  \] 
be the circle group.  We identify $\R$ with the Lie algebra $\g = T_1 G$ by multiplication by $2 \pi i$, and call the image of $1 \in \g$ in $\Vect(X)$ the {\em generating vector field} for the action.    For example, let $S^{2n-1}, n > 0$, be an odd-dimensional sphere viewed as a subset of $\R^{2n} \cong \C^n$.  The generating vector field for the action of the circle group $S^1 = \{ e^{2 \pi it} \}$ on $S^{2n-1}$ by complex multiplication is 
\[ \left( 2 \pi \ppth \right)_{S^{2n-1}} = \ddt |_{t =0} e^{-2 \pi i t} (q_1,\ldots, q_n,p_1,\ldots, p_n) = 2 \pi \sum_{i=1}^n 
 p_i 
\frac{\partial}{\partial q_i} - q_i 
\frac{\partial}{\partial p_i} .\]
A connection one-form for the circle action is 
\[ \kappa = \frac{1}{2 \pi} \sum_{i=1}^n  p_i \d q_i - q_i \d p_i \in \Omega^1(S^{2n-1}) \]
in the sense that $\kappa( (2 \pi \partial_\theta)_{S^{2n-1}}) = 1$.  The two-form $\d \kappa$  descends 
to the opposite of the standard Fubini-Study form on $S^{2n-1}/S^1 = \CP^{n-1}$ normalized
so that the volume of $\CP^{n-1}$ is  the volume of the standard $(n-1)$-simplex
\[ \int_{\CP^{n-1}} \exp( \curv(- \kappa) ) = (n!)^{-1}, \quad \int_{\CP^{n-1}} \curv(-\kappa)^{n-1} = 1. \]
%
Note that the opposite one-form 
\[ \alpha = - \kappa \]
has the property that the exterior derivative is the restriction of the standard symplectic form
to $S^{2n-1}$, up to a factor of $\pi$.   We consider $\alpha$ the standard contact form on $S^{2n-1}$.
More generally, 
such contact forms exist for any integral cohomology class; a detailed proof can be found in \cite[Theorem 7.2.4]{geiges}. 

\begin{theorem}\label{def:circle}
    Let $(Y,\omega_Y)$ be a closed symplectic manifold such that $[\omega_Y] \in H^2(Y,\Z)$ is an integral cohomology class.  There exists a principle $S^1$-bundle $p:Z\to Y$ whose Euler class (i.e., first Chern class) is 
   $  - [\omega_Y] \in  H^2(Y,\Z)$
    %
    together with a connection one-form form $\kappa \in \Omega^1(Z)$ such that:
    \begin{itemize}
        \item $\alpha = - \kappa$ is a contact form on $Z$;
        \item the curvature form of $\kappa$ is $
        - \omega_Y$, that is, $\d\kappa = 
        - p^*\omega_Y$;
        \item and the generating vector field defining the principal $S^1$-action on $Z$ coincides with opposite of the Reeb vector field $R_\alpha$ of $\alpha$.
    \end{itemize}
\end{theorem}


\begin{definition}
A {\em circle-fibered contact manifold} is defined as a circle bundle with contact form $\alpha$ as in Theorem \ref{def:circle}.  A {\em circle-fibered stable Hamiltonian manifold} is a circle bundle $Z$ over a symplectic manifold $Y$ with projection $p:(Z,\alpha)\to (Y,\omega_Y)$, such that 
$(\alpha, p^*\omega_Y)$ is a stable Hamiltonian structure on $Z$.
\end{definition}

\begin{remark}
    The Reeb orbits of the Boothby-Wang contact form on a fibered contact manifold are multiple covers of the circle fibers.   Note that a circle-fibered contact manifold is a special type of circle-fibered stable Hamiltonian structure where $\d\alpha = p^* \omega_Y$.  The two-form $\omega_Y$ determines a circle bundle $Z$, and we want to allow $\alpha$ to vary; see Example \ref{cliffleg3}. 
    \end{remark}

\begin{definition}\label{def:sympl}
For a contact manifold $Z$ with a contact form $\alpha$, the
{\em symplectization of $Z$}  is the symplectic manifold
\begin{equation}
    (\R\times Z, \quad \omega:= \d(e^s\alpha)),
\end{equation}
where $s$ is the coordinate on $\R$. 
\end{definition} 

Often we work with truncations of the symplectization with convex and concave boundary components. 
The contact form $\alpha$ gives a volume form $\alpha\wedge(\d\alpha)^{n-1}$ on $Z$.  For any real numbers $\sigma
_-<\sigma_+$, the submanifold 
\[
([\sigma_-,\sigma_+]\times Z, \quad \omega= \d(e^s\alpha)) \] 
is a symplectic manifold with boundary. 
The induced orientation from the volume form matches the boundary orientation at $\{\sigma_+\}\times Z$ and is the opposite orientation at $\{\sigma_-\}\times Z$. The Liouville vector field, $\partial/\partial s$, points outward along $\{\sigma_+\}\times Z$ and points inward along $\{\sigma_-\}\times Z$. Therefore, we call $\{\sigma_+\}\times Z$ a {\em convex boundary}, and $\{\sigma_-\}\times Z$ a {\em concave boundary} of $[\sigma_-,\sigma_+] \times Z$.

\begin{lemma}\label{rep:proj}
Let $Z$ be a fibered contact manifold.
Any Legendrian submanifold $\Lambda$
in $Z$ projects to a (possibly immersed) Lagrangian
submanifold
\[ \Pi := {p}(\Lambda) \subset
Y \]  
with respect to the symplectic form $\omega_Y$.
\end{lemma}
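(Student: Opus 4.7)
The plan is to verify the two conditions defining a Lagrangian immersion: that the restricted projection is an immersion, and that the pullback of $\omega_Y$ to $\Lambda$ vanishes. Dimension-counting will then yield the maximality.

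First I would show that $p|_\Lambda : \Lambda \to Y$ is an immersion. Since $\alpha$ is a connection one-form, the generator $\partial_\theta$ of the $S^1$-action satisfies $\alpha(\partial_\theta) = 1$, so the vertical subbundle $\ker(\d p) \subset TZ$ is nowhere contained in $\ker(\alpha)$. On the other hand, the Legendrian condition $\iota^* \alpha = 0$ means $T\Lambda \subset \ker(\alpha)$. Hence $T\Lambda \cap \ker(\d p) = 0$ pointwise along $\Lambda$, so $\d p|_{T\Lambda}$ is injective and $p|_\Lambda$ is an immersion. This gives an immersed submanifold $\Pi \subset Y$ of dimension $n-1$, which is half of $\dim Y = 2n-2$.

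Next I would verify the isotropy condition. By definition of the curvature of a connection on a principal $S^1$-bundle, $\d \alpha$ is horizontal and $S^1$-invariant, so it descends to a two-form on $Y$; in the sign convention of the excerpt, $p^* \omega_Y = -p^* \curv(\alpha)$ and $p^* \curv(\alpha) = \d\alpha$ (up to the chosen sign), so in any case $p^* \omega_Y = \pm \d \alpha$. Pulling back along the Legendrian inclusion $\iota : \Lambda \hookrightarrow Z$ and commuting $\d$ with pullback gives
\[
\iota^*\, p^* \omega_Y \;=\; \pm\, \iota^* \d\alpha \;=\; \pm\, \d( \iota^* \alpha ) \;=\; 0.
\]
Since $p|_\Lambda$ is an immersion onto $\Pi$, this says $\omega_Y$ vanishes on the tangent spaces of $\Pi$; combined with the half-dimensionality above, $\Pi$ is a Lagrangian immersed submanifold.

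There is no serious obstacle here; the only mild subtlety is bookkeeping of the sign convention for $\curv(\alpha)$ (i.e.\ whether $\d\alpha = p^*\curv(\alpha)$ or its negative), but since we only need the pullback to vanish, the sign is immaterial for the conclusion. The argument shows moreover that $\Pi$ is genuinely embedded whenever $p|_\Lambda$ is injective, which is the setting of interest in the examples (e.g.\ the Clifford Legendrian $\Lambda_{\Cliff}$ in $S^{2n-1}$, where $p|_{\Lambda_{\Cliff}}$ is a single-sheeted covering of a component of the preimage, as noted in the text).
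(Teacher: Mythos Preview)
Your proof is correct and takes essentially the same approach as the paper, which invokes the constant rank theorem for the immersion step and then observes that $\iota^*\alpha=0$ forces $\iota^*\d\alpha=0$, hence $\omega_Y|_\Pi=0$. One correction to your closing aside: $p|_{\Lambda_{\Cliff}}$ is in fact an $n$-fold cover of $\Pi_{\Cliff}$, not single-sheeted, as the paper itself states just after \eqref{cliffleg}; this does not affect the argument, but your parenthetical example of an embedded projection is misplaced.
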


\begin{proof} By the constant rank theorem, $\Pi \subset Y$ is an immersed
submanifold of dimension $\dim(\Pi) = n = \dim(Y)/2$.
Since $\alpha$ vanishes on $\Lambda$, so does $\d \alpha$.  Thus the symplectic form 
$\omega_Y $ vanishes on $\Pi$ as desired.
\end{proof}

\begin{example} \label{cliffleg3}  The following example arises from truncating the Harvey-Lawson 
Lagrangian by cutting off the complement of the unit ball.  For 
\[ \eps = (\eps_1,\ldots, \eps_n)\in \R^n \] 
let 
\begin{equation} \label{eq:lambdaeps}
    \Lambda_\eps := \Set{\lvert z_1\rvert^2 + \eps_1 = \ldots = \lvert z_n\rvert^2 +\eps_n,  \quad  z_1 \ldots z_n \in
(0,\infty) }  \cap S^{2n-1} . \end{equation}
The subset $\Lambda_\eps$ is a Legendrian submanifold for the stable Hamiltonian structure
$(\alpha_\eps ,p^* \omega_Y)$ defined by a one-form $\alpha_\eps$ on $S^{2n-1} \to \CP^{n-1}$ that is trivial on $\Lambda_\eps$.
For example, one could take 
\[ \alpha_\eps := \frac{1}{2\pi ( 1 + \sum_{i=1}^n \eps_i)} \sum_{j=1}^n (r_j^2 + \eps_j ) \d \theta_j 
\in \Omega^1(S^{2n-1}) \] 
in an open neighborhood of $\Lambda_\eps$. Here we use polar-coordinates $(r_1,\theta_1,r_2,\theta_2,\dots,r_n,\theta_n)$ of $\C^n$.  Indeed, the contraction of the 
vector fields spanning the tangent space of $T \Lambda_\eps$
\[ 2 \pi \left( \frac{\partial}{\partial \theta_i} -  \frac{\partial}{\partial \theta_j} \right) \in \Vect(S^{2n-1}),
\quad i,j \in \{ 1, \ldots, n \} \]
with the one-form $\alpha_\eps$ is 
\[  \iota \left( \frac{\partial}{\partial \theta_i} -  \frac{\partial}{\partial \theta_j}
\right)
\alpha_\eps =  \frac{(r_i^2 + \eps_i )  -  (r_j^2 + \eps_j )}{1 + \sum_{i=1}^n \eps_i}  = 0 \]
for all $i,j$.  If $\eps$ is sufficiently small, then $\alpha_\eps$ is close to $\alpha$
in $C^1$ and thus $\d \alpha_\eps$ is non-degenerate.  So $(\alpha_\eps ,p^* \omega_Y)$ defines a stable Hamiltonian structure, and moreover, $\alpha_\eps$
defines a contact structure, for small $\eps$, whose Reeb field is the infinitesimal field of the Hopf action.
\end{example} 


A condition for a Lagrangian in the base to admit a lift to an embedded Legendrian in the circle fibration is the following condition on the subgroup generated by areas of surfaces bounding the Lagrangian.  

\begin{definition}\label{rep:boundary} Let $Z \to Y$ be a fibered contact manifold and $ \Lambda \to \Pi$ a Legendrian with projection $\Pi$. For any compact oriented surface $u: S \to Y$ with $u(\partial S)\subset\Pi$, let 
\[ A(u) = \int_S u^* \omega  \]
denote its area. Denote by 
\[ \cA(\Pi) = \lan \{ A(u), u: S \to Y,  \ u(\partial S) \subset \Pi \}  \ran \subset \R \]
the {\em  area subgroup} generated by the 
areas $A(u)$.
\end{definition}

\begin{proposition} \label{discrete} Suppose that $p:Z \to Y$ is a fibered contact manifold, $Y$ is simply connected and  $\Pi \subset Y$ is a  connected embedded Lagrangian submanifold with respect to the symplectic form $\omega_Y$ on $Y$.
There exists an embedded Legendrian submanifold $\Lambda$ projecting to $\Pi$
if and only if the area subgroup $  \cA(\Pi) $  is  generated by a rational number.  The projection map from such a Legendrian, $\Lambda \to \Pi$, is a covering map.
Moreover, suppose that the areas in $\cA(\Pi)$ are integer multiples of $1/k$ for some non-zero integer $k \in\Z$,  then the order of any fiber divides $k$.
\end{proposition}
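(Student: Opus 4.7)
The plan is to reduce the existence question to a holonomy computation for a flat $S^1$-connection. First I would observe that the restricted principal $S^1$-bundle $p^{-1}(\Pi)\to\Pi$ inherits the connection $\alpha|_{p^{-1}(\Pi)}$, whose curvature is $-\omega_Y|_\Pi=0$ because $\Pi$ is Lagrangian. Hence the horizontal distribution on $p^{-1}(\Pi)$ is integrable, its leaves are (a priori, immersed) coverings of $\Pi$, and any embedded Legendrian lift $\Lambda$ of $\Pi$ must be one of these leaves, since $\alpha|_\Lambda=0$ forces $\Lambda$ to be tangent to the horizontal distribution.

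Fix $y_0\in\Pi$ and $z_0\in p^{-1}(y_0)$. The flat connection determines a holonomy homomorphism $h:\pi_1(\Pi,y_0)\to S^1=\R/\Z$, and by the standard classification of flat $S^1$-bundles, the leaf through $z_0$ is an embedded closed submanifold of $Z$ if and only if $\mathrm{Im}(h)\subset S^1$ is a discrete (equivalently, finite) subgroup; when this holds, the leaf is a connected covering of $\Pi$ of degree $|\mathrm{Im}(h)|$.

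The main geometric step is to identify $h$ with area integrals. Given a loop $\gamma\subset\Pi$ based at $y_0$, use the hypothesis that $Y$ is simply connected to bound it by some $u:S\to Y$ with $u(\partial S)=\gamma$. Construct a smooth lift $\tilde u:S\to Z$ that is horizontal along radial rays from an interior point of $S$, and apply Stokes:
\begin{equation*}
\int_S \tilde u^{*}\,d\alpha \;=\; \oint_{\partial S}\tilde u^{*}\alpha .
\end{equation*}
Using $d\alpha=-p^{*}\omega_Y$ together with the normalization $\alpha(\partial_\theta)=1$ and $\partial_\theta$ of period $1$, this yields $h([\gamma])\equiv A(u)\pmod{\Z}$. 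The value is independent of $u$ modulo $\Z$ because two bounding surfaces differ by a closed surface, whose area lies in $\mathcal{A}(Y)\subseteq\Z$ (integrality of $[\omega_Y]$ being equivalent to the existence of the $S^1$-bundle $Z\to Y$). Thus $\mathrm{Im}(h)$ is precisely the image of $\mathcal{A}(\Pi)$ in $\R/\Z$.

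Combining these observations gives the stated equivalence: under the standing normalization in which $\mathcal{A}(Y)$ generates $\Z$, one has $\mathcal{A}(\Pi)\supseteq\Z$, so $\mathcal{A}(\Pi)$ is discrete in $\R$ if and only if its image in $\R/\Z$ is finite, which is exactly the criterion for an embedded Legendrian lift. For the last assertion, $\mathcal{A}(\Pi)\subseteq\tfrac{1}{k}\Z$ forces $\mathrm{Im}(h)\subseteq\tfrac{1}{k}\Z/\Z\cong\Z/k\Z$, so the cover degree $|\mathrm{Im}(h)|$ of $p:\Lambda\to\Pi$, which equals the order of any fiber, divides $k$. I expect the main obstacle to be the Stokes computation: the lift $\tilde u$ is horizontal only along radial rays, so reading off $h([\gamma])$ from the boundary integral $\oint_{\partial S}\tilde u^{*}\alpha$ as a shift in the $S^1$-fiber requires careful bookkeeping. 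Once that identification is in place, everything else is essentially formal.
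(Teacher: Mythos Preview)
Your proposal is correct and follows essentially the same route as the paper: identify Legendrian lifts with leaves of the flat horizontal distribution on $p^{-1}(\Pi)$, compute the holonomy of a loop as the area of a bounding surface via Stokes (the paper does this using a trivializing section of $u^*Z$ rather than your radial horizontal lift, but the content is identical), and then read off embeddedness and the fiber order from the holonomy image in $S^1$. If anything, your write-up is more careful than the paper's on the biconditional and on the divisibility direction in the final clause.
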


\begin{proof}   The proof is an application of the usual relationship between
the integral of the curvature and holonomy.  Let $y \in \Pi$ be a base point, 
$Z_y \subset Z$ the fiber over $y$ and $z \in Z_y$.
The restriction of $Z$ to $\Pi$ is flat and so 
\[ \ker(\alpha) \subset TZ  \] 
is an integrable distribution on $TZ|_{\Pi}$. 
Let 
\[ \Lambda \subset Z, \quad T \Lambda = \ker(\alpha) |_\Lambda  \] 
be the leaf of the associated foliation
through $z$.   
The holonomy of $\Lambda$ around any loop $\gamma $ bounding a disk
$u: D \to Y$ may be computed as follows.  
Let $\psi$ denote a trivialization  of the pull-back bundle. 
\[ \psi: D \times S^1 \to u^* Z. \] 
The trivializing section is section $\psi(\cdot, 1): D \to u^* Z$.
Then
\begin{eqnarray*} \Hol_{\Lambda}(\gamma) &=&  \exp\left({2 \pi i \int_{\partial D} \psi(\cdot, 1)^* \ti{u}^* \alpha }\right) \\
&=&  \exp \left({2 \pi i \int_{D} \d
\psi(\cdot, 1)^* \ti{u}^* \alpha} \right) \\
&=& 
 \exp\left({2 \pi i \int_D \psi(\cdot, 1)^* \ti{u}^* \d \alpha}\right) \\
 &=&   \exp \left({ 2\pi i \int_D u^* \omega_Y } \right),  \end{eqnarray*}
where $\ti{u}: u^* Z \to Z$ is the natural lift.
Suppose the area of any disk $u: D \to Y$ with boundary on $\Pi$ is divisible by $k$.  
Let $\gamma : S^1 \to \Pi $ be a loop.  Since $Y$ is simply connected, $\gamma$
bounds a disk smooth disk $u: D \to Y$, and the holonomy of $\partial u$ is the exponential
of $2 \pi i$ times the area of $u$.  Since the area is divisible by $k$, the holonomy is an $k$-th root of unity by assumption.  It follows that the fiber $\Lambda_y$ contains the finite orbit $ \Z_{k} z $, so $k$ divides $|\Lambda_y|$.
\end{proof}

\begin{example}\label{ex:degree1}   The Clifford Legendrian of \eqref{cliffleg} is a finite cover of the Clifford Lagrangian  in complex projective space, as follows:  Consider the fibration $S^{2n-1} \to \CP^{n-1}$.
The symplectization of $S^{2n-1}$ may be identified  with punctured affine space 
\[ \R^n \times S^{2n-1} \cong \C^n - \{ 0 \} .\]
The Maslov index two disks bounding the Clifford torus $\Pi_{\Cliff}$
are those induced by the maps to $\C^n - \{ 0 \} $
\[ u(z) = \frac{1}{\sqrt{n}} ( 1,\ldots, 1, z,1,\ldots, 1 ) \]
and have areas $1/n$, as in 
 Cho-Oh \cite{chooh:fano}. For a Clifford Legendrian, the action $z\to \lambda z$ where $\lambda\in\C$ is an $n$th root of unity will generate the deck transformations of the covering $\Lambda_{\Cliff} \to \Pi_{\Cliff}$.
  For example, in the case $n = 2$
the disks correspond to the two hemispheres of $\CP^1 = S^2$ and
the Legendrian lift 
\[ \Lambda_{\Cliff}  = \left\{ |z_1| = |z_2| = \frac{1}{\sqrt{2}} , 
z_1 z_2 \in (0,\infty) \right\} \] 
is a double cover of the equator $\Pi_{\Cliff}$. 
\end{example}

The following are standard constructions on fibered contact/stable Hamiltonian manifolds, that is, 
circle bundles with connection.

\begin{lemma}  \label{lem:constr}
\begin{enumerate} 
\item {\rm (Unions)} Let $Z_1 \to Y_1$ and $Z_2 \to Y_2$ be circle-fibered
contact resp. stable Hamiltonian manifolds of the same dimension.  The disjoint union, written $Z_1 \sqcup Z_2 \to Y_1 \sqcup Y_2$ is also a circle-fibered
contact resp. stable Hamiltonian manifolds. If $\Lambda_1 \subset Z_1$ and $\Lambda_2 \subset Z_2$  are immersed resp. embedded Legendrian
submanifold then the disjoint union $\Lambda_1 \sqcup \Lambda_2 $
is an immersed resp. embedded Legendrian submanifold of $Z_1 \sqcup 
\label{rep:sqcup}
Z_2$.
\item {\rm (Exterior tensor products)} If $Z_1 \to Y_1$ and $Z_2 \to Y_2$ are circle-fibered
contact/stable Hamiltonian manifolds then so is the exterior tensor product 
\[ (Z_1 \boxtimes Z_2) = (Z_1 \times Z_2)/S^1 \to Y_1 \times Y_2 \] 
where the $S^1$-action is the anti-diagonal action. If $\Lambda_1 \subset Z_1$ and $\Lambda_2 \subset Z_2$ are immersed Legendrian submanifolds then the image 
\[ \Lambda_1 \boxtimes \Lambda_2 = (\Lambda_1 \times \Lambda_2)/S^1  \subset Z_1 \boxtimes Z_2 \]
of $\Lambda_1 \times \Lambda_2$ in $Z_1 \boxtimes Z_2$ is also an immersed Legendrian submanifold.  If each projection $\pi_j | \Lambda_j, j = 1,2$ is an embedding, so that in particular $\Lambda_j$ is embedded, then $\Lambda_1 \boxtimes \Lambda_2$ is also embedded.
\item {\rm (Finite covers)}  Let $\Z_m \subset S^1$ denote the finite subgroup 
of order $m$.  If $Z \to Y$ is a circle-fibered contact/stable Hamiltonian manifold then so is the quotient
$Z/\Z_m \to Y$.  Note that the transition maps of $Z/\Z_m$ are the $m$-th powers of those of $Z$ and so the Chern classes $c_1(Z/\Z_m)$ and $c_1(Z)$ in $H^2(Y)$ are related by 
\[  c_1(Z/\Z_m) = m c_1(Z) . \] 
If $\Lambda \subset Z$ is an immersed Legendrian submanifold then the image $\ul{\Lambda}$ of $\Lambda$
in $Z/\Z_m$ is also an immersed Legendrian submanifold.  
\item {\rm (Tensor products)} 
If $Z_1 \to Y$ and $Z_2 \to Y$ are circle-fibered
contact/stable Hamiltonian manifolds  
\[ Z_1 \otimes Z_2 := \Delta^* ( Z_1 \boxtimes Z_2 \to Y \times Y) \] 
where $\Delta : Y \to Y \times Y $ is the diagonal,  is a circle-fibered
contact manifold over $Y$.   As a special case, 
if $Z \to Y$ is a circle-fibered contact/stable Hamiltonian 
manifold then so is the $k$-fold tensor product 
\[ Z^{\otimes k} := (Z \times_Y \ldots \times_Y Z)/(S^1)^{k-1} . \]
\item {\rm (Symplectic quotients)} 
Let $Z \to Y$ be a circle-fibered contact/stable Hamiltonian
manifold with an action of a Lie group $H$
preserving the contact form $\alpha$ and commuting
with the action of $S^1$. 
Let $\h$ be the Lie algebra of $H$ and $\h^\dual$
its dual.  The {\em  moment map} for the action 
of $H$ is the map 
\[ \Phi: Z \to \h^\dual, \quad z \mapsto ( \xi \mapsto \alpha(\xi_Z(z))) .\]
The map 
$\Phi$ is the pull-back of a moment map $\ol{\Phi}: Y \to \h^\dual$ for the action of $H$ on $Y$.   The symplectic quotient of $Z$ is 
\[ Z \qu H = \Phinv(0) / H .\]
Suppose $H$ acts properly and freely on $\ol{\Phi}^{-1}(0)$.  Then $Z \qu H$ is a circle-fibered contact manifold over 
\[ Y \qu H := \ol{\Phi}^{-1}(0)/H . \] 
Let $\Lambda \subset Z$ be an $H$-invariant
immersed resp. embedded Legendrian submanifold.  Then $\Lambda$ is contained in $\Phinv(0)$ and $\Lambda/H$ is an immersed resp. embedded Legendrian submanifold of $Z \qu H$.   
\end{enumerate}
\end{lemma}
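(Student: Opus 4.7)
My plan is to treat each of the five parts in turn, following a common template: identify the new circle bundle, exhibit a candidate connection one-form on its total space, verify that it descends correctly, and check that the contact (respectively stable Hamiltonian) condition and the Legendrian property are preserved. Parts (1), (3) and (4) I expect to reduce to routine checks: for (1) take the disjoint union of the connection forms; for (3) observe that $m\alpha$ is $\Z_m$-invariant and descends to a connection one-form on $Z/\Z_m$, with the rescaling by $m$ needed because the new fiber is covered $m$-fold by that of $Z$, and the Chern class formula then follows from the fact that the transition cocycle of $Z/\Z_m$ is the $m$-th power of that of $Z$; and (4) is the pullback of (2) along the diagonal $\Delta : Y \to Y \times Y$. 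In each case the Legendrian conditions should follow by descent.

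The core of the argument will be part (2). On $Z_1 \times Z_2$ I would consider $\beta := (\alpha_1 + \alpha_2)/2$. Each $\alpha_i$ is invariant under its own circle action, so $\beta$ is invariant under the antidiagonal $S^1$-action generated by $\partial_{\theta_1} - \partial_{\theta_2}$; moreover $\iota_{\partial_{\theta_1} - \partial_{\theta_2}}\beta = 0$, so $\beta$ descends to a one-form $\bar\beta$ on $Z_1 \boxtimes Z_2$. The descended form pairs to $1$ with the residual diagonal generator $\partial_{\theta_1} + \partial_{\theta_2}$ and thus defines a connection. Its curvature is a nonzero multiple of the natural symplectic form on $Y_1 \times Y_2$, giving the contact/stable Hamiltonian condition. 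For the Legendrian claim, the key geometric point is that $\partial_{\theta_1} - \partial_{\theta_2}$ is nowhere tangent to $\Lambda_1 \times \Lambda_2$: if it were tangent at a point, pairing with $\alpha_1 - \alpha_2$ would give $2 \neq 0$, contradicting $\alpha_i |_{\Lambda_i} = 0$. Hence the map $\Lambda_1 \times \Lambda_2 \to Z_1 \boxtimes Z_2$ has full rank $(n_1-1)+(n_2-1)$, the image has the correct Legendrian dimension, and $\bar\beta$ vanishes on it. Under the embedding hypothesis, the projection $\Lambda_1 \boxtimes \Lambda_2 \to \Pi_1 \times \Pi_2$ factors through an embedding.

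Part (5) I plan to handle via the standard Marsden--Weinstein reduction, adapted to the contact/stable Hamiltonian setting. The map $\Phi(z)(\xi) = \alpha_z(\xi_Z(z))$ is $S^1$-invariant because both $\alpha$ and $\xi_Z$ are (the $H$- and $S^1$-actions commute), so $\Phi = p^*\bar\Phi$ for a well-defined $\bar\Phi : Y \to \h^\dual$. Using Cartan's formula together with $H$-invariance $\mathcal{L}_{\xi_Z}\alpha = 0$, one gets
\[ \d(\iota_{\xi_Z}\alpha) = -\iota_{\xi_Z}\d\alpha = p^*(\iota_{\xi_Y}\omega_Y), \]
so $\bar\Phi$ satisfies the moment map equation on $Y$. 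Standard reduction then shows $Y \qu H$ is symplectic; since $\Phinv(0) = p^{-1}(\bar\Phi^{-1}(0))$ and the $H$- and $S^1$-actions commute, $Z \qu H = \Phinv(0)/H \to \bar\Phi^{-1}(0)/H = Y \qu H$ is a circle bundle whose connection form, descended from $\alpha$, retains the contact/stable Hamiltonian property. For the Legendrian claim, $H$-invariance of $\Lambda$ forces $\xi_Z$ to be tangent to $\Lambda$ for each $\xi \in \h$, so $\Phi|_\Lambda = \alpha(\xi_Z)|_\Lambda = 0$ gives $\Lambda \subset \Phinv(0)$; the quotient $\Lambda/H$ has the correct dimension and the descended connection form vanishes on it.

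The main technical obstacle I anticipate lies in (2): smoothness and correct dimension of $\Lambda_1 \boxtimes \Lambda_2$ are not formal consequences and genuinely rely on the Legendrian condition $\alpha_i |_{\Lambda_i} = 0$ to preclude tangency of the antidiagonal orbit direction to $\Lambda_1 \times \Lambda_2$. Everywhere else the arguments should be direct descent or routine symplectic reduction.
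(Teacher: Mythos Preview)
Your proposal is correct and follows essentially the same approach as the paper. The paper's proof is extremely terse: it declares ``most of these claims are immediate and left to the reader'' and only sketches part (2), observing that $\pi_1^*\alpha_1 + \pi_2^*\alpha_2$ is basic for the anti-diagonal action, descends, is contact, and vanishes on $\Lambda_1 \times \Lambda_2$. Your treatment is considerably more detailed---in particular, your transversality argument for the immersion claim in (2) and your Marsden--Weinstein argument in (5) fill in points the paper simply omits. One small remark: you normalize with $(\alpha_1+\alpha_2)/2$ so that the descended form pairs to $1$ with the diagonal generator, whereas the paper writes $\alpha_1+\alpha_2$ without the factor; your normalization is the one consistent with the paper's own definition of connection one-form.
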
 
\label{rep:proper}

\begin{proof} Most of these claims are immediate and left to the reader. 
In the case of products, suppose that the connection forms
are $\alpha_1 \in \Omega^1(Z_1)$ and 
$\alpha_2 \in \Omega^1(Z_2)$.  The one-form 
\[ \pi_1 \alpha_1^* + \pi_2 \alpha_2^* \in \Omega^1(Z_1 \times Z_2) \]
is basic (that is, invariant and vanishes on the vertical subspace) for the anti-diagonal $S^1$ action.  Thus it descends to a one-form on $Z_1 \boxtimes Z_2$.  An easy check shows that this one-form is contact.  Since  $\pi_1 \alpha_1^* + \pi_2 \alpha_2^* $
vanishes on $\Lambda_1 \times \Lambda_2$, its image $\Lambda_1 \boxtimes \Lambda_2$ is Legendrian. 
\end{proof}

\begin{example}\label{ex:degree2} Continuing Example \ref{ex:degree1}, let $\Pi = \Pi_{\Cliff}$ be the Clifford Lagrangian in $\CP^{n-1}$.
Let  $Z \to \CP^{n-1}$ be the unit canonical bundle, that is, 
the set of unit vectors in the total space of the line bundle 
\[ \mO(-n) = \Lambda_\C^{n-1} T \CP^{n-1}  \] 
with respect to the metric induced
by the Fubini-Study metric on $\CP^{n-1}$.  Any horizontal lift $\Lambda$
\label{rep:horlift3} is diffeomorphic to $\Pi$ under the projection; the horizontal
lift $\Lambda \to \Pi$ is defined by setting
\[ s(y) \in \Lambda^{n-1}_\R T_y \Pi 
\cong \Lambda^{n-1}_\C T_y Y \] 
the unique unit volume form defining the orientation on $\Pi$.

\end{example}

Now we analyze the space of Reeb chords for the circle-fibered case we are considering. 
A {\em  Reeb chord} with boundary on a Legendrian $\Lambda$ is a path starting and ending at the Legendrian:
  \[ \gamma: [0,T] \to Z , \quad \d\alpha \left( \cct \gamma \right) 
  = 0, \quad \alpha \left( \cct \gamma \right) 
  = 1, \quad \gamma( \{ 0, T \}) \subset \Lambda . \]
The number $T$ is the {\em  action} of the Reeb chord. Recall our convention for the action of a simple Reeb orbit is $1$. Therefore, the action of a Reeb chord is simply the change in angle, divided by $2\pi$, in the fiber. Denote by ${\cR}(\Lambda)$ the set of Reeb chords:
\[ 
{\cR}(\Lambda) = \Set{ \gamma: [0,T]
\to Z_q | \; T>0, \;q\in \Pi, \; 
\alpha \left( \cct \gamma \right) 
  = 1, \ \gamma( \{ 0, T \}) \subset \Lambda }   . \]
Denote by $\cR(Z)$ the space of Reeb orbits, 
the space of maps $\gamma: [0,T] \to Z_q$
with $\alpha \left( \cct \gamma \right) 
  = 1$ and $\gamma(0) = \gamma(T)$.

\begin{lemma} \label{startend} Suppose that as above $\Lambda \subset Z$ is a compact Legendrian submanifold
whose image $\Pi \subset Y$ is an embedded Lagrangian submanifold.  There is a diffeomorphism 
${\cR}(\Lambda) \to (\Lambda \times_\Pi \Lambda) \times \Z_{\ge 0 }$.
\end{lemma}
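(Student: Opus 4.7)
The plan is to pin down the Reeb vector field explicitly and then exploit the resulting rigid structure of chords. Because $\alpha$ is a connection one-form on the principal $S^1$-bundle $p : Z \to Y$, the fundamental vector field $\partial_\theta$ of the $S^1$-action satisfies $\alpha(\partial_\theta) = 1$ by definition, while its Lie derivative along $\partial_\theta$ vanishes by invariance. Cartan's formula then gives $\iota_{\partial_\theta}\d\alpha = \mathcal{L}_{\partial_\theta}\alpha - \d(\iota_{\partial_\theta}\alpha) = 0$. These are precisely the equations defining the Reeb vector field, hence $R_\alpha = \partial_\theta$ and its flow $\phi_t$ is the principal $S^1$-action, of minimal period one.

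Next I would show that $p|_\Lambda : \Lambda \to \Pi$ is a finite covering map. The Legendrian condition forces $T\Lambda \subset \ker \alpha$, and $dp$ restricts to a fiberwise isomorphism from $\ker\alpha$ onto $TY$, so $p|_\Lambda$ is a local diffeomorphism; compactness of $\Lambda$ together with the assumption that $\Pi$ is embedded upgrades this to a finite covering. In particular, for each $y \in \Pi$ the preimage $\Lambda_y := \Lambda \cap p^{-1}(y)$ is a finite subset of the fiber circle $p^{-1}(y)$.

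Since every Reeb chord $\gamma : [0,T] \to Z$ is an arc in a single fiber with both endpoints in $\Lambda$, evaluation at $0$ and $T$ gives a well-defined map $\cR(\Lambda) \to \Lambda \times_\Pi \Lambda$, $\gamma \mapsto (\gamma(0), \gamma(T))$. To parametrize the fibers of this map by $\Z_{\geq 0}$, I would decompose $\cR(\Lambda) = \bigsqcup_{k \geq 0} \cR(\Lambda)_k$ according to the \emph{wrap number} $k$, defined as the unique integer with $T - k \in (0, 1]$. For each $(z_0, z_1) \in \Lambda \times_\Pi \Lambda$ there is a unique $s \in (0, 1]$ with $\phi_s(z_0) = z_1$ (with $s = 1$ in the diagonal case $z_0 = z_1$), so there is a unique chord in $\cR(\Lambda)_k$ with the prescribed endpoints, namely the one of action $k + s$, and this exhibits the bijection $\cR(\Lambda) \cong (\Lambda \times_\Pi \Lambda) \times \Z_{\geq 0}$.

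The main point requiring care, and the reason for fixing the half-open interval $(0,1]$ together with a discrete wrap number, is that the angular-gap function $s(z_0, z_1)$ is discontinuous across the diagonal of $\Lambda \times_\Pi \Lambda$, tending to $0$ from one side and to $1$ from the other. However, the action $T$ is a continuous function on $\cR(\Lambda)$ and pins down $k$ uniquely, so within each component $\cR(\Lambda)_k$ the quantity $k + s$ varies smoothly. The restricted evaluation $\cR(\Lambda)_k \to \Lambda \times_\Pi \Lambda$ is then a diffeomorphism (smoothness of the inverse follows from smoothness of the Reeb flow), and taking the disjoint union over $k$ gives the claimed identification.
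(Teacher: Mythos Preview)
Your proof is correct and takes the same approach as the paper's (much terser) argument: the paper's ``number of times $\gamma(t)$ crosses $\gamma(0)$'' is exactly your wrap number $k$. Your final paragraph's worry is moot, however, since $p|_\Lambda : \Lambda \to \Pi$ being a finite cover of compact manifolds forces the diagonal to be a union of \emph{connected components} of $\Lambda \times_\Pi \Lambda$ (not a lower-dimensional locus one can cross), so $s$ is automatically smooth on each component and bounded away from $0$ and $1$ on the off-diagonal ones by compactness.
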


\begin{proof} Given a Reeb chord $\gamma$ of action $T$,
 the starting and ending point $\gamma(0),\gamma(T)$ of the chords together with the number of times 
 $\gamma(t)$ crosses the starting point $\gamma(0)$ as $\gamma$ winds around the fiber
$Z_{p(\gamma)}$ uniquely specifies the chord. 
\end{proof}

\begin{corollary}  If $\Pi$ is embedded then each component of $\cR(\Lambda)$
is diffeomorphic to $\Lambda$.  If $\Pi$ is immersed with transverse self-intersection then the components are either points or diffeomorphic to $\Lambda$.
\end{corollary}

\begin{proof}  First consider the case that the Lagrangian in the base is embedded.  The map 
\[ \cR(\Lambda) \to \Lambda, \gamma \mapsto \gamma(0)  \] 
taking the initial point of a chord is discrete covering map, with the fiber given by the discrete set of Reeb chords   with the same initial point.  Parallel transport of Reeb chords using the given flat connection  over $\Pi$ provides a trivialization of this discrete covering, so the covering is trivial.    In particular, each connected component of $\cR(\Lambda)$ is diffeomorphic to $\Lambda$.

Next, consider the case that the Lagrangian in the base is immersed.  There are two diffeomorphism types among the connected components of $\cR(\Lambda)$, depending on whether two  the end points $\gamma(0), \gamma(1)$ of the Reeb chord belong to horizontal lifts of two different branches of $\Lambda$ at the self intersections of $\Pi$:
If a Reeb chord $\gamma$ has end points $\gamma(0),\gamma(1)$ on lifts of different branches
of $\Pi$ then the connected component of $\cR(\Lambda)$ containing the chord $\gamma$ is the singleton set $\{ \gamma \}$, by transversality,  If Reeb chord $\gamma(0), \gamma(1)$ has end points on the same the same branch
of $\Lambda$ then the argument is similar to the case when $\Pi$ is embedded.
\end{proof}

\subsection{Lagrangian cobordisms}

In this section, we define symplectic and Lagrangian cobordisms and introduce assumptions that guarantee that the counts of pseudoholomorphic disks give rise to chain maps between the Chekanov-Eliashberg algebras.

\begin{definition}\label{sympcob}
Let $(Z_{\pm}, Y_{\pm}, \alpha_{\pm},\omega_\pm)$ be two circle-fibered stable Hamiltonian manifolds.
A {\em  symplectic cobordism} with convex end $(Z_{+}, Y_{+}, \alpha_{+},\omega_+)$ and concave end $(Z_{-}, Y_{-}, \alpha_{-},\omega_-)$ is a symplectic manifold with boundary $(\widetilde{X},\partial \widetilde{X}, \omega)$ whose boundary is equipped with a partition
\[ \partial \widetilde{X}= \partial_{+} \widetilde{X}\cup \partial_{-} \widetilde{X} \] 
and diffeomorphisms 
\[\iota_\pm : Z_\pm \to \partial_\pm \widetilde{X}  \]
so that
\begin{enumerate} 
\item the two forms satisfy the proportionality relation
\[ \iota_\pm^* \omega\mid_{\partial_{\pm} \tilde{X}}=  \lambda_\pm  \omega_\pm
\]
for some positive constants $\lambda_\pm \in \R$, and 
\item the 
isomorphism  between the normal bundle of $Z_\pm$
in $\widetilde{X}$ and the kernel of $D p: TZ_\pm \to TY_\pm$
induced by the symplectic form is orientation preserving resp. reversing for the convex resp. concave boundary. 
\end{enumerate}
Denote by $X$ the interior of $\widetilde{X}$, viewed as a manifold with cylindrical ends, and, abusing terminology, 
call $X$ the cobordism. 
\end{definition}

\begin{remark} In the definition of cobordism
the one-forms $\alpha_\pm \in \Omega^1(Z_\pm)$ are not required to extend over $X$, and will not,
in our basic example of the Harvey-Lawson filling.    For the most part, the reader could 
restrict to the case that $Z_\pm$ are contact manifolds, keeping in mind that the truncation of 
asymptotically-cylindrical manifolds such as the Harvey-Lawson filling are only Legendrian
with respect to stable Hamiltonian structures on the boundary of the truncation.
\end{remark}

\begin{remark}  Because of the assumption on the orientations, the notion of symplectic cobordism
is not symmetric. 
\end{remark}

\begin{example} \label{ex:sympl} Truncations of symplectizations are particular examples of symplectic cobordisms.
Recall the symplectization of a contact manifold with one-form $(Z, \alpha)$ is the symplectic manifold
\[
\R \times Z, \quad \omega:=  \d(e^s\alpha)
\]
where $s$ is the coordinate on $\R$.  
For real numbers $\sigma_- < \sigma_+$
The truncation  
\[ X  = (\sigma_-,\sigma_+) \times Z \] 
is a symplectic cobordism. The convex end has a larger volume and the concave end has a smaller volume. The proportional constants $\lambda_\pm$ depend on the cut levels $(\sigma_-,\sigma_+)$. Usually 
we omit these in the notation.
\end{example} 

\begin{definition}  The {\em  compactification} of a symplectic cobordism $X$ is the symplectic manifold $\ol{X}$
 obtained by  collapsing the null foliation on the boundary:
 \begin{equation} \label{equiv} \ol{X} = \widetilde{X} / \sim  \quad = (\widetilde{X} - \partial \widetilde{X}) \sqcup (\partial \widetilde{X}/S^1) \end{equation}
 where $\sim$ is the trivial equivalence relation on $\widetilde{X} - \partial \widetilde{X}$ and the equivalence relation 
 \[  z \sim e^{2\pi i \theta} z,\quad  z \in Z_\pm, \theta \in \R \]
 generated by the circle action on the boundary.  
\end{definition} 

\begin{example} \label{ex:symp2} Continuing Example \ref{ex:sympl}, the compactification of the truncated symplectization can be described as an associated bundle.  Let $X = (\sigma_-,\sigma_+)\times Z$ be the symplectization of a contact manifold $Z$.  The compactification $\ol{X}$ is diffeomorphic to the quotient of $Z \times \P^1$ by the diagonal $S^1$-action.     Indeed, 
the quotient of $Z \times (\P^1 - \{ 0, \infty \})$ is diffeomorphic 
to the quotient of $Z \times (\R \times S^1)$, where the latter is $\R \times Z$.
Adding the sections at zero and infinity produces the compactification.
\end{example}


\begin{lemma} \label{lem:trunc} Let $X$ be a symplectic cobordism with concave end $Z_-$ and convex end $Z_+$. The compactification $\ol{X}$ is a smooth symplectic manifold containing $Y_-$ and $Y_+$ as codimension-two symplectic submanifolds. 
\end{lemma}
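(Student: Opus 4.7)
The statement is local near $\partial\widetilde{X}$; I treat the positive end, the negative being symmetric with reversed fiber orientation. Because $Z_+$ is circle-fibered with $\alpha_+$ a connection one-form, the Reeb vector field of $\alpha_+$ is the vertical generator $\partial_\theta$, so the stable Hamiltonian condition gives $\iota_{\partial_\theta}\omega_+ = 0$; together with $S^1$-invariance this makes $\omega_+$ basic, descending to a closed non-degenerate $2$-form $\omega_{Y_+}$ on $Y_+$. Consequently the null foliation of $\omega|_{\partial_+\widetilde{X}} = \lambda_+\omega_+$ is exactly the $S^1$-fibration, and the equivalence relation of \eqref{equiv} quotients $\partial_+\widetilde{X}$ by an honest free circle action, identifying its image in $\ol{X}$ with $Y_+$.

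The plan is to produce a smooth chart for $\ol{X}$ near $Y_+$ in which $\omega$ extends smoothly. Regard $Z_+ \to Y_+$ as the unit circle bundle of a Hermitian line bundle $L_+ \to Y_+$, and consider its unit disk bundle $D(L_+)$. With $r$ denoting the Hermitian norm and $\alpha_+$ extended radially (as a connection form on the whole bundle), the two-form
\[ \Omega_+ \;=\; \pi^* \omega_{Y_+} + \d\!\left( f(r^2)\, \alpha_+ \right) \]
is symplectic on $D(L_+)$ and extends smoothly across the zero section $Y_+$, provided $f:[0,1]\to\R$ is smooth with $f(0)=0$, $f(1)=\lambda_+$, and $f'>0$; this is the standard local symplectic model for a codimension-two symplectic submanifold. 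Its restriction to $Z_+$ agrees with $\lambda_+\omega_+$, and its restriction to the zero section with $\lambda_+\omega_{Y_+}$.

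An equivariant Moser argument now identifies a collar of $\partial_+\widetilde{X}$ in $\widetilde{X}$ with a collar of $Z_+=\partial D(L_+)$ in $D(L_+)$: one first extends the $S^1$-action from $\partial_+\widetilde{X}$ into a tubular neighborhood via parallel transport for an $\omega$-compatible almost complex structure, then interpolates $\omega$ to $\Omega_+$ on the collar rel $\partial_+\widetilde{X}$ (where the two already agree). Gluing this chart to the interior of $\widetilde{X}$ realizes a neighborhood of $Y_+$ in $\ol{X}$ as an open subset of $D(L_+)$, exhibiting $\ol{X}$ as a smooth symplectic manifold with $Y_+$ a codimension-two symplectic submanifold carrying the restricted form $\lambda_+\omega_{Y_+}$. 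The orientation hypothesis of Definition~\ref{sympcob} ensures that the positive and negative ends collapse compatibly, corresponding to $L_+$ versus the dual $L_-^{\vee}$.

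The main technical obstacle is the equivariant Moser step: one must arrange that $\omega$ and $\Omega_+$ agree on $\partial_+\widetilde{X}$ not only as boundary restrictions but also in their first-order normal data, so that $\omega - \Omega_+$ admits a primitive vanishing on $\partial_+\widetilde{X}$ and the linear interpolation $(1-t)\omega + t\Omega_+$ stays non-degenerate throughout the collar. Once the radial coordinate, the function $f$, and the local $S^1$-action are aligned, the standard equivariant Moser trick closes the argument at both ends.
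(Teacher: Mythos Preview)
Your approach is valid but takes a different route from the paper. The paper invokes Lerman's symplectic cut construction: after using the coisotropic embedding theorem to identify a collar of $Z_\pm$ with an open set in $\R\times Z_\pm$, one considers $\R\times Z_\pm\times\C$ with the diagonal $S^1$-action and moment map $(s,z,w)\mapsto s\pm|w|^2/2$; the symplectic reduction at $0$ is exactly the quotient \eqref{equiv}, and smoothness follows immediately from Marsden--Weinstein since the action is free on the level set. This packages both the smooth structure and the symplectic form in one stroke, and the equivariant Moser step you flag as the ``main technical obstacle'' is absorbed into the coisotropic embedding theorem together with the reduction.

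Your disk-bundle model plus equivariant Moser is essentially the same argument unpacked by hand, but note two small issues with the explicit formula. First, with $f(0)=0$ the restriction of $\Omega_+$ to the zero section is $\pi^*\omega_{Y_+}$, not $\lambda_+\pi^*\omega_{Y_+}$ as you write. Second, and more substantively, $\Omega_+|_{TZ_+}=\pi^*\omega_{Y_+}+f(1)\,\d\alpha_+|_{TZ_+}$, which equals $\lambda_+\pi^*\omega_{Y_+}$ only if $\d\alpha_+$ and $\pi^*\omega_{Y_+}$ satisfy a specific linear relation; in the general stable Hamiltonian setting (as opposed to the contact case) this need not hold. You should instead take $\Omega_+=\lambda_+\pi^*\omega_{Y_+}+\d(g(r^2)\alpha_+)$ with $g(0)=0$, $g'>0$, and $g$ chosen so that the boundary restriction matches---or simply quote the symplectic cut, which sidesteps the bookkeeping entirely.
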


     The statement of the Lemma is a special case of the symplectic cut construction in Lerman \cite{le:sy}:   Given a symplectic manifold 
     $X$ with a proper Hamiltonian $S^1$-action with moment map $\Phi$, consider the product $X \times \C$
     with the diagonal $S^1$-action with moment map 
     \[ \ti{\Phi}: X \times \C \to \R, \quad ( s,z,w) \mapsto e^s \pm |w|^2/ 2 .\]
     The quotient of the zero level set of the moment map is the one-sided cut
     %
     \[   X_{\ge 0} := X \times \C \qu S^1 \cong  \Phinv(0)/S^1  \cup \Phinv( \R_{> 0})  . \]
     We will use the following notation for two-sided cuts:  Suppose that the standard action of $S^1$ on $S^2$ is equipped with a symplectic form wand moment map $\Phi_{S^2}: S^2 \to \R$ with moment image $-[a_-,a_+]$.  Define 
     \[ X_{[a_-,a_+]} = (X \times S^2) \qu S^1 
     \Phinv(a_-)/S^1  \cup \Phinv( (a_-,a_+) )  \cup \Phinv(a_+)/S^1 . \]
     If the action is free then the quotient is smooth by the Meyer-Marsden-Weinstein theorem.    

\begin{proof}[Proof of Lemma \ref{lem:trunc}]
     Let $U_\pm \subset X$ be an open neighborhood of $Z_\pm$ in $X$, which may be identified with a subset of $\R \times Z_\pm$ by the coisotropic embedding theorem.   The compactification $\ol{X}$ is obtained from $\R \times Z_\pm$   near the boundary  by taking the symplectic quotient by the given free action. 
\end{proof}

\begin{example}   
We continue our basic example from Example \ref{ex:degree1}.
    Let $p:Z= S^{2n-1}\to Y=\C P^{n-1}$ be the fibered contact manifold given by the Hopf fibration. Let $X=(\sigma_-,\sigma_+)\times Z$ be the truncated symplectization. The compactification $\ol{X}$ is a $\CP^1$-bundle over $\C P^n$. The Euler class of the normal bundle $N_{+,1}$ of $Y_+$ is the hyperplane class and the Euler class of $N_{-,1}$ is the opposite of the hyperplane class. Recall the Euler class of $Z_\pm$ is the negative hyperplane class at both convex and concave ends.
\end{example}

Conversely, closed symplectic manifolds equipped with disjoint symplectic hypersurfaces induce cobordisms, under the assumption of suitable positivity/negativity of the normal bundles.

\begin{lemma}  
Let $(\ol{X},\ol{\om})$ be a closed symplectic manifold  with two disjoint symplectic hypersurfaces $Y_{\pm}$. Let $\omega_{Y,\pm}$ be the restricted symplectic forms from $\ol{\omega}$. Consider the unit normal bundles
  \[ Z_\pm  := N_{\pm,1} \to Y_\pm .\]
Suppose that the Euler class of $N_{+,1}$ is negatively  proportional to $[\omega_{Y,+}]$ and the Euler class of $N_{-,1}$ is positively proportional to $[\omega_{Y,-}]$. Then the real blow up of $\ol{X}$ at $Y_\pm$ has the structure of a symplectic cobordism with concave end $(Z_-,\alpha_-,\lambda_-p_-^* \om_{Y,-})$ and convex end $(Z_+,\alpha_+, \lambda_+p_+^* \om_{Y_+})$ for some positive constants $\lambda_\pm$.
\end{lemma}

\begin{proof}  The real blow-up $\ti{X}$ is obtained by replacing the submanifolds $Y_\pm$ by their unit normal bundles.  There is a natural  blow-down map from $\ti{X}$ to $\ol{X}$.  The two-form $\ti{\om}$ on $\widetilde{X}$ is obtained by pull-back of the form $\ol{\om}$ on $\ol{X}$.
\end{proof}

We view the interiors of the cobordisms as manifolds with cylindrical ends.  Denote the interior of the cobordism  
\[ X = \widetilde{X} - \partial \widetilde{X} ; \]
%
recall that, abusing terminology, we also refer to $X$ as a cobordism with concave end $Z_-$ and convex end $Z_+$.  Tubular neighborhoods give rise to proper embeddings for some $\sigma_\pm > 0 $
 \[ \kappa_{\pm}:  \pm (0,\sigma_\pm) \times Z_\pm \to X .\] 
Equip $ \mp [0,1] \times Z_\pm$ with the two form 
\[ \omega_\pm := p_{Y_\pm}^* \omega_{Y_{\pm}} + \d ( s \alpha_\pm) .  \] 

\begin{lemma}  Possibly after shrinking $\sigma_\pm$, the restrictions of $\omega_\pm$ to $ \mp [0,\sigma_\pm] \times Z_\pm$  are symplectic and the maps $\kappa_{\pm}$ may be taken to be symplectomorphisms onto open subsets of $X$.
\end{lemma}

\begin{proof} The local model for constant rank embeddings in Marle \cite{marle} implies the existence of the claimed symplectomorphism (since the two forms, by definition, agree at $Z_\pm \times \{ 0 \}$).
    \end{proof}
    
\begin{definition} \label{def:lagcob}
For a symplectic cobordism $X$, with convex end $Z_{+}$ and concave end $Z_{-}$ so that $X$ is equipped with cylindrical ends, and two Legendrian submanifolds $\Lambda_{\pm}\subset Z_{\pm}$, a {\em  Lagrangian cobordism cylindrical near infinity} with convex end $\Lambda_{+}$ and concave end $\Lambda_{-}$ is a Lagrangian submanifold $L\subset X$ such that
\[ L\cap (\mp (0,\sigma_\pm)\times Z_\pm)= \mp (0,\sigma_\pm) \times \Lambda_\pm \]
for some intervals $\mp (0,\sigma_\pm) \subset \R$.
We call the pair $(X,L)$ a {\em  cobordism pair}.
\end{definition}

Given a Lagrangian cobordism that is cylindrical near infinity, we form a (singular) Lagrangian in the compactified symplectic cobordism
by taking closure.

\begin{definition}\label{def:clean}
     An immersed submanifold $\iota : K \to \ol{X}$ is  {\em  cleanly self-intersecting} \label{rep:cleanly} if the fiber product $K \times_{\iota,\iota} K \subset K \times K$ has the structure of a smooth submanifold so that at any points $k_1,k_2 \in K$ with 
$  \iota(k_1) = \iota(k_2)  = x $ 
\[ D \iota (T_{k_1} K )
\cap D \iota (T_{k_2}  K)  =   D\iota( T_{(k_1,k_2)}  ( K \times_{\iota,\iota} K) )  .\]
\end{definition}

Let $L \subset X$ be a Lagrangian cobordism and $\ol{L} \subset \ol{X}$ its closure.

\begin{proposition} \label{localmodel} \label{lem:pin} Let $(X,L)$ be a cobordism pair with concave end $(Z_-,\Lambda_-)$ and convex end $(Z_+,\Lambda_+)$ in the sense of Definition \ref{def:lagcob}.  There exists a manifold $\tilde{L}$ with boundary and an immersion $\phi: \tilde{L} \to \ol{X}$ with clean self-intersection so that $\phi(\phi^{-1}(X)) = L$.
\end{proposition}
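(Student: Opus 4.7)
The plan is to build $\overline{L}$ as an abstract smooth manifold with boundary $\Lambda_{+}\sqcup\Lambda_{-}$, and to define $\phi$ so that it agrees with the inclusion $L\hookrightarrow X\subset \overline{X}$ on the interior and with the projection $\Lambda_{\pm}\to \Pi_{\pm}\subset Y_{\pm}$ on the boundary. The key ingredient is the disk-bundle model for $\overline{X}$ near $Y_{\pm}$ produced by the symplectic cut of the preceding lemma: by the coisotropic embedding theorem a neighborhood of $Y_{\pm}\subset \overline{X}$ is identified with an open neighborhood of the zero section of the disk bundle $N_{\pm,1}$, and under this identification the cylindrical end $(\sigma_{\pm,0},\sigma_{\pm,1})\times Z_{\pm}\hookrightarrow X$ becomes a punctured disk bundle via $(s,z)\mapsto (p_{\pm}(z),\rho(s)\,z)$, where $z$ is regarded as a unit element of the fiber of $N_{\pm,1}$ over $p_{\pm}(z)$ and $\rho$ is smooth with $\rho(\sigma_{\pm,0})=0$ and $\rho'(\sigma_{\pm,0})\neq 0$.

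To construct $\overline{L}$, I extend the cylindrical end $(\sigma_{\pm,0},\sigma_{\pm,1})\times \Lambda_{\pm}\subset L$ to the half-open cylinder $[\sigma_{\pm,0},\sigma_{\pm,1})\times \Lambda_{\pm}$ by adjoining $\{\sigma_{\pm,0}\}\times \Lambda_{\pm}$; the resulting space is a smooth $n$-manifold with boundary $\Lambda_{+}\sqcup\Lambda_{-}$ whose interior recovers $L$. I define $\phi$ as the inclusion on the interior and in the disk-bundle chart of the end as $(s,\lambda)\mapsto (p_{\pm}(\lambda),\rho(s)\,\lambda)$; at the boundary $s=\sigma_{\pm,0}$ this reads $\lambda\mapsto p_{\pm}(\lambda)\in \Pi_{\pm}\subset Y_{\pm}\subset \overline{X}$. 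The map is manifestly smooth across $s=\sigma_{\pm,0}$, and the differential at a boundary point is injective: $Dp_{\pm}|_{T\Lambda_{\pm}}$ is injective since $\Lambda_{\pm}\to \Pi_{\pm}$ is a local diffeomorphism, and the $s$-derivative $\rho'(\sigma_{\pm,0})\lambda$ is a nonzero vector in the fiber direction of $N_{\pm,1}$ transverse to $TY_{\pm}$. By construction $\phi^{-1}(X)$ equals the interior of $\overline{L}$ (since $\partial\overline{L}$ maps into $Y_{\pm}\subset \overline{X}\setminus X$), on which $\phi$ is a diffeomorphism onto $L$, giving $\phi(\phi^{-1}(X))=L$.

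For the clean self-intersection property, all double points of $\phi$ lie in $\Pi_{+}\cup \Pi_{-}$: two points of $\overline{L}$ with the same image under $\phi$ must both be boundary points. Given distinct $\lambda_{1},\lambda_{2}\in \Lambda_{\pm}$ with $p_{\pm}(\lambda_{1})=p_{\pm}(\lambda_{2})=y$, the images $D\phi(T_{\lambda_{i}}\overline{L})=T_{y}\Pi_{\pm}\oplus \R\cdot \lambda_{i}$ inside $T_{y}Y_{\pm}\oplus N_{\pm,1}|_{y}$ agree on $T_{y}\Pi_{\pm}$ and differ along the fiber because $\lambda_{1}\neq \lambda_{2}$ as unit fiber elements. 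The fiber product $\overline{L}\times_{\phi,\phi}\overline{L}$ is then smoothly identified, off the diagonal, with $\Lambda_{+}\times_{p_{+}}\Lambda_{+}\sqcup \Lambda_{-}\times_{p_{-}}\Lambda_{-}$, and the tangent-space equality required for cleanness follows. The main obstacle is matching the local disk-bundle picture near $Y_{\pm}$ with the cylindrical structure of $L$ in a coherent way; this is exactly what the coisotropic embedding theorem invoked in the preceding lemma delivers, making the verification routine.
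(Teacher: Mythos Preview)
Your proof is correct and follows essentially the same route as the paper's: both identify a neighborhood of $Y_\pm$ in $\ol{X}$ with the normal disk bundle and observe that, under this identification, the cylindrical end of $L$ becomes a union of half-rays in the fibers over $\Pi_\pm$. The paper is terser---it records the local model $\ol{L}\cap U \cong \R_+ e^{2\pi i\theta_1}\cup\ldots\cup \R_+ e^{2\pi i\theta_k}$ and embeds $\ol{L}$ inside a boundaryless cleanly-self-intersecting Lagrangian $\hat{L}$ obtained by replacing each half-ray $\R_+ e^{2\pi i\theta_j}$ by the full line $\R e^{2\pi i\theta_j}$---while you construct $\ol{L}$ and $\phi$ directly and verify the immersion and clean-intersection conditions by hand; your explicit check that $D\phi(T_{\lambda_i}\ol{L}) = T_y\Pi_\pm \oplus \R\lambda_i$ is exactly the content of the paper's local picture.
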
 

\begin{proof} To simplify notation we assume that $Z_-=\emptyset$ and write $Z = Z_+, Y = Y_+$; the proof for the case $Z_- \neq \emptyset$ is identical. The proof is based on taking a real-oriented blow-up of $Y$ in $\ol X$ and compactifying the Lagrangian $L$ inside the blow-up to get $\tilde L$.
Suppose that $\Lambda$ intersects some fiber
  $Z_y\cong S^1$ over $y \in Y$ in a subset
  $\Lambda \cap Z_y = \{ \theta_1,\ldots, \theta_k  \} . $
If we identify $Z_y \times_{\C^\times} \C \cong \C$ then 
the intersection of $L$ with a neighborhood $U_y \subset X$ of 
$y \in Y$ is given in the local model by 
\[
  \R_+ e^{2\pi i \theta_1} \cup \ldots \cup \R_+  e^{2\pi i \theta_k}  
\subset \C \cong 
 Z_y \times_{\C^\times} \C  .\] 
The union 
 \[ \hat{L}_y = (-\eps, \eps) e^{2\pi i \theta_1} \cup \ldots \cup (-\eps, \eps) e^{2\pi i \theta_k}  \]
 for $\eps$ small is a non-compact,
 cleanly self-intersecting Lagrangian containing $L \cap U_y$.
 Let $\hat{L}$ be the union of the local models 
 $\hat{L}_y, y \in Y$ with $L$.   Notice that  the local presentation above implies that the closure of $L$ in the real-oriented blow-up of $Y$ is a manifold $\tilde{L}$ with boundary. 
\end{proof}

\begin{remark} 
In particular, Proposition \ref{localmodel} implies that  $\ol{L}$ is a closed subset of a cleanly self-intersecting Lagrangian 
$\hat{L} \to \ol{X}$. 
\end{remark}

\begin{definition}\label{def:branchedLag}  Let $\ol{X}$ be a compact symplectic manifold.  
  A compact subset $\ol{L} \subset \ol{X}$ has a {\em  branch point} at $p \in \ol{L}$
  if there exists a Darboux chart 
\[ \phi:U \to (\C^n,\omega_0) \] 
where $U$ is an open neighborhood of $p$ and $\phi$ is a symplectomorphism sending $p$ to $0$, such that $\phi(\ol{L})$ is the union of a collection of disjoint open subsets of a finite collection 
\[ W_1,\dots, W_k \subset \C^n \] 
of  Lagrangian subspaces of $(\C^n,\omega_0)$ and an isotropic subspace $W_\cap \subset \C^n$ such that $\cap_i W_i = W_\cap$.  A subset $\ol{L} \subset \ol{X}$ is a {\em  branched Lagrangian} if for any $p \in \ol{L}$, either $\ol{L}$ is a smooth Lagrangian in an open neighborhood of $p$ or $p$ is a branch point of $\ol{L}$ as defined above.
\end{definition}

We have in mind the case that the Lagrangian is locally homeomorphic to the product of a smooth (lower dimensional) Lagrangian with a union of  half-lines ending at the origin in the complex plane, as in Figure \ref{lmod}.
 \begin{figure}[ht]
     \centering
     \scalebox{.2}{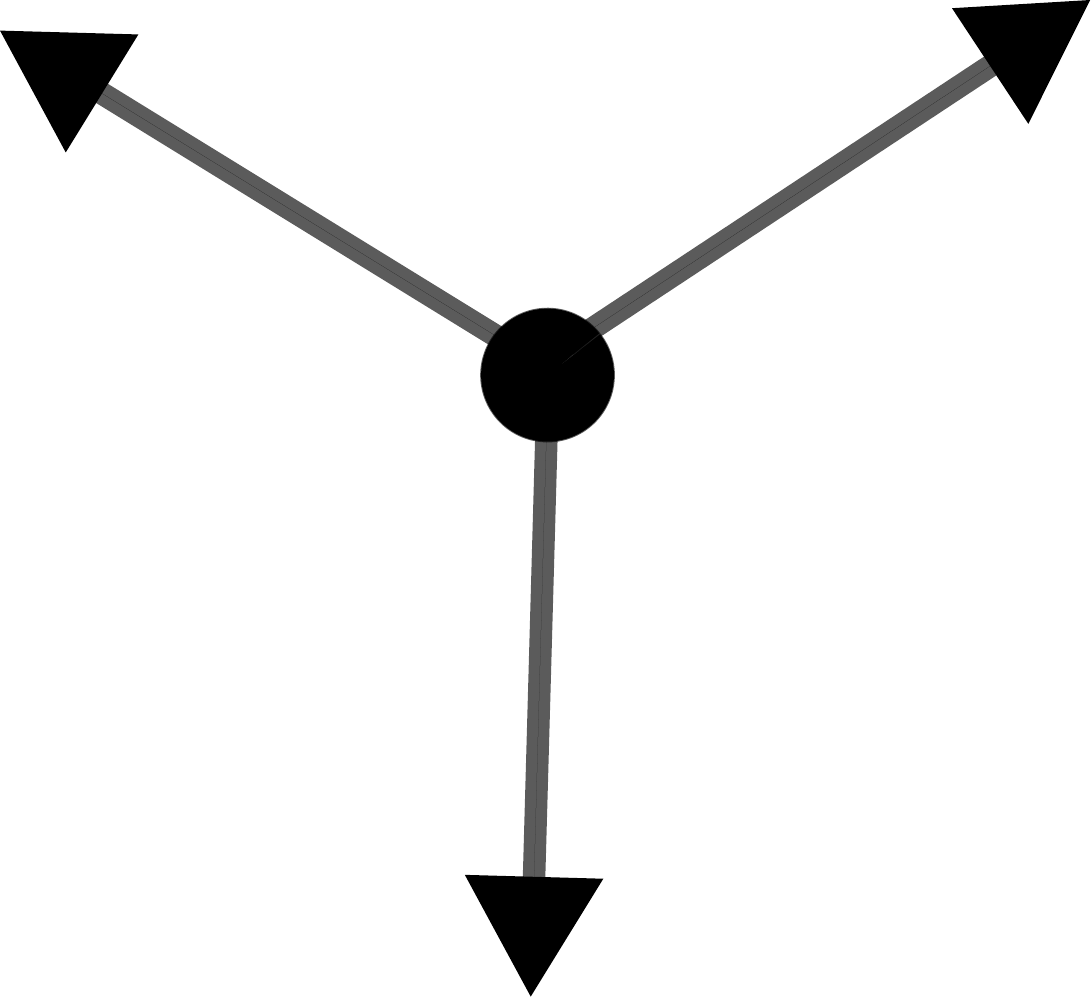}
     \caption{Local model for the Lagrangian $\ol{L}$ near $\Pi$}
     \label{lmod}
 \end{figure}
If $\ol{L}  \subset \ol{X}$ is a branched Lagrangian then the set of branch points
$L_\cap$ is an isotropic submanifold, 
since $\ol{L}$ is locally modelled on the subspace $W_\cap$ near any branch point.
We may consider $\ol{L} = L \cup L_\cap$
as a stratified space with top-dimensional stratum being the set $L$ of smooth points.

\begin{lemma} \label{isotopy}
Let $(Z,\alpha)$ be a fibered contact manifold. If $\Lambda_\pm$ are isotopic Legendrians in $(Z,\alpha)$ then there exists
an exact Lagrangian cobordism with convex end $\Lambda_+$ and concave end $\Lambda_-$ that is diffeomorphic to $\R \times \Lambda_\pm$.
\end{lemma}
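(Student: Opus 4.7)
The strategy is the classical Lagrangian trace construction for Legendrian isotopies, in the style of Chantraine and Ekholm-Honda-K\'alm\'an. Fix a Legendrian isotopy $\{\Lambda_t\}_{t\in[0,1]}$ with $\Lambda_0=\Lambda_+$ and $\Lambda_1=\Lambda_-$. By the Legendrian isotopy extension theorem, a standard consequence of Gray stability, the isotopy extends to a compactly supported ambient contact isotopy $\phi_t : Z \to Z$ with $\phi_0 = \mathrm{id}$ and $\phi_t(\Lambda_+) = \Lambda_t$; denote its generating contact vector field by $X_t$ and its contact Hamiltonian by $H_t = \alpha(X_t)$.

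Working in the symplectization $(\R \times Z, -d(e^{-s}\alpha))$ from Example \ref{ex:sympl} and fixing a non-decreasing cut-off $\rho : \R \to [0,1]$ equal to $0$ on $(-\infty,0]$ and to $1$ on $[1,\infty)$, I define the candidate cobordism as the image of
\[ F : \R \times \Lambda_+ \to \R \times Z, \qquad F(s,x) = \bigl(s + \sigma(s,x),\ \phi_{\rho(s)}(x)\bigr), \]
where $\sigma : \R \times \Lambda_+ \to \R$ is an auxiliary correction supported in $[0,1] \times \Lambda_+$ to be chosen below. Outside $s \in [0,1]$ the correction vanishes and $\phi_{\rho(s)}$ equals $\mathrm{id}$ or $\phi_1$, so the image agrees with the cylinder $\R \times \Lambda_+$ at one end and $\R \times \Lambda_-$ at the other; since $F$ is a smooth embedding, its image is diffeomorphic to $\R \times \Lambda_+$.

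The Lagrangian condition is verified by pullback. Because $\alpha$ vanishes on each Legendrian $\Lambda_t$, the Liouville primitive pulls back to a one-form purely in $ds$:
\[ F^*(e^{-s}\alpha) = e^{-s-\sigma(s,x)}\, \rho'(s)\, H_{\rho(s)}\bigl(\phi_{\rho(s)}(x)\bigr)\, ds. \]
Its exterior derivative vanishes precisely when the coefficient of $ds$ is independent of $x \in \Lambda_+$ at each fixed $s$, which gives the Lagrangian condition $F^* \omega = 0$. The correction is chosen as $\sigma(s,x) = \log H_{\rho(s)}(\phi_{\rho(s)}(x)) - c(s)$ for an appropriate function $c(s)$ which matches the endpoints to zero.

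The main technical obstacle is that this logarithmic correction is only well-defined where $H_t \circ \phi_t$ is strictly positive on $\Lambda_+$; when the restricted contact Hamiltonian changes sign, the naive formula fails. The remedy, essentially the Moser argument from Chantraine, is to replace $\phi_t$ with a contact-isotopic family whose restricted Hamiltonian is positive on $\Lambda_+$ throughout the support of $\rho'$; this is possible because one can reparametrize the isotopy and compose with a suitable time-dependent Reeb shift, preserving the endpoints and the cylindricity at infinity.
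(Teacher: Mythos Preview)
Your route is genuinely different from the paper's, and it has a gap that your proposed remedy does not close.

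The paper does not build a Lagrangian trace inside the standard symplectization at all. Following Chantraine and Golovko as cited, it picks diffeomorphisms $\psi_s:Z\to Z$ with $\psi_s(\Lambda_-)=\Lambda_s$ (constant for $|s|$ large, and \emph{not} required to be contactomorphisms), sets $\alpha_s=\psi_s^*\alpha$, and equips $\R\times Z$ with the modified form $\omega=-d\bigl(e^{-s}\tilde\alpha\bigr)$ where $\tilde\alpha=(\alpha_s)_{s\in\R}$. Because each $\Lambda_s$ is Legendrian, $\alpha_s$ vanishes on $T\Lambda_-$, so the constant cylinder $\R\times\Lambda_-$ is tautologically Lagrangian for this $\omega$. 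The only analytic step is non-degeneracy of $\omega$, which follows once the isotopy is reparametrized so that $\partial_s\alpha_s$ is pointwise small compared with $\alpha_s$. The ends are then identified with the standard symplectization via $\psi_\pm$. No positivity of a contact Hamiltonian is needed, and there is no endpoint-matching problem.

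Your explicit-$\sigma$ construction has a gap distinct from the sign issue you flag. Granting that the Reeb shift makes $H_t>0$ (this does work in the fibered setting, since the Reeb flow has period $1$ and composing with $\psi^R_{Nt}$ for $N\in\Z$ preserves both endpoints while adding $N$ to the Hamiltonian), your formula gives
\[
\sigma(0,x)=\log H_0(x)-c(0).
\]
The restriction $H_0|_{\Lambda_+}$ is the Reeb component of the Legendrian isotopy's initial velocity; it is determined by the isotopy and is in general nonconstant on $\Lambda_+$, and the Reeb shift only adds a constant to it. Hence no choice of $c(0)$ makes $\sigma(0,\cdot)\equiv 0$, so $\sigma$ cannot be supported in $[0,1]\times\Lambda_+$ as you claim, and ``$c(s)$ matches the endpoints to zero'' fails. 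This is repairable in the fibered case: first reparametrize the Legendrian isotopy by $k:[0,1]\to[0,1]$ with $k'(0)=k'(1)=0$, then compose with the Reeb shift $\psi^R_{h(t)}$; the new $H_0,H_1$ are the constants $h'(0),h'(1)$. But that step is absent from your argument. You also assert that $F$ is an embedding without justification; after a large Reeb shift the second component $\tilde\phi_{\rho(s)}(x)$ winds many times around the fibers as $s$ crosses $[0,1]$, and injectivity is no longer obvious.
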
 

\begin{proof} (c.f. Chantraine \cite{chantraine:concordance} and Golovko \cite[Example 4.1]{golovko:note}.)   
We use a Legendrian isotopy to produce an isotopy of contact one forms, and from this a Lagrangian cobordism. 
By definition,  a Legendrian isotopy from $\Lambda_-$ to $\Lambda_+$ consists of a family of Legendrians
\[ \Lambda_\rho \subset Z, \rho \in \R, \quad \Lambda_{\rho} = \Lambda_-, \rho \ll 0, \quad \Lambda_{\rho} = \Lambda_+, \rho \gg 0 .\]
Choose a family of diffeomorphisms 

\[ \psi_s: Z \to Z, \quad  s \in \R \]
so that 
\[ \psi_s(\Lambda_-) = \Lambda_s , \forall s, \quad \partial_s \psi_s = 0 , |s| \gg 0 .\]
We obtain an isotopy of contact forms 
\[ \alpha_s = \psi_s^* \alpha \in \Omega^1(Z). \]

Using the isotopy of contact forms we construct a symplectic cobordism as follows.  
Let  
\[ \ti{\alpha} = (\alpha_s)_{s \in \R} \in \Omega^1(\R \times Z) \] 
and define
\[ \omega := \d_{\R \times Z} ( e^s \ti{\alpha} ) =  (\d s)\wedge (  e^s \alpha_s + e^s \partial_s \alpha_s)  + 
e^s \d_Z \alpha_s \in \Omega^2( \R \times Z) .\]
To show that $\omega$ is symplectic, note that the top exterior power is
\begin{equation}\label{topofnewsymp}
     \omega^n =  n (\d s) \wedge (  e^s \alpha_s + e^s \partial_s \alpha_s) \wedge ( \d_Z \alpha_s)^{n-1}.
\end{equation}
By taking  $\alpha_s$ to be slowly varying, we may assume that the following inequality holds point-wise with respect to some Riemannian metric on the bundle of forms:
\begin{equation}\label{derofalhpaineq}
    \| \d s \wedge \alpha_s \wedge \d_Z (\alpha_s)^{n-1} \| > \|\d s \wedge \partial_s\alpha_s \wedge \d_Z (\alpha_s)^{n-1} \|.
\end{equation}
\noindent Since $ \d s \wedge \alpha_s \wedge \d_Z (\alpha_s)^{n-1}$ is non-vanishing, equations (\ref{topofnewsymp}) and (\ref{derofalhpaineq}) imply that $\omega^n$ is non-vanishing. Thus, $\omega$ is a symplectic form on $\R \times Z$.   

Finally, we give the Lagrangian cobordism.  Define an embedding
\[ \iota: \R \times \Lambda_- \to \R \times Z, \quad (s,\lambda) 
\mapsto (s, \psi_s(\lambda)) .\]
The pull-back one-form is 
\[   \iota^* e^s  \ti{\alpha} = 0  \] 
so the image of the embedding is an exact Lagrangian.  
Thus $(\R \times Z, \omega)$ is a symplectic manifold containing $\iota(\R \times \Lambda_-)$ as a Lagrangian submanifold.   The diffeomorphisms $\psi_\rho$ identify the ends with the symplectizations 
$ \pm (|\rho|, \infty) \times Z$ for $|\rho|$ sufficiently large.    
By symplectic cutting on $\R \times Z$ as in Lemma \ref{lem:trunc}, we obtain a compact symplectic manifold $\ol{X}$ containing $Y_\pm$ as symplectic submanifolds, up to re-scaling of the 
symplectic forms $\omega_\pm$.  
\end{proof}

\label{asym}

Our motivating example of the Harvey-Lawson filling as described in \eqref{hl} is only asymptotically cylindrical rather than cylindrical near infinity, which we now explain. Recall that the space of submanifolds of a fixed dimension has a natural topology in which open neighborhoods
of a given submanifold are those submanifolds  corresponding to smooth sections of the normal 
bundle contained in some tubular neighborhood.  Choose tubular neighborhoods of $\Lambda$ in $Z$ given by 
diffeomorphisms of the normal bundle $N\Lambda$ onto an open neighborhood of $\Lambda$.
The tubular neighborhoods  of $\Lambda$ induce  tubular neighborhoods of $\R \times \Lambda$ given by maps
$  \R \times N \Lambda \to \R \times Z$. Let 
\[ \tau_{\sigma}: \R \times Z \to \R \times Z, \quad (s,z)
\mapsto (s+ \sigma,z) \] 
be the translation map. 

 \begin{definition}  \label{def:conv}
 Let $(Z,\alpha)$ be a fibered contact manifold.  Let $L_\nu \subset \R \times Z, \nu \in \N$
 be a sequence of submanifolds given as the graphs of sections %
\[ s_{L_\nu}: \R \times \Lambda \to \R \times N \Lambda \]
via the tubular embedding of $\R \times N\Lambda$ in $\R \times Z.$
The sequence $L_\nu$
$C^\ell$-{\em  converges} to $(\sigma_-,\sigma_+) \times \Lambda$ if  the sequence of sections $s_{L_\nu}$ converges to $0$  in the $C^\ell$-norm uniformly on  $(\sigma_-,\sigma_+) \times \Lambda$. 

 \vskip .1in \noindent A non-compact Lagrangian $L\subset X$ is {\em  asymptotically cylindrical} if 
the submanifolds 
\[ \tau_{-\sigma} ( L \cap ((\sigma,\sigma+1) \times Z)) \subset (0,1) \times Z \] 
converge in $C^1$ on $(0,1) \times Z$ in the sense 
above.   This ends the Definition.
\end{definition}

\begin{remark}
    Note that the Definition \ref{def:conv} of convergence does not depend on the choice of a tubular neighborhood obtained from a tubular neighborhood of $\Lambda$ in $Z$ since $\Lambda$ is compact.
\end{remark}

\begin{example} \label{cliff2} For the Clifford Legendrian $\Lambda_{\Cliff}$ in 
\eqref{cliffleg}, a natural asymptotically cylindrical filling was introduced by Harvey-Lawson
\cite{hl:cal}, see also Joyce \cite{joyce}.  Let
\[ a_1,\ldots, a_n \ge 0  \] 
\label{rep:nonneg}
be non-negative constants exactly two of which are zero (without loss of
generality, the last two $  a_{n-1} = a_n = 0 $.)
The {\em  Harvey-Lawson asymptotic filling}, 
denoted $L_{(1)}$, of the Clifford Legendrian is
given by \eqref{hl}, and is asymptotic to $\R_{> 0} \times \Lambda_{\Cliff}$ at
infinity in $\C^n$.   

By truncation we obtain fillings of Legendrians in the spheres of  fixed norm. 
Namely, each intersection with a ball $L_{1} \cap B_{e^\sigma}(0)$ fills the Legendrian
\[ \Lambda_\sigma = \left\{ \begin{array}{l}  
|z_1|^2 - a_1^2 = |z_2|^2 - a_2^2 = \ldots  = |z_n|^2  - a_n^2 \\ 
z_1 z_2 \ldots z_n \in (0,\infty) \end{array} \right\}  \cap e^\sigma S^{2n-1}  .\]
The submanifold $\Lambda_\sigma$ is a Legendrian with respect to the extension of the contact form $\alpha_\epsilon$ as defined in Example \ref{cliffleg3}. The projection is a Lagrangian torus orbit 
\begin{equation} \label{rather} \{ |z_1|^2 - a_1^2 = |z_2|^2 - a_2^2 = \ldots = |z_n|^2 -a_n^2  \}  \cap e^\sigma S^{2n-1} / S^1  .\end{equation}
This Lagrangian is a toric moment fiber over a point close, but not equal, to the barycenter $e^\sigma (1,1,\ldots, 1)/n$ of the moment polytope
$\Delta_{n-1}$ of $\CP^{n-1}$
\begin{equation} \label{rep:delta}
\Delta_{n-1} = \{ \lambda_1 + \ldots + \lambda_n = 1 \} \cap \R_{\ge 0}^n . \end{equation}
In particular, if $L$ is an asymptotically cylindrical Lagrangian with limits $\Lambda_\pm$ that have monotone
projections to $Y_\pm$, the truncations $\Lambda_{\sigma_+}$ may not have monotone projections. 
\end{example}

\begin{figure}[ht]
     \centering
     \scalebox{.7}{
\begingroup%
  \makeatletter%
  \providecommand\color[2][]{%
    \errmessage{(Inkscape) Color is used for the text in Inkscape, but the package 'color.sty' is not loaded}%
    \renewcommand\color[2][]{}%
  }%
  \providecommand\transparent[1]{%
    \errmessage{(Inkscape) Transparency is used (non-zero) for the text in Inkscape, but the package 'transparent.sty' is not loaded}%
    \renewcommand\transparent[1]{}%
  }%
  \providecommand\rotatebox[2]{#2}%
  \newcommand*\fsize{\dimexpr\f@size pt\relax}%
  \newcommand*\lineheight[1]{\fontsize{\fsize}{#1\fsize}\selectfont}%
  \ifx\svgwidth\undefined%
    \setlength{\unitlength}{153.96021805bp}%
    \ifx\svgscale\undefined%
      \relax%
    \else%
      \setlength{\unitlength}{\unitlength * \real{\svgscale}}%
    \fi%
  \else%
    \setlength{\unitlength}{\svgwidth}%
  \fi%
  \global\let\svgwidth\undefined%
  \global\let\svgscale\undefined%
  \makeatother%
  \begin{picture}(1,0.98992234)%
    \lineheight{1}%
    \setlength\tabcolsep{0pt}%
    \put(0,0){\includegraphics[width=\unitlength,page=1]{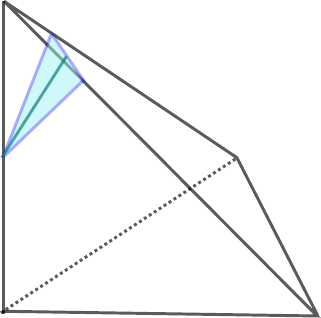}}%
    \put(0.25764036,0.79453763){\color[rgb]{0,0.50196078,0.50196078}\makebox(0,0)[lt]{\lineheight{1.25}\smash{\begin{tabular}[t]{l}$L$\end{tabular}}}}%
    \put(0.13448098,0.5628808){\color[rgb]{0,0,1}\makebox(0,0)[lt]{\lineheight{1.25}\smash{\begin{tabular}[t]{l}$Y_1$\end{tabular}}}}%
  \end{picture}%
\endgroup%
}
     \caption{The moment image of the Harvey-Lawson filling}
     \label{hlfig}
\end{figure}

For technical reasons, in the construction of moduli spaces of pseudoholomorphic curves it is convenient to have cylindrical-near-infinity Lagrangians rather than asymptotically cylindrical Lagrangians.  Presumably, the gluing and compactness results in symplectic field theory hold equally well for asymptotically cylindrical Lagrangians, but these
results have yet to appear in the literature.  The following lemma
allows us to replace asymptotically cylindrical Lagrangians
with those cylindrical near infinity.

\begin{remark} {\rm (Outline of straightening construction)}
Since the Lemma is somewhat technical, we give an outline of the construction first. We are given a Lagrangian asymptotically cylindrical cobordism that we wish to make cylindrical near infinity.  The intersection $\Lambda_{\sigma}$ of the Lagrangian  with any slice $\{ \sigma \} \times Z_\pm$ on the neck is not necessarily Legendrian.  Instead, we construct a nearby contact structure $\alpha_\sigma$ for which the slice is Legendrian.   The cylinder $L_\sigma = \R \times \Lambda_{\sigma}$ is Lagrangian for the symplectization of $Z$ with respect to $\alpha_\sigma$.   Using the equivariant coisotropic embedding theorem we deform $L$  to agree with $L_\sigma$ on an open subset, and replacing the end of $L$ with half of $L_\sigma$ gives the desired cobordism that is cylindrical near infinity.   The price of this construction is that the cylindrical end of the Lagrangian cobordism is now modelled on a different contact structure.   However, the new contact structure and Legendrian may be taken arbitrarily close to the original, and then invariance of holomorphic curves under deformation will show that the resulting Lagrangian cobordism still defines a chain map between Chekanov-Eliashberg algebras, as explained in \cite[Remark 3.15]{BCSW2}. 
\label{rep:straighten}
\end{remark}

\begin{lemma} \label{asymlem} {\rm (Straightening Lemma)}
Suppose that $X$ is a smooth symplectic manifold with ends modelled on $\R \times Z_\pm$ with symplectic form $\d(e^s \alpha_\pm)$. 
Let $L \subset X$ be an asymptotically cylindrical Lagrangian with limiting  Legendrians $\Lambda_\pm \subset Z_\pm$.  Suppose that $\Lambda_\pm$ fiber over Lagrangians $\Pi_\pm \subset Y$ with finite covering group 
the group of $k$-th roots of unity
\[ \Upsilon = \{ \zeta \in S^1 | \zeta^k = 1 \} \] 
and $L$ is $\Upsilon$-invariant on the ends.   For $\sigma_\pm > 0$ sufficiently large, let
\[ X_{[e^{\sigma_-},e^{\sigma_+}]}  := X -  (\sigma_+,\infty) \times Z_+
- (- \infty,-\sigma_-) \times Z_- \] 
denote the result of truncating the cylindrical ends at $s = \sigma_\pm$. Let 
\[ \Lambda_{\sigma_\pm} := ( \{ \pm\sigma_\pm \} \times Z_\pm) \cap L \] 
be the intersection with the $\sigma$-level set.  There exist connection one-forms $\alpha_{\sigma_\pm}$ on $Z_\pm$ for which $\Lambda_{\sigma_\pm}$ are Legendrian and so that 
\[ L_{[e^{\sigma_-},e^{\sigma_+}]} := X_{[e^{\sigma_-},e^{\sigma_+}]} \cap L \] 
can be isotoped through Lagrangians to a cylindrical near infinity Lagrangian cobordism
pair $(X,L')$ with concave end $(Z_-, \Lambda_{\sigma_-})$ and convex end $(Z_+, \Lambda_{\sigma_+})$ for some stable Hamiltonian structures $(Z_\pm,\alpha_{\sigma_\pm}, \omega_\pm)$.
Furthermore, we may choose $\alpha_{\sigma_\pm}$ arbitrarily close to $\alpha_\pm$, so that in particular $\alpha_{\sigma_\pm}$ define contact structures on $Z_\pm$, and so the submanifold $\Lambda_{\sigma_+}$ is  Legendrian for the contact manifolds $(Z_\pm,\alpha_{\sigma_\pm})$.
\end{lemma}

\begin{proof} Moser isotopy can be used to create a cylindrical end  
for which the Lagrangian is cylindrical near infinity.  It suffices to consider the  case that $X$ has only a convex end $Z_+$, as the case of a concave end is similar.  By assumption, the submanifold $L$ is $C^1$ close to $\R \times \Lambda_+$ on the end. It follows that $\Lambda_{\sigma_+}$ is a smooth codimension one submanifold of $L$.  The projection 
\[ \Pi_{\sigma_+} = p(\Lambda_{\sigma_+}) \subset \{ \sigma_+ \} 
\times Y \]
\label{rep:noZsig}
is an (in general immersed) isotropic submanifold,  since $ \omega = \d (e^s \alpha_+)$ 
vanishes on $\Lambda_{\sigma_+}$.  Because $\Lambda_+$ is an $\Upsilon$-cover and $\Lambda_{\sigma_+}$ is $C^1$-close to $\Lambda_+$, the map $\Lambda_{\sigma_+} \to \Pi_{\sigma_+}$ is an $\Upsilon$-cover.

\vskip .1in \noindent {\em Step 1:  There exist a connection one-form on 
\[ Z_{\sigma_+} := \{ \sigma_+ \} \times Z_+ \cong Z_+ \]
for which $\Lambda_{\sigma_+}$ is horizontal.}  
Indeed, by the equivalence between connection one-forms and horizontal distributions, at any point $z \in \Lambda_{\sigma_+}$ there is a unique one-form 
\[ \alpha_{\sigma_+} \in \Hom(T_z (Z_{\sigma_+} |_{\Pi_{\sigma_+}}),\R)  \] 
where $Z_{\sigma_+} |_{\Pi_{\sigma_+}}$ is the restriction of the circle bundle $Z_{\sigma_+}$ 
to the image of the Lagrangian $\Pi_{\sigma_+}$ for which $T_z \Lambda_{\sigma_+}$ is horizontal and for which
$\alpha_{\sigma_+}(2 \pi \ppth) = 1$.  By $S^1$-invariance, 
$\alpha_{\sigma_+}$ extends to a form on the orbit of $z$.   Since $\Lambda_{\sigma_+} \subset L$
is by assumption $\Upsilon$-invariant, $T_{hz} \Lambda_{\sigma_+}$ is horizontal for any $h \in \Upsilon$.  This construction defines a one-form 
\[ \alpha_{\sigma_+} \in \Omega^1(Z_{\sigma_+} |_{\Pi_{\sigma_+}}) \]
of $Z_+$ to $\Pi_{\sigma_+}$.  Using an $S^1$-equivariant tubular neighborhood,  one may extend $\alpha_{\sigma_+}$ first to an open neighborhood of $Z_{\sigma_+} |_{\Pi_{\sigma_+}}$, then to all of $Z_{\sigma_+}$ using a patching argument and convexity 
of the space of connection one-forms.  
For $\sigma_+$ is sufficiently large,  $\alpha_{\sigma_+}$ is $C^1$-close to $\alpha_+$,  and so the form $\d \alpha_{\sigma_+}$ is
$C^0$ close to $\d \alpha_+ = \pi^* \omega_{Y_+}$.

\vskip .1in \noindent {\em Step 2:  There exists a cylindrical symplectic cobordism containing a cylindrical Lagrangian cobordism containing $(Z_{\sigma_+},\Lambda_{\sigma_+})$}. The cylindrical submanifold %
\[ L_{[\sigma_0,\sigma_1]} := [\sigma_0,\sigma_1] \times \Lambda_{\sigma_+} \] 
(shown in Figure \ref{fig:straighten})  is isotropic with respect to the closed two-form 
%
\[ \omega_{[\sigma_0,\sigma_1]} := \pi_{\sigma_+}^* (\omega | \{ \sigma_+\} \times Z_+)  + \d ( ( e^s - e^{\sigma_+}) \alpha_{\sigma_+}) \in \Omega^2([\sigma_0, \sigma_1] \times Z_+)  \]
where $\pi_\sigma: \R \times Z_\pm \to X$ is the map obtained
by composing projection onto $Z_\pm$ with inclusion of $Z_\pm$ into $X$.  Furthermore, after 
shrinking the interval $[\sigma_0,\sigma_1]$, the closed form $\omega_{\sigma_+}$ is symplectic
on $[\sigma_0,\sigma_1] \times Z_+ $ of $\{ \sigma_+ \} \times Z_+$ and $L_{[\sigma_0,\sigma_1]}$ is Lagrangian.  By construction, the map $(s,z) \mapsto e^s$ is a moment map for the $S^1$-action.

\vskip .1in \noindent {\em Step 3: The cobordism  $([\sigma_0,\sigma_1] \times Z_+ ,[\sigma_0 \times \sigma_1] \times \Lambda_{\sigma_+})$ is symplectomorphic to 
$[\sigma_0,\sigma_1] \times Z_+,L_{[\sigma_0,\sigma_1]})$ after shrinking the tubular neighborhood.}   The equivariant coisotropic embedding theorem (\cite[p. 315]{gs:stp}, \cite[Remark 2.4]{sl:strat})  \label{rep:moser} implies that the forms $ \omega$ and $\omega_{[\sigma_1, \sigma_2]}$ are symplectomorphic \label{rep:equiv} by a diffeomorphism of symplectic-manifolds-with-boundary
\begin{equation} \label{eq:psi} \psi: (X_{[\sigma_0,\sigma_1])}, \omega ) \to ( X_{[\sigma_0,\sigma_1]},\omega_{[\sigma_0,\sigma_1]} ) \end{equation}
that preserves the moment maps $(s,z) \mapsto e^s$ for the $S^1$-actions.    Furthermore, $\psi$ may be taken to  equal  the identity on $\Lambda_{\sigma_+}$, since the two-forms $\omega,\omega_{[\sigma_0,\sigma_1]}$ agree  on $TZ |_{\Lambda_{\sigma_+}}$.

\vskip .1in \noindent {\em Step 4:  The image of $L$ is isotopic to $L_{[\sigma_0, \sigma_1]}$.}   The image $\psi(L)$ of $L$ under $\psi$ is a Lagrangian with respect to $\omega_{\sigma_+}$  and agrees with $L_{\sigma_+}$ at
$\Lambda_{\sigma_+}$.  It follows that $\psi(L)$ is an exact deformation of $L_{\sigma_+}$ in
an open neighborhood of $\Lambda_{\sigma_+}$, given by the Hamiltonian flow 
of $L_{\sigma_+}$  of some $S^1$-invariant function 
\[ H: X_{[\sigma_0,\sigma_1]} \to \R . \] 
A cylindrical-near-infinity Lagrangian $ \psi'(L) \subset X $
is given as Hamiltonian flow $\psi'$ of $L$ for $\rho H$ where $\rho$ is a function
whose derivative is supported near $\{ \sigma_+ \} \times \Lambda_{\sigma_+}$,
equal to $0$ in an open neighborhood of $\{ \sigma_1 \} \times \Lambda_{\sigma_+}$
and $1$ in an open neighborhood of $\{ \sigma_0 \} \times \Lambda_{\sigma_-}$.
  Gluing together 
$\psi'(L)$ with $L$ (the map $\psi$ is the identity 
outside of a small open neighborhood of $\{ \sigma_+ \} \times Z_+$) gives
the required Lagrangian $L'$ in the statement of the Lemma. 
\end{proof}

\begin{example}  We specialize to the case of the Harvey-Lawson filling.
The straightening Lemma \ref{asymlem} implies that 
after a Hamiltonian isotopy we may assume that
$L_{(1)} \cong S^1 \times \R^{n-1} \subset \C^n$ is invariant under dilation in a 
neighborhood of some sphere $e^\sigma S^{2n-1} \cap L_{(1)}$. 
\end{example}

\begin{figure}
    \centering
    \scalebox{.7}{
\begingroup%
  \makeatletter%
  \providecommand\color[2][]{%
    \errmessage{(Inkscape) Color is used for the text in Inkscape, but the package 'color.sty' is not loaded}%
    \renewcommand\color[2][]{}%
  }%
  \providecommand\transparent[1]{%
    \errmessage{(Inkscape) Transparency is used (non-zero) for the text in Inkscape, but the package 'transparent.sty' is not loaded}%
    \renewcommand\transparent[1]{}%
  }%
  \providecommand\rotatebox[2]{#2}%
  \newcommand*\fsize{\dimexpr\f@size pt\relax}%
  \newcommand*\lineheight[1]{\fontsize{\fsize}{#1\fsize}\selectfont}%
  \ifx\svgwidth\undefined%
    \setlength{\unitlength}{372.71598275bp}%
    \ifx\svgscale\undefined%
      \relax%
    \else%
      \setlength{\unitlength}{\unitlength * \real{\svgscale}}%
    \fi%
  \else%
    \setlength{\unitlength}{\svgwidth}%
  \fi%
  \global\let\svgwidth\undefined%
  \global\let\svgscale\undefined%
  \makeatother%
  \begin{picture}(1,0.56383232)%
    \lineheight{1}%
    \setlength\tabcolsep{0pt}%
    \put(0,0){\includegraphics[width=\unitlength,page=1]{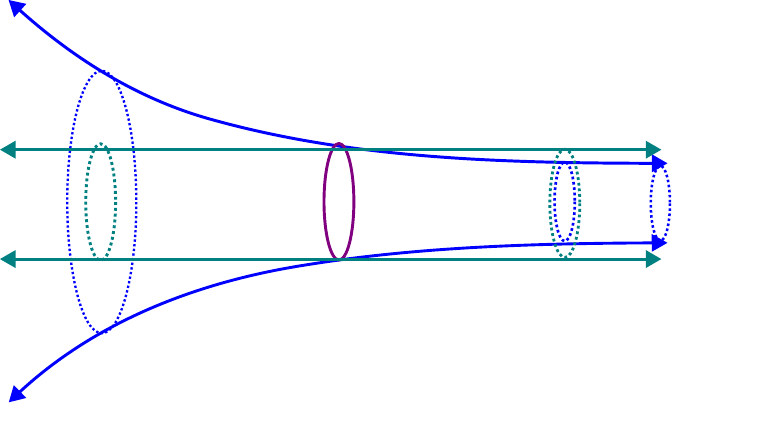}}%
    \put(0.61489264,0.00000004){\color[rgb]{0,0,1}\makebox(0,0)[lt]{\lineheight{1.25}\smash{\begin{tabular}[t]{l}L\end{tabular}}}}%
    \put(0.18017046,0.34259493){\color[rgb]{0,0.50196078,0.50196078}\makebox(0,0)[lt]{\lineheight{1.25}\smash{\begin{tabular}[t]{l}$\R \times \Lambda_\sigma$\end{tabular}}}}%
    \put(0.46036362,0.28889135){\color[rgb]{0.50196078,0,0.50196078}\makebox(0,0)[lt]{\lineheight{1.25}\smash{\begin{tabular}[t]{l}$\Lambda_\sigma$\end{tabular}}}}%
    \put(0.87357492,0.29750531){\color[rgb]{0.50196078,0,0.50196078}\makebox(0,0)[lt]{\lineheight{1.25}\smash{\begin{tabular}[t]{l}$\Lambda$\end{tabular}}}}%
    \put(0.04065155,0.45526757){\color[rgb]{0.50196078,0,0.50196078}\makebox(0,0)[lt]{\lineheight{1.25}\smash{\begin{tabular}[t]{l}$L'$\end{tabular}}}}%
    \put(0.75147436,0.3142117){\color[rgb]{0,0,1}\makebox(0,0)[lt]{\lineheight{1.25}\smash{\begin{tabular}[t]{l}$L$\end{tabular}}}}%
    \put(0,0){\includegraphics[width=\unitlength,page=2]{straighten.pdf}}%
  \end{picture}%
\endgroup%
}
    \caption{Straightening an asymptotically cylindrical Lagrangian}
    \label{fig:straighten}
\end{figure}

\begin{remark} We remark on the possibility of obtaining a cobordism between contact manifolds, rather than stable Hamiltonian manifolds.   If $\sigma_+, \sigma_-$ are sufficiently large then 
the forms $\alpha_{\sigma_\pm}$ are contact forms.  However, we may not be able
to choose a cobordism between $(Z_-,\alpha_{\sigma_-})$ and $(Z_+,\alpha_{\sigma_+})$ in the sense that the forms $\d \alpha_{\sigma_\pm}$ extend as symplectic forms over the cobordism and $L$ is Lagrangian with respect to the extension.  Indeed, the action of loops in the Legendrian $\Lambda$ as $\d \alpha_{\sigma_\pm}$ may not equal those of $ \omega_{\sigma_\pm}$.
\end{remark}

\section{Pseudoholomorphic buildings}
\label{tocob}

In this section,  we construct the moduli spaces of holomorphic buildings 
used for both the differentials in the Chekanov-Eliasberg algebra and the 
chain maps used associated to Lagrangian cobordisms.   Because our stable Hamiltonian manifolds
are circle-fibered over symplectic manifolds and we consider only holomorphic disks
rather than curves of higher genus, we may use Cieliebak-Mohnke \cite{cm:trans} perturbations to regularize. 

\subsection{Punctured surfaces} 

We recall basic terminology for surfaces with strip-like ends
at punctures.

\begin{definition} \label{def:surfwstrip}
A {\em  surface
with strip-like and cylindrical ends}  $S$ is obtained from a closed
oriented surface-with-boundary $\ol{S}$ by removing a finite collection of
boundary points $z_{e,\white}, e = 1,\ldots, e(\white)$ 
and interior points 
$z_{e,\black}, e = 1,\ldots, e(\black) $:
\[ S = \ol{S} - \{ z_{e,\white}, e = 1,\ldots, e(\white), \quad 
z_{e,\black}, e = 1,\ldots, e(\black) . \} \]
We call $S$ a {\em  punctured surface} for short, and the removed
points {\em  punctures}.     

Suppose $S$ is equipped with a conformal structure, giving rise to an
almost complex structure 
\[ j: TS \to TS, \quad j^2 = -\on{Id}_{TS} . \] 
An open neighborhood $U_{e,\white} \subset S$
of each puncture $z_{e,\white}$ is assumed to be equipped with a local coordinate, and
similarly for open neighborhoods $U_{e,\black}$ of $z_{e,\black}$.  Each such 
coordinate gives 
 holomorphic embeddings called {\em  strip-like} resp. {\em  cylindrical ends}
 \[ \kappa_{e,\white}: U_{e,\white} - \{ z_{e,\white} \} \cong \pm \R_{> 0} \times [0,1 ],
 \quad \kappa_{e,\black}: U_{e,\black} - \{ z_{e, \black} \} \cong \pm \R_{> 0} \times S^1
 . \]
The set of ends is denoted $\mE(S)$ and is equipped with a partition into incoming resp. outgoing ends  
\[  \mE(S) = \mE_+(S) \cup \mE_-(S)  \]   
based on whether the embedding is into $+\R_{>0}$ or $-\R_{>0}$. In the later definition of punctured holomorphic maps, the incoming resp. outgoing ends will be mapped to the convex resp. concave end of a symplectic cobordism, asymptotic to Reeb chords or Reeb orbits. The Reeb chords at incoming resp. outgoing punctures will be used as input resp. output for the differential and cobordism maps of Legendrian contact homology \cite{BCSW2}.
\end{definition}

In the Gromov compactification of the moduli space of holomorphic maps, nodal surfaces will appear as degenerations.

\begin{definition} \label{def:nodsurf}
A {\em  nodal surface } $S$ is obtained from a surface with boundary
$\ti{S}$ with strip-like and cylindrical ends by gluing along disjoint
pairs of points $w_\pm(e) \in \ti{S}$, where $e$ ranges over some
index set, which may be either a pair of boundary points
$w_\pm(e) \in \partial \ti{S}$ or a pair of interior points
$w_\pm(e) \in \on{int}(\ti{S})$. \label{rep:orapair}

\vskip .1in \noindent
A nodal surface $S$ is a {\em  nodal
  disk} if each component $S_v$ is a disk, the boundary $\partial S$
is connected, and the combinatorial type of the curve (defined below)  is a tree.   \label{rep:tree}

\vskip .1in \noindent  A {\em 
  marking} is a pair $\ul{z} = (\ul{z}_\black, \ul{z}_\white)$
consisting of a tuple of points $\ul{z}_\white$ on the boundary
$\partial S$ disjoint from the nodes and a tuple of points
$\ul{z}_\black \in \on{int}(S)$ in the interior, with the property
that the ordering of points on the boundary
$\partial S_v \cap \ul{z}_\white$ is cyclic with respect to the induced (counter-clockwise) orientation of the boundary.

\vskip .1in \noindent
A marked  nodal surface $(S,\ul{z})$ is {\em  stable} if the group of automorphisms
%
$\Aut(S) $
%
is finite.   \label{rep:stable} Equivalently, for each sphere component there are at least three special (nodal or marked points), while for each disk component there are at least three special boundary 
points or at least one special boundary point and one special interior point.

\vskip .1in \noindent
A marked nodal surface $(S,\ul{z})$ has a combinatorial type
$\Gamma = \Gamma(S)$, which is a graph 
\label{rep:whichis} 
whose vertices $v \in \Ver(\Gamma)$ correspond to
components $S_v$ and edges $e \in \Edge(\Gamma)$ correspond to nodes
$w_\pm(e) \in S$ or markings $z(e) \in S$; we define the subset
$\Edge_\rightarrow(\Gamma)$ to be the set of edges corresponding to
markings and call them {\em  leaves}.   

\vskip .1in \noindent 
A \textit{matching isomorphism} at a node $w_\pm(e)$ where the edge $e$ is between vertices $v_\pm$ is the data of an $S^1$-equivariant diffeomorphism between the circles 
\begin{equation} \label{eq:me}
    m_e : T_{w_+(e)}S_{v_+}\setminus{\{0\}} /\R_+ \to T_{w_-(e)}S_{v_-}\setminus{\{0\}} /\R_- .
\end{equation}
\vskip .1in \noindent 
An \textit{asymptotic marker} at an interior marked point $z(e)$
corresponding to an interior edge $e \in \Edge_\black(\Gamma)$ is an element  
\[ m_e \in T_{z_\black}S\setminus{\{0 \}}/\R . \]
\vskip .1in \noindent 
For nodal disks $(S,\ul{z})$, the combinatorial type $\Gamma$ is required to be a tree.  By a {\em decorated nodal disk} we mean a nodal disk with matching isomorphisms and asymptotic markers.
\label{rep:asympmarker}
\end{definition}

\subsection{Pseudoholomorphic buildings} 

We define pseudoholomorphic buildings in symplectic cobordisms with ends modelled on
cylinders over stable Hamiltonian manifolds. 

\begin{definition} \label{def:cyl}
Let $(Z,\alpha,\omega) \to Y$ be a circle-fibered stable Hamiltonian manifold as above and $X = \R \times Z$ be the trivial cobordism.    An almost complex structure $ J : TX \to TX$ is
  {\em  cylindrical} if there exists an almost complex structure
  $\ol{J} : TY \to TY $ so that the projection
  $p_X: X \to Y$ is almost complex and $J$ is
  the standard almost complex structure on any fiber.  More precisely, let 
\[ \partial_s \in \Vect(\R \times Z), \quad \partial_{\theta} \in \Vect(\R \times Z) \]
denote the translational vector field on $\R$ resp. rotational vector field on $Z$. 
The almost complex structure $J$ is determined
on the vertical part of $TX$ by 
\[ \quad J  \partial_s = \partial_\theta, \quad J \partial_\theta = - \partial_s  \  \text{in}  \ \Vect(\R \times Z) .\] 
On the other hand, the projection to $Y$ is required to be almost complex:
\[   D {p}_X J = \ol{J} D {p}_X \  \text{in}  \ \Map(TX,TY) . \]
Suppose now $X$ is an arbitrary cobordism with concave end $Z_-$ and convex end $Z_+$.
An almost complex structure $J$ on $X$ is called {\em  cylindrical}
if it is the restriction of cylindrical almost complex structures
on $\R \times Z_\pm$ on the cylindrical ends $\pm (0,\infty) \times Z_\pm \to X$.    The space of cylindrical almost complex structures is denoted $\J_{\cyl}(X)$.
This ends the Definition.
\end{definition}

We recall some notions of energy for maps from a punctured surface
$S$.

\begin{definition} \label{henergy}
\begin{enumerate} 
\item {\rm (Horizontal energy)} The {\em  horizontal energy} of a holomorphic map $ u = (\phi,v): (S,j) \to (\R \times Z,J)$ bounding
  $\R \times \Lambda$ is (\cite[5.3]{sft})
\[ E^{\bh}(u) = \int_S v^* \omega .\]
\item {\rm (Vertical energy)} The {\em  vertical energy} of a
  holomorphic map $ u = (\phi,v): (S,j) \to (\R \times Z,J)$ is
  (\cite[5.3]{sft})
\begin{equation} \label{alphaen}
E^{\bv}(u) = \sup_{\zeta} \int_S (\zeta \circ \phi) \d \phi \wedge v^*
\alpha \end{equation}
where the supremum is taken over the set of all non-negative
$C^\infty$ functions 
\[\zeta: \R \to \R, \quad  \int_\R \zeta(s) \d s = 1 \]
with compact support.
\item {\rm (Hofer energy)} The {\em  Hofer energy} of a holomorphic map
\[ u = (\phi,v): (S,j) \to (\R \times Z,J) \] 
is (\cite[5.3]{sft})
  is the sum
\[ E(u) = E^{\bh}(u) + E^{\bv}(u) .\]
\item {\rm (Generalization to manifolds with cylindrical ends)}
  Suppose that $X$ is a  manifold with cylindrical ends
  modelled on $\pm(0,\infty) \times Z_\pm$.  \label{Zfix} \label{Zfixp} The
  vertical energy $E^{\bv}(u)$ is defined as before in \eqref{alphaen}.
  The Hofer energy $E(u)$ of a map $u: S \to X$ from a
  surface $S$ with cylindrical ends to $X$ is defined by
  dividing $X$ into a compact piece $X^{\on{com}}$ and 
  cylindrical ends \label{Rfix} \label{Rfixp} diffeomorphic to
  $\pm(0,\infty) \times Z_\pm$.  Then we set
\[ E(u) = E(u | X^{\on{com}} ) + 
E(u | (0,\infty) \times Z_+) + 
E(u | (-\infty,0) \times Z_-)  
.\]
\end{enumerate} 
\end{definition}

In symplectic field theory, the compactification of punctured
holomorphic curves is given by {\em  holomorphic buildings} in which bubbling
occurs on the cylindrical ends.  
Let $(S, \partial S, j)$ be a bordered Riemann surface, with punctures in the interior and on the boundary.  Let $(X,J)$ be an almost complex manifold.    A smooth map $u : S \to X $ is {\em 
  pseudoholomorphic} 
  (or {\em holomorphic} for short)
  if the antiholomorphic part of the derivative
vanishes:
\[ \olp_J u := \frac{1}{2} ( \d u + J \d u j ) = 0  .\]

\begin{definition}[Compactifying symplectizations]\label{disc:compactifying targets}
    Let $Z$ be a compact circle-fibered contact manifold.  Let $\ol{\R \times Z}$ be the almost complex manifold defined by adding two divisors $Y_{\infty}, Y_0$ to compactify $\R \times Z$.  Explicitly, $\ol{\R \times Z}$ is the associated fiber bundle as in Example \ref{ex:symp2} 
    \[ \ol{\R \times Z} = (Z \times \CP^1)/S^1 \],
    where the action of $S^1$ is diagonal. This has the effect of replacing each $S^1$-fiber in $Z$ with a 
    $\CP^1$.   The bundle comes equipped with a zero divisor and an infinity divisor 
    \[ Y_0 = (Z \times \{ 0 \})/S^1, \quad Y_\infty = (Z \times \{ \infty \})/S^1 .\]
    The almost complex structure on $\ol{\R \times Z}$ is determined uniquely by that of $Y$ in the horizontal directions, using the connection,  and that of $\C P^1$ in the vertical directions. We can define a symplectic form $\omega_{\sigma_1,\sigma_2}$ on $\ol{\R \times Z}$ by identifying $\ol {\R \times Z}$ with the  symplectic cut as in Lemma \ref{lem:trunc}.

    We can similarly compactify a cobordism $X$ by adding divisors $Y_{+,\infty}$ at the convex end and $Y_{-,0}$ at the concave end, and define symplectic forms on the compactification. We denote the symplectic form on the compactification $\ol{X}$ with $\ol{\omega}_{\sigma_+,\sigma_-}$ induced from symplectic cut at $\{\sigma_\pm\} \times Z_\pm$. We will suppress the subscript at times and use the notation $\ol{\omega}$ for the symplectic form on $\ol{X}.$      This ends the Definition.
   
\end{definition}

\begin{definition} {\rm (Holomorphic Buildings)}
Let $X$ is be a cobordism with concave end $Z_-$ and convex end $Z_+$, and
$J$ is an almost complex structure on $X$ that is cylindrical on the ends.
For integers $k_-, k_+ \ge 0$, define topological spaces
$ \XX[k_-,k_+]$ by adding on $k_-$ resp. $k_+$ outgoing resp. incoming neck pieces: 
\begin{multline} 
\XX[k_-,k_+] :=(\ol{\R \times Z_-}) \cup_{Y_-}  \ldots 
\cup_{Y_-} (\ol{\R \times Z_-}) \cup_{Y_-} 
\ol{X} \\ \cup_{Y_+} (\ol{\R \times Z_+} )
\cup_{Y_+} \ldots \cup_{Y_+} (\ol{\R \times Z_+} ) .\end{multline}
Denote by $\ol{\XX}[k_-,k_+]_k$ the $k$-th topological space
in the union above, 
and $\XX[k_-,k_+]_k$ the complement of the copies of $Y_\pm$.
Thus 
\[  \XX[k_-,k_+]_k = \begin{cases} {\R \times Z_-} & k< 0 \\ 
X & k = 0 \\ 
{\R \times Z_+} & k> 0 \end{cases} .\]   
Let $L$ be a Lagrangian cobordism with concave end $\Lambda_-$ and convex end $\Lambda_+$.
Let
\begin{multline} 
\LL[k_-,k_+] :=\ol{\R \times \Lambda_-} \cup_{\Pi_-}  \ldots 
\cup_{\Pi_-} \ol{\R \times \Lambda_-} \cup_{\Pi_-} 
\ol{L} \\ \cup_{\Pi_+} \ol{\R \times \Lambda_+} 
\cup_{\Pi_+} \ldots \cup_{\Pi_+} \ol{\R \times \Lambda_+} \end{multline}
be the union of  the closure $\ol{L}$ with 
$k_-,k_+$ cylindrical pieces attached to the ends $\Pi_\pm$. 

\vskip .1in \noindent 
A {\em  holomorphic $(k_-,k_+)$-building} in $X$ bounding $L$ is a continuous map of a nodal curve-with-boundary $S$ 
to 
$\XX[k_-,k_+]$ with the following properties:  
For  each irreducible component $S_v$, let $S_v^\circ$ denote the pre-image of $\XX[k_-,k_+]_{k(v)}$; we call the elements
of $S_v - S_v^\circ$ the {\em punctures}.
The restriction $u_v$ of $u$
is a holomorphic map to some component $\XX[k_-,k_+]_{k(v)}$:
\[ u_v: S_v \to \XX[k_-,k_+]_{k(v)}, \ v \in \Ver(\Gamma)  \] 
with finite Hofer energy. 

Since our contact form is of Morse-Bott type, the finiteness of Hofer energy guarantees the punctures of the curve are asymptotic to either Reeb orbits or Reeb chords by the results of \cite{sft}. For each puncture $w \in \ol{S}_v$, let $U_w$ be an open neighborhood of $w$ and $z$ a coordinate so that
the puncture is at $z = 0$ and
\begin{enumerate}
\item if $w$ is an interior puncture, the asymptotic marker at $w$ is the equivalence class of the vector $(1,0)$ in the local coordinate;
\item if $w$ is a boundary puncture, then $U_w$ is an upper half-isk in the local coordinate, with boundary on the real line. 
\end{enumerate}
Then, in the given local coordinates
\[ \gamma(\theta) := \lim_{r \to 0} \pi_Z (u(re^{i\theta})) \] 
is a Reeb orbit or chord.

We require that the incoming punctures are asymptotic to the convex end and the outgoing punctures are asymptotic to the concave end.  For each node $w$ of $S$ connecting components $S_{v_-}$ and $S_{v_+}$ and mapping to some copy of $Y$, the limiting Reeb chord or orbit $\gamma_-$ of $u_{v_-}$ at $w$ is equal to limiting Reeb chord or orbit $\gamma_+$ of $u_{v_+}$ at $w$.   We write $u: S \to \XX$ for short. 

\vskip .1in \noindent 
An {\em  isomorphism} between buildings $u_0,u_1: S_0,S_1 \to \XX[k_-,k_+]$ is an isomorphism of nodal 
curves $\phi: S_0 \to S_1$, 
preserving the matching isomorphisms and asymptotic markers, and a combination of translations $\psi_k: \XX[k_-,k_+]_k \to \XX[k_-,k_+]_k $ for $k\neq 0$ on the cylindrical pieces
so that 
\[ u_1 \circ \phi := \psi_k \circ u_0 \] 
where each $\psi_k$ is a diffeomorphism of the form
\[ \psi_k: \R \times Z_\pm \to \R \times Z_\pm, \quad (s,z) \mapsto (s+a_k,z)  \] 
for some constant $a_k$.
\label{rep:isobuilding}

\label{rep:trivcyl} \vskip .1in \noindent A {\em  trivial cylinder or strip} is a map $u: S \to \R \times Z$ mapping to a fiber $Z_y \cong \C^\times$ of the projection to $Y$, with domain $S$ either an $\R \times S^1$
or $\R \times [0,1]$, and mapping to $Z_y $ by an exponential map $(s,t) \mapsto e^{\mu(s + it + c)}$ for some exponential growth constant $\mu$ and constant $c \in \C$.  Any trivial cylinder $u$ is contained in a fiber of $p_Y$ and has exactly 
two punctures.  Indeed, the  action sums at the incoming and outgoing punctures are equal as in Lemma \ref{anglechange} below.

\vskip .1in \noindent  A
holomorphic building $u$ is {\em  stable}  if $u$ has finitely 
many automorphisms, or equivalently the map $u_0$ to 
the component $\XX[k_-,k_+]_0 \cong \ol{X}$ is stable
in the usual sense, and  the map $u_j, j \neq 0$ to each component $\XX[k_-,k_+]_j = \ol{\R \times Z_\pm}$ has at least one connected component $u_v$ that is not a trivial cylinder or strip.
\end{definition} 

In the symplectic field theory literature, it is more common to use punctured holomorphic curves.   Since our Reeb flow induces a circle action on the contact manifold, we can collapse the Reeb trajectories to get closed holomorphic curves with nodes. The following gives a precise correspondence between maps with punctures and maps with nodes. The advantage for us to use the compactified ambient space is that we can use counting formulas from the compactified setting to study rigid buildings. Moreover, we use a transversality scheme similar to Cieliebak-Mohnke \cite{cm:trans} to regularize the moduli spaces which show up in symplectizations and symplectic cobordisms.

\begin{discussion}[Compactifying Holomorphic maps with punctures]\label{discussion:compactifying map}
    Throughout this article and in the sequel \cite{BCSW2,BCSW3}, we often use removal of singularities to extend holomorphic maps with punctures to maps from closed surfaces.  
For holomorphic buildings in manifolds with cylindrical ends modeled on $\R \times Z$, where $Z$ is a circle-fibered contact manifold, as in Definition \ref{disc:compactifying targets} we may compactify $\R \times Z$ to $\ol{\R \times Z}$.  The compactification $\ol{\R \times Z}$ has a symplectic form 
given by viewing it as a symplectic cut of $\R \times Z$ which makes the almost complex structure on $\ol{\R \times Z}$ compatible.  
    
    Any finite-energy holomorphic map may be extended over the punctures.  For an incoming puncture $p$, assume that under a choice of cylindrical coordinates $(s,t)$ of the domain near $p$, a map $u(s,t)$ is asymptotic to a trivial strip or a cylinder $\gamma$ as $s \to \infty$. Note that $\gamma$ descends to a point $q\in Y$.  The map $u$ extends over $p$ by compactifying the target to $\ol{\R \times Z}$ and defining $u(p) = q \in Y_\infty$. Following \cite[Lemma 3.10]{chanda} one can check that if $u$ is a map in $\R \times Z$ of finite Hofer energy, the compactification of $u$ has finite $\omega_{\sigma_1,\sigma_2}$-area,
    where $\omega_{\sigma_1,\sigma_2}$ is the form from \eqref{disc:compactifying targets}
\end{discussion}

\subsection{Treed holomorphic buildings}
\label{sec:treedbuildings}

Treed disks, also known as cluster configurations, were introduced
in the context of Lagrangian Floer homology by Cornea-Lalonde \cite{cornea:cluster}.
The domains for treed disks are combinations of disks with
strip-like or cylindrical ends and line segments.   

\begin{definition} \label{def:brokeninterval}
A {\em  closed interval} is a closed connected subset of 
$ \{ - \infty \} \cup \R \cup \{ \infty \}$.  
Let $I_1,\ldots, I_k$ be a sequence of closed intervals so that ${I}_1,\ldots, I_{k-1}$ contain $\infty$,
and ${I}_2,\ldots, {I}_k$ contain $- \infty$.
A {\em  broken interval} is a topological space $I$ obtained from the union of 
closed intervals ${I}_1,\ldots, {I}_k$ by identifying 
$\infty$ in one interval with $-\infty$ in the next:
\[ I = I_1 \cup_{\infty \sim - \infty} I_2 \cup_{\infty \sim - \infty} \ldots \cup_{\infty \sim - \infty} I_k .  \]
The images of the identified points $\pm \infty$ in each subinterval are called {\em  breakings}, while the 
boundary points of $I$ are {\em  ends} and may be either
infinite or non-infinite.    The standard metric of the copy of real line $\R$ in each $I_j$  induces a metric on the complement in $I$ of the breakings. 

\vskip .1in \noindent
A {\em  treed disk} $C$ is obtained from a nodal disk $S$ by replacing
each node $w_\pm(e) \in  S$ or marking $w(e) \in \partial S$
with a broken interval $T_e$; in the case of 
a marking the corresponding broken interval $T_e$
is required to have one infinite end and be joined to $S$ at
the finite end; while in the case of a node both ends of the broken interval are required to be finite (although the length of the broken interval may be infinite).
\label{rep:brokenintervals}

\vskip .1in \noindent 
The {\em  combinatorial type} $\Gamma$ of any treed disk is the graph 
\[ (\Ver(\Gamma) = \Ver_\white(\Gamma) \cup \Ver_\black(\Gamma), \quad \Edge(\Gamma) = \Edge_\white(\Gamma) \cup \Edge_\black(\Gamma))
\]
whose vertices correspond to disk and sphere components, respectively, and whose
edges correspond
to boundary resp. interior edges.  The set of semi-infinite edges is denoted
\[ \Edge_{\rightarrow}(\Gamma) \subset \Edge(\Gamma) .\]
The type $\Gamma$ has the  additional data of subsets 
\[ \Edge_0(\Gamma), \Edge_\infty(\Gamma) \subset \Edge(\Gamma) -
\Edge_{\rightarrow}(\Gamma) \]
describing which edges have zero or infinite length.  \label{rep:zeroor}  That is, $\Edge_\infty(\Gamma)$ consists
of those edges that are broken intervals, while $\Edge_0(\Gamma)$ consists of edges of zero length 
that are collapsed under the equivalence relation $\sim$ in \eqref{edgecollapse}. See \cite[Section 4.2]{flips} for more discussion of the combinatorial type.  Any treed disk 
\[ C = \bigcup_{v  \in \Ver(\Gamma)} S_v \cup 
\bigcup_{e \in \Edge(\Gamma)} T_e \] 
is the union of disks and spheres $C_v$ 
for  the vertices $v \in \Ver(\Gamma)$ and interior and boundary edges 
$T_e$ for the edges $e \in \Edge(\Gamma)$.

A treed disk $C$ is {\em  stable} if the surface components $S_v$ with markings given by $S_v \cap T$ is the union of stable disks 
and spheres  and  each broken interval $T_e$ has at most one breaking.  This ends the Definition.
\end{definition}

 \begin{figure}[ht]
     \centering
     \scalebox{.7}{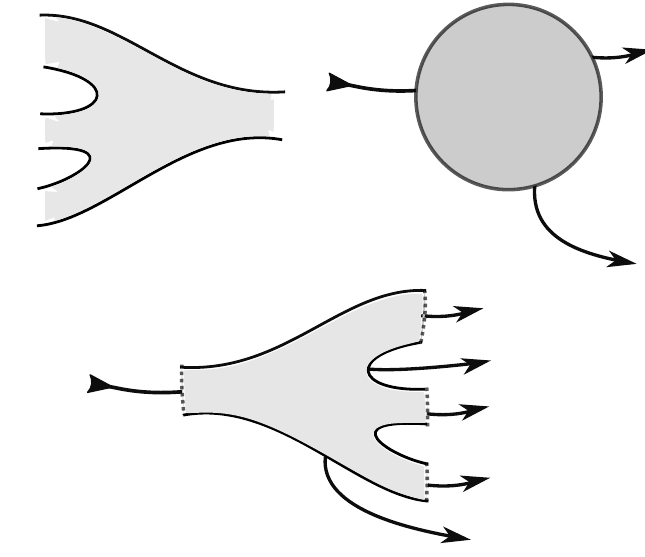}
     \caption{A punctured disk and two treed punctured disks}
     \label{treevs}
 \end{figure}

The moduli space of stable treed disks with a fixed number of punctures has
the structure of a compact Hausdorff space as in Charest-Woodward \cite{flips}.
Let $\M_\Gamma$ denote the moduli space of treed bordered surfaces of
type $\Gamma$.   If $\Gamma $ is a type with an edge $e \in \Edge_0(\Gamma)$, we declare $\Gamma$  to be equivalent to the type  $\Gamma'$ obtained by removing the edge $e$ and identifying the adjacent vertices $v_\pm \in \Ver(\Gamma)$.   Denote by 
\begin{equation} \label{edgecollapse} \ol{\M} = \sqcup_\Gamma \M_\Gamma / \sim \end{equation}
the union over 
stable
types $\Gamma$, possibly disconnected, modulo the edge-collapse equivalence relation.   The space 
$\ol{\M}$ has a natural Hausdorff topology, induced by convergence of the surface components in the moduli space of marked disks and convergence of the edge lengths, as in Cornea-Lalonde \cite{cornea:cluster}.  Denote by
$\M \subset \ol{\M}$ the subset of strata of top dimension, 
consisting of configurations $C$ so that 
each edge $T_e$ has non-zero length $\ell(e)$ in $(0,\infty)$.
 The set of types has a natural partial order with $\Gamma'
 \preceq \Gamma$ if and only if $\M_{\Gamma'}$ is contained in the
 closure of $\M_\Gamma$. 

We will also consider domains with components labelled by integers, used to describe holomorphic buildings in the discussion below. 

\begin{definition} 
A genus zero {\em  building} is a nodal disk $S$ equipped with a
partition 
\begin{equation} \label{treedbuilding} 
 S = S_1 \cup \ldots \cup S_k \end{equation} 
into (possibly disconnected) components $S_1,\ldots, S_k \subset S$ called {\em  levels} so that  intersection
between two levels $S_i \cap S_j$ is empty unless $i = j \pm 1$. 
Removing
the intersections $S_i \cap S_{i \pm 1}$ gives a surface with
cylindrical and strip-like ends

\vskip .1in \noindent   A  
 {\em  treed building} is a treed disk $C$ equipped with a decomposition 
 \[ C = C_1 \cup \ldots \cup C_k \] 
 so that $C_i,C_j$ intersect only if $j \in \{ i-1,i,i+1 \}$.    See Figure \ref{tbuilding}.

\begin{figure}
    \centering
\scalebox{.6}{  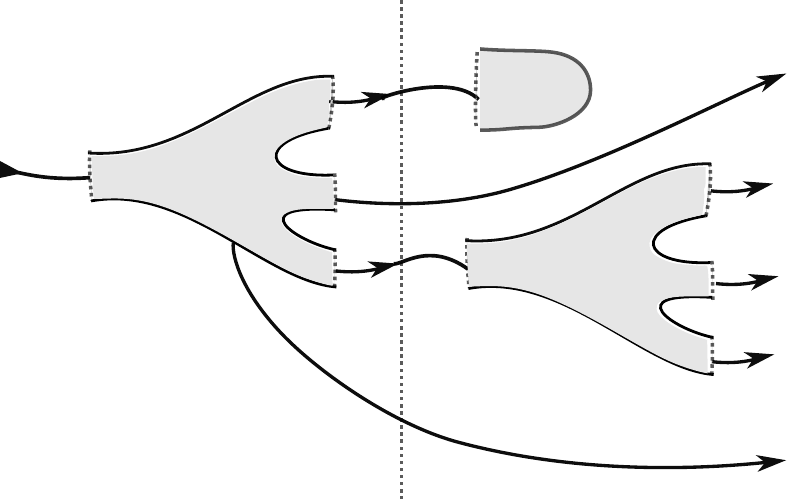}
    \caption{Treed building}
    \label{tbuilding}
\end{figure}

\vskip .1in \noindent 
The {\em  combinatorial type} of a
treed building is a graph 
\[ \Gamma = (\Ver(\Gamma), \Edge(\Gamma)) \]
equipped with a decomposition into
not-necessarily-connected subgraphs
\[ \Gamma = \Gamma_1 \cup \ldots \cup \Gamma_k \] 
so that $\Gamma_i \cap \Gamma_j$ is empty unless $j \in \{i -1, i , i+1 \} $.
 \end{definition}

Treed holomorphic buildings 
 in symplectic cobordisms are obtained by adding trajectories of the following kind.
 Let $Z$ be a fibered contact or stable Hamiltonian manifold, and $\Lambda \subset Z$ a Legendrian.  We assume that a metric is fixed on $\Lambda$,
 hence on $\cR(\Lambda)$ using the identification of components of $\cR(\Lambda)$ with $\Lambda$ in 
 Lemma \ref{startend}.
 
\label{treeddisk}

\begin{definition} \label{morsedatum} 
A {\em  Morse datum} for $(Z,\Lambda)$ consists of a pair  of vector fields on the space of Reeb chords and on the Legendrian 
\[ \zeta_\white \in \Vect({\cR}(\Lambda)),  \quad  \zeta_{\black,\Lambda} \in \Vect(\R \times \Lambda)^{\R} \] 
arising as follows.
\begin{enumerate} 
\item  There exists a  Morse function on the space of Reeb chords
\[  f_\diam:  {\cR}(\Lambda) \to \R ;\] 
so that $\zeta_\diam$ is the gradient vector field:
\begin{equation} \label{zdiam} 
\zeta_\white := \grad(f_\diam) \in \Vect({\cR}(\Lambda)) .\end{equation}
\item There exists a Morse function 
\[ f_\black :   \Lambda \to \R ;\] 
with gradient vector field 
\[ \grad(f_\black) \in \Vect(\Lambda) \] 
so that $\zeta_{\black,\Lambda}$  is a translation-invariant lift of $\grad(f_\black)$\label{zwhite}.
\end{enumerate}
\end{definition}

\begin{definition}  A vector field $\zeta_{\black,\Lambda} \in \Vect(\R \times \Lambda)$ is {\em  positive} if in coordinates
$(s,\lambda)$ on $\R \times \Lambda$ there exists a function
\[ a: \Lambda \to \R_{> 0} \]
so that 
\begin{equation} \label{zetablack} \zeta_{\black,\Lambda} = a(\lambda) \partial_s + {p}^* \grad(f_\black) 
\end{equation}
where 
\[ {p}^* : \Vect(\Lambda) \to \Vect(\R \times \Lambda)^\R  \] 
is the obvious identification of
translationally-invariant vector fields trivial in the $\R$-direction
with 
vector fields on $\Lambda$.
\end{definition}

The limit of any Morse trajectory on a compact manifold \label{rep:compact} along any infinite length trajectory is a zero of the gradient vector field.   We introduce labels for the possible limits of the trajectories above as follows. 
Denote by \label{rep:ulR}
\[ \ul{\R} =  (T\R) \times \Lambda \] 
the translational factor in
$T(\R \times \Lambda) = T\R \oplus T \Lambda$.  The  zeroes of the vector field
$p_*(\zeta_{\black,\Lambda})$ correspond to tangencies of $\zeta_{\black,\Lambda}$ with the translational factor:
\[  {p}_*(\zeta_{\black,\Lambda})^{-1}(0) = \zeta_{\black,\Lambda}^{-1}(\ul{\R}) \subset
  \R \times \Lambda . \] 
Let 
\begin{equation} \label{gens} \cI(\Lambda) := 
\cI_\white(\Lambda) \cup \cI_\black(\Lambda), 
\quad  \cI_\white(\Lambda) := \zeta_\white^{-1}(0), \quad \cI_\black(\Lambda) := \grad(f_\black)^{-1}(0)
\end{equation}
be the set of zeroes of these vector fields; these will be the
generators of our Chekanov-Eliashberg algebras.    The inclusion 
of the generators $\cI_\black(\Lambda)$ is similar to the inclusion of the chains on the Lagrangian in the definition of immersed Lagrangian Floer theory, while the generators 
$\cI_\white(\Lambda)$ correspond to the self-intersections.  

Treed holomorphic disks in cobordisms are combinations of holomorphic maps and trajectories of the vector fields above.
Let $(X,L)$ be a cobordism pair with concave end $(Z_-,\Lambda_-)$ and convex end $(Z_+,\Lambda_+)$. 
Suppose $\R \times Z_\pm$ are equipped with cylindrical almost complex
structures $J_\pm$ and $\R \times \Lambda_\pm$ are equipped with vector fields $\zeta_\pm \in \Vect(\R \times \Lambda_\pm)$ as above.  

\begin{definition} \label{def:cylnearinf} Let $X$ be a symplectic
cobordism with concave end $Z_-$ and convex end $Z_+$.
An almost complex structure
\[ J : TX \to TX \]
is {\em   cylindrical-near-infinity} if $J$ restricts
to cylindrical almost complex structure $J_\pm$ on the ends 
\[ \kappa_\pm(\pm (0,\infty) \times Z_\pm) \subset
X.\]

\vskip .1in \noindent
A function resp. gradient vector field
\[ f_L: L \to \R, \quad \zeta_{L} = \grad(f_L) \in \Vect(L) \] 
is {\em  cylindrical near infinity}   if the restriction of the vector field $\zeta_L$ to the cylindrical ends is given by 
\[ \zeta_{L} |_{\kappa_\pm(\pm (0,\infty) \times Z_\pm)} = \zeta_{\black,\Lambda_\pm} \] 
for some vector fields $\zeta_{\black,\pm}$
of the form \eqref{zetablack}.

\vskip .1in \noindent A {\em  decorated cobordism pair}
is a cobordism pair $(X,L)$ together with a pair $ (J,f_L)$ 
as above.   This ends the Definition.
\end{definition} 

In order to use the Cieliebak-Mohnke perturbation, we need pick a Donaldson type hypersurface $D$ in $X-L$. This choice will be discussed in detail in Section \ref{sec:don}. Here we only use it to define combinatorial types of treed holomorphic disks. For notational convenience,  we set \label{rep:zetad}
\[ \zeta_D = 0 \in \Vect(D) \]

\begin{definition} \label{def:treedbuildings} Let $(X,L)$ be a decorated cobordism pair with concave end 
$(Z_-,\Lambda_-)$ and convex end $(Z_+,\Lambda_+)$ as above, so that the Lagrangians
$\Pi_\pm = p_\pm(\Lambda_\pm)$ are embedded. 

\vskip .1in \noindent  
A {\em  treed holomorphic disk} from a treed disk $C$ to $X$ is a
  collection of holomorphic maps and trajectories
\[  \begin{array}{ll}
 u_v: & S_v \to X \quad   v \in \Ver(\Gamma) \\
 u_{e}: & T_e \to \begin{cases} L &  e \in \Edge_L(\Gamma) \\ 
\cR(\Lambda_\pm) &  e \in \Edge_{\white,\pm}(\Gamma) \\ 
 D &  e \in \Edge_D(\Gamma)  \end{cases}
\end{array} \]
for the vector fields $\zeta_{L}, \zeta_{\white,\pm},\zeta_D$
for some partition of the edges $\Edge(\Gamma)$
into subsets 
\[ \Edge_D(\Gamma), \Edge_L(\Gamma),\Edge_{\white,\pm}(\Gamma) \subset \Edge(\Gamma)  \] 
so that the 
boundary conditions 
\[ u_v (\partial S_v) \subset L, \quad \forall v \in \Ver_\white(\Gamma) \] 
are satisfied and matching conditions hold at the end of each edge according to
\[ \lim_{r \to 0}(\pi_{Z_\pm} u_v(r e^{i \theta})) = u_{e}(z), \quad \forall z \in T_e \cap \ol{S}_v ,  \quad \forall e \in
\Edge(\Gamma), \ v \in \Ver(\Gamma) \]  
for punctures mapping to Reeb chords, and 
\[ \lim_{r \to 0}(\pi_{Z_\pm} u_{v_-}(r e^{i \theta})) = \lim_{r \to 0}(\pi_{Z_\pm} u_{v_+}(r e^{i \theta})) \] 
for components $u_{v_-}, u_{v_+}$ separated by a puncture corresponding to a Reeb orbit, 
where the local charts near the puncture are chosen compatibly with the matching isomorphism \eqref{eq:me}.

We further require, for our regularization scheme, that each component of $u^{-1}(D)$
is the image of an edge $e \in \Edge_D(\Gamma)$.  We denote by $T_D$ resp. $T_L$ resp. $T_{\white,\pm} \subset T$ the union of the  edges $T_e$ with $e \in \Edge_D(\Gamma), \Edge_L(\Gamma),\Edge_{\white,\pm}(\Gamma) $.

\vskip .1in \noindent  
A {\em  treed holomorphic building} in
$\XX[k_-,k_+]$ with boundary on $\LL$ is a treed disk $C$ equipped with a decomposition 
\[ C = C_{k_-} \cup \ldots \cup C_{k_+} \]
together with treed disks bounding $L$ or $\R \times \Lambda_\pm$
\begin{eqnarray*} u_j: &  C_j \to \XX[k_-,k_+]_j := \R \times Z_- &  j= k_-,\ldots, -1 \\
u_0 : &  C_0 \to \XX[k_-,k_+]_0 := X &  \\ 
u_j:  & C_j \to \XX[k_-,k_+]_j  := \R \times Z_+ & j = 1,\ldots, k_+ \end{eqnarray*}
satisfying the following.
\begin{enumerate} 
\item ({\rm Balancing conditions}) For any two adjacent  $j,j+1$ there exists
 $\lambda_j \in [0,\infty]$ such that the following holds:   Suppose 
$T_e$ is an edge connecting $C_j$
and $C_{j+1}$ and $u|_{T_e}: T_e \to \cR(\Lambda)_\pm$ is a trajectory 
on the space of Reeb chords. Then the length of $T_e$ is 
\[ \ell(T_e) = \lambda_j , \quad \forall T_e \ \text{connecting} \ C_j,C_{j+1} .\] 
\item ({\rm Matching conditions})  If $u_i,u_{i+1}$ are adjacent levels joined by an edge $T_e = T_{e,i} \cup T_{e,i+1}$
with coordinate $s$ on $T_{e,i}, T_{e,i+1}$ then 
\[ \lim_{ s \to \infty} u_i(s) = \lim_{s \to -\infty} u_{i+1}(s) .\]
This limit should be interpreted as an element of the space $\cR(\Lambda_\pm)$ of 
Reeb chords, if the trajectory represents a trajectory on $\cR(\Lambda_\pm)$, 
or an element of $\Lambda_\pm$  if the trajectory represents a trajectory on $\cR(\Lambda_\pm)$.
\end{enumerate}

\vskip .1in \noindent  An {\em  isomorphism} of treed buildings $u',u''$ 
with $k$ levels is  an isomorphism of domains $\phi: C' \to C''$
(holomorphic on the surface parts and length-preserving on the
segments) together with translations 
\[ \tau_j: \XX[k_-,k_+]_j \to  \XX[k_-,k_+]_j , \;\;j \in 
\{1,\ldots, k \}\] 
so that
\[ u''_v \circ \phi = \tau_j \circ u'_v \] 
for each vertex $v \in \Ver(\Gamma_j)$.   This ends the Definition.
\end{definition}    

We introduce the following notation for moduli spaces with fixed limits along the 
leaves.   

\begin{definition} \label{def:maptypetreed}

\vskip .1in \noindent The {\em  map type} of a treed building $u$ is the decorated graph 
\[ \bGamma = (\Gamma, h,l) \] 
where \label{rep:maptype}
\[ h: \Ver(\Gamma) \to \pi_2(\ol{X},\ol{L}) \]  
(with $\ol{X}$ resp. $\ol{L}$ the compactification of $X = \R \times Z$ resp. 
$L = \R \times \Lambda$)  is the collection of homotopy classes of maps associated to the
vertices and
\[ l : \Edge_{\rightarrow}(\Gamma) \to \cI(\Lambda) \cup \{ D \} \]
%
is the collection of labels on the leaves; that is, the constraints
$l(e) \in \cI(\Pi)$  on constrained edges; or 
$D$ for the edge $T_e, e \in \Edge_D(\Gamma)$, constrained to map to 
$D$

Denote by $\M_{\bGam}(L)$ the moduli space of stable treed holomorphic finite-energy buildings
bounding $L$ of (possibly disconnected) type $\bGam$, and by $\M(L)$ the union over types $\bGam$.  In the case $L = \R \times \Lambda$, we use the notation 
$\M(\Lambda) = \M(\R \times \Lambda).$ 
Let $\M_\Gamma(L)$ denote the union of 
$\M_\bGamma(L)$ over map types $\bGamma$ whose underlying domain type is $\Gamma$.
Denote 
\[ \cI(L) = \zeta_{L}^{-1}(0) = \crit(f_L) \]
denote the zeroes of $\zeta_{L}$. 
Taking the limit of any trajectory along a semi-infinite edge $e$
defines an {\em  evaluation map}
\[ \ev_e: \M_\bGamma(L) \to \cI(\Lambda_-)^{d_-} \cup \cI(L)^d \cup \cI(\Lambda_+)^{d_+} .\]
For any given collection
\[ \ul{\gamma} = (\ul{\gamma}_-, \ul{\gamma}_\black,\ul{\gamma}_+) \in
\cI(\Lambda_-)^{d_-} \times \cI(L)^d \times \cI(\Lambda_+)^{d_+} \]
of generators $\cI(\Lambda_\pm)$ and critical points
$\ul{\gamma}_\black$ in the interior, let 
\[ \M(L, \ul{\gamma}) \subset \M(L) \] 
the moduli space of treed disks with limits along the leaves given by
$\ul{\gamma}$.   If the domain $C$ is disconnected, then we assume
the set of components $C_1,\ldots, C_k$ of the domain is equipped with an ordering and the incoming and outgoing labels $\ul{\gamma}_\pm$ are compatible with the ordering in the sense that the incoming labels for each component are in cyclic order around the boundary  and the labels for the various components are in the 
same order; similarly for the outgoing labels.   Also, the 
incoming labels are ordered before the outgoing labels.   This ends the Definition.
\end{definition}

\begin{remark}  \label{rem:splittype} 
Suppose that $\bGamma$ is a type of holomorphic building, $e \in \Edge(\bGamma)$ is an edge of closed resp. open type, and $\bGamma_1,\bGamma_2$ are the types of buildings obtained by cutting at the edge $e$, corresponding to a Reeb chord 
or orbit in $Z_\pm$.    Thus, if $u: C \to \XX$ is the original building, then $u_1: C_1 \to \XX$ an $u_2: C_2 \to \XX$ are 
buildings with the node replaced by an (interior or boundary) puncture.   Note that 
stability of the type $\bGamma$ does not imply stability of $\bGamma_1$ and $\bGamma_2$, but this will be irrelevant in what follows.   Assuming all moduli spaces are regular then the following is almost immediate from the definitions:
\begin{enumerate}
    \item If $e$ is an open edge, then there is a diffeomorphism
\[ \M_\bGamma(L) \cong \M_{\Gamma_1}(L) \times_{\cR(\Lambda_\pm)} \M_{\Gamma_2}(L) \] 
is the fiber product of the moduli spaces $\M_{\bGamma_1}(L)$ and $\M_{\bGamma_2}(L) $ over the discrete space of zeros of $\zeta_{\Lambda,\pm}$ in $ \cR(\Lambda_\pm)$.

    \item  If $e$ is a closed edge, then there is a diffeomorphism 
    \[ \M_\bGamma(L) \cong (\M_{\bGamma_1}(L) 
    \times_{\cR(Z_\pm)} \M_{\bGamma_2}(L))/S^1     \] 
    is a quotient of the fiber product of the moduli spaces $\M_{\bGamma_1}(\Lambda)$ and $\M_{\bGamma_2}(\Lambda) $ over the space of Reeb orbits $\cR(Z_\pm)$, by the $S^1$ action which simultaneous rotates the asymptotic markers $m_{e_1}, m_{e_2}$ at the markings created by the cut.    
    In particular, if $\dim(\M_{\bGamma_1}(L)) > \dim(\cR(Z_\pm))$
    then $\M_\bGamma(L)$ cannot be expected dimension zero, since the fiber product above cannot be a finite union of $S^1$-orbits. 
\end{enumerate}
\end{remark}

\subsection{Monotonicity assumptions}

We make several assumptions on the cobordism to allow us to 
restrict to Chekanov-Eliashberg differentials counting rational curves with one incoming 
boundary puncture.    These assumptions guarantee that spherical components
do not bubble off in one-dimensional components of the moduli space.  Such bubbling would force us to include Reeb orbits in the set of generators
of the Chekanov-Eliashberg complex.    \label{rep:suchbubbling}

\begin{definition}  \label{def:tamecob}
A fibered contact manifold $(Z,\alpha)$ with base $(Y, \omega_{Y})$ with $\d\alpha = p^*(\omega_{Y})$
is {\em  tame} if for some constant $\tau_Z \ge 1$
and integral symplectic form
\[ \omega_{Y,0} \in \Omega^2(Y,\R), \quad [\omega_{Y,0}] \in H^2(Y,\Z)  \] 
we have
\[   \omega_Y = \tau_Z \omega_{Y,0} \]
and the base is  monotone in the sense that 
there exists a {\em  monotonicity constant} $\tau_Y \ge 3$ so that 
\[ c_1(Y) = \tau_Y [\omega_{Y,0}] \in H^2(Y,\R) .\] 
Note that since $\omega_Y$ is proportional to $\omega_{Y,0}$, the Chern class $c_1(Y)$ of $Y$ does not depend on the choice of the symplectic form $\omega_Y$. For any compact spin embedded Legendrian $\Lambda \subset Z$ with embedded
image $\Pi \subset Y$ we call $(Z,\Lambda)$ a {\em  tame pair}.
\end{definition}

\begin{example}
    For $n>1$ and $k>0$, consider the circle bundle of the holomorphic line bundle $\mathcal{O}(-k)$ over $\C P^n$, which is a fibered contact manifold %
    \[ (Z,\alpha)\to (Y=\C P^n, \omega_Y=k\omega_{FS}). \] 
    It is tame for the choices 
    \[ \omega_{Y,0}=\omega_{FS}, \quad \tau_Z=k, \quad \tau_Y=n+1\geq 3. \]
\end{example}

The following is immediate from the condition that $\tau_Y \ge 3$:

\begin{lemma}\label{rep:Chern}  Suppose that $(Z,\alpha)$ is tame.  For any non-constant $J$-holomorphic map 
$u: C \to Y$ where $C$ is a closed curve, the Chern number of $u$ satisfies
\[ \int_C u^* c_1(Y) \ge 3.\]
\end{lemma}

\begin{lemma} \label{anglechange} 
Let $(Z,\alpha)$ be a fibered contact/stable-Hamiltonian manifold and $\Lambda \subset Z$ a Legendrian.
 Let $u: S \to \R \times Z$ be a punctured surface with boundary mapping to $\R \times \Lambda$ and limiting to
  Reeb chords and orbits $\gamma_e$ on the ends $e \in \mE(S)$ with
  actions 
  $\theta_e \in \R$.   Then the incoming and outgoing 
  actions
  are related by 
\[ \sum_{e \in \cE_+ (S)} \theta_e - \sum_{e \in \cE_-(S)} \theta_e = 
\int_{S} u^* \d \alpha =  
\int_S u_Y^* \omega_Y . \]
\end{lemma}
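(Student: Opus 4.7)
My plan is to derive the identity by Stokes' theorem applied to the one-form $u^*\alpha$ on a truncation of the punctured surface $S$. The idea is that $\alpha$ vanishes on $\R \times \Lambda$, so the Lagrangian boundary contributes nothing, while the contribution from a small curve encircling a puncture recovers the action $\theta_e$ (up to an orientation sign).

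For the setup, I would remove from $S$, for each $\eps > 0$, small open neighborhoods of all the punctures---using the strip-like and cylindrical-end coordinates $(s,t)$, these are $\{ |s| > 1/\eps \} \times [0,1]$ at boundary punctures and $\{ |s| > 1/\eps \} \times S^1$ at interior punctures---yielding a compact surface-with-boundary $S^\eps$. Stokes then gives
\[
  \int_{S^\eps} u^* \d\alpha \;=\; \int_{\partial S^\eps} u^* \alpha,
\]
and $\partial S^\eps$ splits into (i) the part of $\partial S$ lying in $S^\eps$, which maps to $\R \times \Lambda$, and (ii) small end curves $\gamma_e^\eps$ around each removed puncture $z_e$. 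Since $\alpha$ is pulled back from $Z$ to $\R \times Z$ (no $\d s$-component) and $\iota_\Lambda^* \alpha = 0$ by the Legendrian condition, the integrand $u^* \alpha$ vanishes identically on (i).

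The main calculation is to evaluate the puncture-curve integrals $\int_{\gamma_e^\eps} u^*\alpha$ in the limit $\eps \to 0$. Asymptotic convergence of $u$ to the limiting Reeb chord or orbit $\gamma_e$ gives $u(s,t) \to (\phi(s),\gamma_e(t))$ in the end coordinates, with $\int_0^1 \alpha(\dot{\gamma}_e(t))\, \d t = \theta_e$. Tracking the induced boundary orientation (outward-normal-first convention) on the truncation curve, one gets tangent $+\partial_t$ at a positive (outgoing) end where $s \to +\infty$, and $-\partial_t$ at a negative (incoming) end where $s \to -\infty$. Thus $\int_{\gamma_e^\eps} u^*\alpha$ converges to $+\theta_e$ or $-\theta_e$, respectively, producing the asymmetric sum $\sum_{\cE_+(S)} \theta_e - \sum_{\cE_-(S)} \theta_e$.

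Finally, the identity follows from combining this with the relationship between $\d \alpha$ and the base symplectic form: for a fibered contact manifold $\omega_Y = -\curv(\alpha)$, so $\d\alpha = -p^* \omega_Y$ and $u^* \d\alpha = -u_Y^* \omega_Y$; in the stable Hamiltonian case, $\ker \omega \subset \ker \d \alpha$ lets $\d\alpha$ descend to a two-form on $Y$, and the same sort of rearrangement applies. The delicate part of the whole argument is the orientation bookkeeping on the puncture boundaries, since this is precisely what forces the sign difference between the $\cE_+$ and $\cE_-$ sums and pins down the sign of the $\omega_Y$ term; I would write this step out most carefully, cross-checking against a trivial cylinder or strip (where all three terms vanish) and against a single holomorphic disk with one positive puncture (where the identity reduces to the area--period relation for a Maslov-type disk).
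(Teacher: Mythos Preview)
Your approach is correct and essentially identical to the paper's: both apply Stokes' theorem to $u^*\alpha$, use that $\alpha$ vanishes on the Legendrian so the Lagrangian boundary contributes nothing, and identify the remaining boundary terms with $\pm\theta_e$. The only cosmetic difference is that the paper compactifies $S$ to a surface-with-corners $\hat S$ by adding intervals along the strip-like ends and applies Stokes once on $\hat S$, whereas you truncate to $S^\eps$ and pass to the limit; your version is slightly more explicit about the orientation bookkeeping, which the paper leaves implicit.
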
 

\begin{proof} 
  Let $\hat{S}$ denote the compactification of $S$ obtained by adding
  in intervals along each strip-like end, so that $u$ in the statement of the Lemma extends to a map   from $\hat{S}$ to $\R \times Z$. Denote by 
  $u^* \alpha$ the
  pull-back connection form 
  on $u^* Z$.    By Stokes' formula, the difference in incoming and outgoing 
  actions 
  is given by 
\[  \sum_{e \in \cE_+ (S)} \theta_e - \sum_{e \in \cE_-(S)} \theta_e 
 + 
 \int_{\partial S} u^* \alpha   = 
  \int_{\hat{S}} \d u^* \alpha
= \int_{\hat{S}} u_Y^* \omega_Y .\]
The signs follow from our assumption that incoming ends are asymptotic to the convex end and outgoing ends are asymptotic to the concave end. The integral over $\partial S$ vanishes since $\alpha$ restricts to zero on $\Lambda$. 
\end{proof} 

The following corollary is used to show that the count of curves with a single input defines a  differential.

\begin{corollary} \label{onepos} Suppose that $(Z,\Lambda)$ is a tame pair  and $J$ is a cylindrical
  almost complex structure on $\R \times Z$.   Let $u:S \to \R \times Z$
  be a punctured $J$-holomorphic curve with boundary on $\R \times
  \Lambda$.  The sum of the 
  actions of Reeb chords at outgoing
  punctures is smaller  than the sum of the 
  actions 
  at the incoming punctures: 
  \begin{equation} \label{angledecreasing}
  \sum_{e \in \cE_+ (S)} \theta_e \geq  \sum_{e \in \cE_-(S)} \theta_e  \end{equation}
  with equality only if the map $u$ projects to a constant map 
  $u_Y$ to $Y$.  In particular, 
  any non-constant punctured holomorphic curve  $u: S \to  \R \times Z$  has at least one 
  incoming puncture.
\end{corollary}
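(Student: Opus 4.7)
The plan is to deduce the corollary directly from Lemma \ref{anglechange} combined with the positivity of symplectic area for the projected holomorphic curve. First I would apply Lemma \ref{anglechange} to $u$; the boundary contribution $\int_{\partial S} u^*\alpha$ vanishes since $u(\partial S)\subset \R\times\Lambda$ and $\alpha|_\Lambda=0$, and using $\d\alpha = -p^*\omega_Y$ (from the convention $\omega_Y=-\curv(\alpha)$) the identity reduces to
\[
\sum_{e \in \cE_+(S)} \theta_e \; - \; \sum_{e \in \cE_-(S)} \theta_e \; = \; -\int_S u_Y^*\omega_Y,
\]
where $u_Y = p\circ u : S \to Y$ is the projection to the symplectic base.

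Next I would invoke the definition of a cylindrical almost complex structure (Definition \ref{def:cyl}), which tells us that $u_Y$ is $\ol{J}$-holomorphic; since $\omega_Y$ tames $\ol{J}$, the standard positivity of area gives $\int_S u_Y^*\omega_Y \ge 0$, with equality if and only if $u_Y$ is constant. Plugging this into the displayed identity yields the main inequality \eqref{angledecreasing} together with the stated equality criterion. For the final assertion, suppose $u$ is non-constant and assume for contradiction that $\cE_-(S) = \emptyset$. Then $\sum_{e \in \cE_+(S)} \theta_e \le 0$, but each Reeb chord or orbit has strictly positive action, forcing $\cE_+(S) = \emptyset$ as well. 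Hence $S$ has no punctures, $\int_S u_Y^*\omega_Y = 0$, and $u_Y$ is constant; consequently $u$ factors through a single fiber $\R\times Z_y \cong \C^\times$ of the cylinder. Since $\partial S$ lands in the finite union of horizontal lines $\R\times(\Lambda\cap Z_y)\subset \R\times S^1$, the open mapping principle applied to the resulting $\C^\times$-valued map (or equivalently the maximum principle applied to the $\R$-coordinate) forces $u$ to be constant, contradicting the hypothesis.

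The only delicate point is sign and convention bookkeeping: because the paper uses $\omega_Y = -\curv(\alpha)$ together with the reversed sign $\omega = -\d(e^{-s}\alpha)$ for the symplectization, one must carefully check that both the sign of the right-hand side in the identity derived from Lemma \ref{anglechange} and the positivity of the Reeb chord/orbit actions $\theta_e$ come out in a compatible way. Once these signs are settled, the rest of the argument is entirely formal.
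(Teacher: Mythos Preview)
Your proof is correct and follows essentially the same approach as the paper: apply Lemma \ref{anglechange}, use that the projection $u_Y$ is $\ol{J}$-holomorphic so its $\omega_Y$-area is non-negative (with equality iff constant), and then for the last claim rule out the case of no incoming punctures using positivity of the Reeb actions. Your treatment of the final step is in fact more explicit than the paper's: where the paper simply asserts that a non-constant $u$ with constant projection $u_Y$ must have both positive and negative punctures, you spell out the maximum-principle argument in the fiber $\C^\times$ showing that an unpunctured such map is forced to be constant.
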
 

\begin{proof} The claim \eqref{angledecreasing} follows from Lemma \ref{anglechange}.
To prove the last claim, let $u$ be a map as in the statement of the Lemma.
Suppose that $u$ projects to a constant map $u_Y$ but is non-constant in $\R \times Z$.
The equation \eqref{angledecreasing} implies that there are both incoming and outgoing punctures.  On the other hand, if $u_Y$ is non-constant then Lemma \ref{anglechange} implies that there is at least one incoming puncture.
\end{proof}

Now we consider more general cobordisms. The following conditions will be used to guarantee the chain maps induced from cobordisms are well-defined. Recall that $(Z_\pm,\alpha_\pm)$ are fibered contact/stable-Hamiltonian manifolds with base the symplectic manifolds $(Y_\pm, \omega_{Y_\pm})$.  Let $(X,L)$ be a (cylindrical near infinity) cobordism pair with concave end $(Z_-,\Lambda_-)$ and convex end $ (Z_+, \Lambda_+)$ in the sense of Definition \ref{def:lagcob}. The normal bundle to $Y_\pm$ in $\ol{X}$ is the line bundle associated to $Z_\pm$.

\begin{definition}\label{relthomclass} 
Let $\ol{X}, Y_\pm$ be as above.    A {\em  Thom form} for the inclusion $Y_\pm \to \ol{X}$ is a
two-form on $\ol{X}$ given as follows.  Let $\rho:[ 0,\infty) \to [0,1]$ be a compactly supported  bump function  equal to $1$ in an open neighborhood of $0$.  We use the normal bundle neighborhood of $Y_\pm$ to define the corresponding Thom form as 
\[ \on{Thom}_{Y_\pm} =  
\d (\rho \alpha_\pm) \in \Omega^2(\ol{X},\ol{L}) \]
where 
$\Omega^2(\ol{X},\ol{L})$ denotes the space of two-forms on $X$ vanishing on each stratum of $\ol{L}$.
The Thom form is closed and the {\em  Thom class} 
%
\[ [Y_\pm]^\dual := [\on{Thom}_{Y_\pm}] \in H^2(\ol{X},\ol{L}) \] 
is independent of the choice of bump function $\rho$. By the construction, the Thom class can be also viewed as a class in $H^2(\ol{X}-Y_\mp)$. It represents the Poincar\'e dual of the divisor class $[Y_\pm]$.  
\vskip .1in  \noindent 
The {\em  logarithmic Chern class} of $ \ol{X} - Y_\pm$ is 
 \[  c_1^{\on{log}}(\ol{X} - Y_\pm) := c_1(\ol{X} - Y_\pm) - [Y_\mp]^\dual \in H^2(\ol{X} - Y_\pm). \] 
This ends the Definition.
\end{definition}

\begin{figure}

    \centering
    \scalebox{0.8}{
\begingroup%
  \makeatletter%
  \providecommand\color[2][]{%
    \errmessage{(Inkscape) Color is used for the text in Inkscape, but the package 'color.sty' is not loaded}%
    \renewcommand\color[2][]{}%
  }%
  \providecommand\transparent[1]{%
    \errmessage{(Inkscape) Transparency is used (non-zero) for the text in Inkscape, but the package 'transparent.sty' is not loaded}%
    \renewcommand\transparent[1]{}%
  }%
  \providecommand\rotatebox[2]{#2}%
  \newcommand*\fsize{\dimexpr\f@size pt\relax}%
  \newcommand*\lineheight[1]{\fontsize{\fsize}{#1\fsize}\selectfont}%
  \ifx\svgwidth\undefined%
    \setlength{\unitlength}{450bp}%
    \ifx\svgscale\undefined%
      \relax%
    \else%
      \setlength{\unitlength}{\unitlength * \real{\svgscale}}%
    \fi%
  \else%
    \setlength{\unitlength}{\svgwidth}%
  \fi%
  \global\let\svgwidth\undefined%
  \global\let\svgscale\undefined%
  \makeatother%
  \begin{picture}(1,0.5)%
    \lineheight{1}%
    \setlength\tabcolsep{0pt}%
    \put(0,0){\includegraphics[width=\unitlength,page=1]{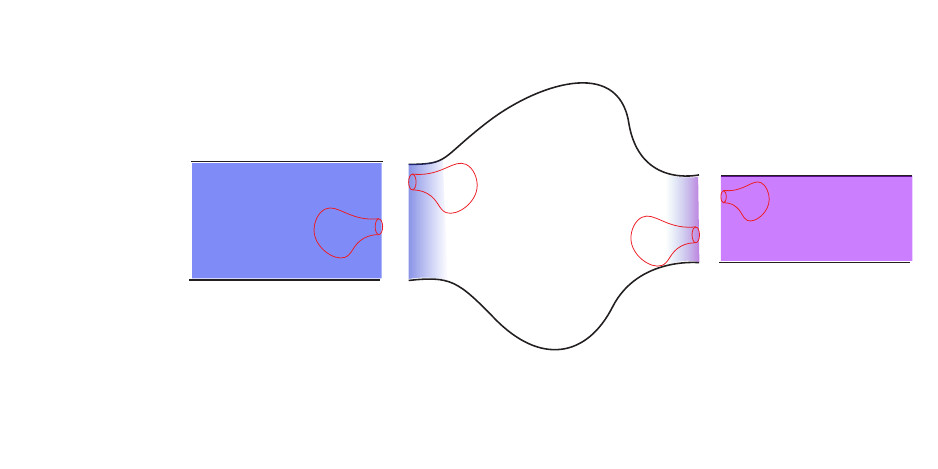}}%
    \put(0.35511813,0.24803149){\makebox(0,0)[lt]{\lineheight{1.25}\smash{\begin{tabular}[t]{l}$I$\end{tabular}}}}%
    \put(0.46141734,0.29372704){\makebox(0,0)[lt]{\lineheight{1.25}\smash{\begin{tabular}[t]{l}$II$\end{tabular}}}}%
    \put(0.68370076,0.24073489){\makebox(0,0)[lt]{\lineheight{1.25}\smash{\begin{tabular}[t]{l}$III$\end{tabular}}}}%
    \put(0.79863515,0.28089238){\makebox(0,0)[lt]{\lineheight{1.25}\smash{\begin{tabular}[t]{l}$IV$\end{tabular}}}}%
    \put(0.55629924,0.08070867){\makebox(0,0)[lt]{\lineheight{1.25}\smash{\begin{tabular}[t]{l}$X$\end{tabular}}}}%
    \put(0.29370079,0.14094488){\makebox(0,0)[lt]{\lineheight{1.25}\smash{\begin{tabular}[t]{l}$\R \times Z_+$\end{tabular}}}}%
    \put(0.81377955,0.15511813){\makebox(0,0)[lt]{\lineheight{1.25}\smash{\begin{tabular}[t]{l}$\R \times Z_-$\end{tabular}}}}%
  \end{picture}%
\endgroup%
}
    \caption{Four types of sphere breakings in cobordisms}
    \label{fig:montonicity_sphere}
\end{figure}

\begin{definition} \label{nonnegcob} \label{def:tamepair}   The pair $(X,L)$ with concave end $(Z_-,\Lambda_-)$ and convex end $(Z_+,\Lambda_+)$ is a {\em  tame cobordism pair} if and only if both ends $Z_\pm$ are tame, and the following conditions hold.

\begin{enumerate}[label={(\bfseries P\arabic*)}]
  \item \label{p1} {\rm (Rationality)} The symplectic classes
   $[\omega_{Y_\pm}] \in H^2(Y_\pm)$ and the class
  $[\ol{\omega}] \in H^2(\ol{X})$ are rational classes and the class
  $[\ol{\omega} |_{\ol{X} - Y_-}] \in H^2(\ol{X} - Y_-)$ is an integral class. 
  Also, the Lagrangian $\ol{L}$ is rational in the sense of
    Definition \ref{rational} below.
\item \label{p2} {\rm (No-cap condition)} The logarithmic first Chern class of $\ol{X} - Y_-$ is a very positive
multiple of the symplectic class on the complement of $Y_-$ in the sense that there exists a constant $\lambda_- > 0$ so that 
\[  c_1^{\on{log}}(\ol{X} - Y_-) = (1 + \lambda_-)[\ol{\omega} |_{\ol{X} - Y_-}] \in H^2(\ol{X} - Y_-) .\]
\item \label{p3} {\rm (Monotonicity for the concave end)}  If the concave end $\Lambda_-$ of the cobordism $L$ is non-empty then the relative Thom class of the divisor $[Y_-]^\dual \in H^2(\ol{X} - Y_+,\ol{L} - Y_+)$ is non-positive in the sense that there exists a constant $\lambda_+  \ge 0$ so that 
 \[  [Y_-]^\dual  =   -\lambda_+ [ \ol{\omega}|_{\ol{X} - Y_+}]  \in H^2(\ol{X} - Y_+,\ol{L} - Y_+) .\] 
\end{enumerate}
\end{definition}

\begin{discussion}[Motivation for Definition \ref{def:tamepair}]

In general, the compactness theorem in symplectic field theory shows that there can be four distinct types of sphere breaking which can occur in compactified moduli space of disks in the cobordism; they are as follows (see Figure \ref{fig:montonicity_sphere}).
\begin{itemize} 
\item A type {I} sphere cannot occur due to maximum-principle or Corollary \ref{onepos}. 
\item Type {III} spheres do not occur by item \ref{p3}; see Lemma  \ref{1cons}.  
\item A dimension counting argument and \ref{p2} will rule out spheres of type {II} and {IV}; 
see  Lemma \ref{nospheres}.
\end{itemize}
A similar set of conditions to prevent sphere bubbles in exact cobordisms is called \textit{good ends} in \cite[Appendix B.2]{Ekholm:rationalsft}. 
\end{discussion}

\begin{example} \label{rep:trivcob} Consider the cobordism $X = \C^n - \{ 0 \} \cong \R \times S^{2n-1}$ from $S^{2n-1}$ to itself.  Let 
 $L$ be the cylinder $ \R \times \Lambda$.  The compactification 
$\ol{X}$ is the blow-up of $\CP^n$ at $0$ with $Y_\pm \cong \CP^{n-1}$.  
We assume that $[\ol{\om}] \in H^2(\ol{X})$ is chosen so that, with $H \in H^2(\CP^{n-1})$ the hyperplane class  (and the same notation in relative cohomology)
\[  [ \ol{\om} ] |_{Y_-} = e^{\sigma_-} H, \quad 
 [ \ol{\om} ] |_{Y_+} = e^{\sigma_+} H \] 
for some $0 < e^{\sigma_-} < e^{\sigma_+}.$
The logarithmic first Chern class is 
\[ c_1^{\on{log}}(\ol{X} - Y_-) = n H \] 
and the outgoing divisor has dual class
\[  [Y_-]^\dual =  -H  \in  H^2(\ol{X} - Y_+,\ol{L} - Y_+).\]
Therefore the constants are 
\[ \lambda_- = \frac{n}{e^{\sigma_+}} - 1, \quad \lambda_+ = \frac{1}{e^{\sigma_-}}. \]
The cobordism pair is tame if $n > 2, e^{\sigma_+} = 1$ and $ e^{\sigma_-} < 1$, and 
for these choices the symplectic class is integral on the complement of $Y_-$.
\end{example}

More generally, a trivial cobordism of a tame contact manifold satisfies \ref{p2}.

\begin{example}\label{ex:trivco}
    As in Definition \ref{def:tamecob}, let $(Z,\alpha)$ be a tame circle fibered contact manifold over a base $(Y,\omega_Y)$. If we cut $\R\times Z$ at level $e^{\sigma_+}= 1/\tau_Z$, then $\ol{\omega}(A)= \omega_{0,Y}(A)$ for any sphere class $A$. On the other hand, we have $c_1^{\on{log}}(\ol{X}- Y_-)(A)=c_1(Y)(A)$. By the tame condition, \ref{p2} is satisfied.
\end{example}



We now discuss our motivating example given by the Harvey-Lawson 
filling.  According to Lemma \ref{asymlem}, the Harvey-Lawson
filling defines a  cylindrical-near-infinity filling $L \subset X$ of  a Legendrian $\Lambda_\eps$ with respect to some stable Hamiltonian triple $(Z,\alpha_\eps,p^*\omega_Y)$ with $Z \cong S^{2n-1}$
and $\Lambda_\eps$ a lift of a perturbation of the Clifford torus.  Although the form $\alpha_\eps$ for which $\Lambda_\eps$ is Legendrian is contact, 
the symplectic form $\omega$ on $X$ is an extension of $\alpha_0$
so that $\d \alpha_0 = p^*\omega_Y$, rather than the perturbed form 
$\alpha_\eps$.  From now on, by the {\em  Harvey-Lawson filling} 
we mean the filling $L$ of $\Lambda_\eps \subset S^{2n-1}$ produced by Lemma \ref{asymlem} applied
to the Harvey-Lawson Lagrangian \eqref{hl}.

\begin{lemma} The Harvey-Lawson filling $L \cong T^{n-2} \times \R^2$
of $\Lambda_\eps \cong T^{n-1} \subset S^{2n-1}$ is tame.
\end{lemma}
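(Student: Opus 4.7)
The plan is to verify the three conditions \ref{p1}, \ref{p2}, \ref{p3} of Definition \ref{nonnegcob} for the straightened Harvey--Lawson filling. First I would apply Lemma \ref{asymlem} to realize $L_{(1)}$ as a cylindrical-near-infinity cobordism pair $(X,L)$. Its unique cylindrical end sits at $|z|\to\infty$, which in the symplectization coordinates $s=-2\log|z|$ on $\C^n-\{0\}\cong\R\times S^{2n-1}$ corresponds to $s\to-\infty$, the large-volume boundary; under the orientation convention of Definition \ref{sympcob}, this is the negative boundary, so $Z_-=S^{2n-1}$, $\Lambda_-=\Lambda_\eps$, while $Z_+=\Lambda_+=\emptyset$.

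Next I would identify the compactification. Collapsing the Hopf $S^1$-action at the $Z_-$ end caps it with $Y_-\cong\CP^{n-1}$, giving $\overline{X}\cong\CP^n$ with $Y_+=\emptyset$ and $[\ol\omega]=H$ equal to a positive multiple of the Fubini--Study hyperplane class after a Moser rescaling.

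The three conditions are then verified in turn. For integrality \ref{p1}, the Fubini--Study classes on $\CP^n$ and on $Y_-=\CP^{n-1}$ are both integral, and $\ol L$ is rational since the disk periods on the Clifford projection $\Pi_\eps\subset Y_-$ are integer multiples of $\tfrac{1}{n}$ by the toric calculation of Example \ref{ex:degree1}. For the no-cap condition \ref{p2}, using Definition \ref{relthomclass} as interpreted in Example \ref{rep:trivcob},
\[ c_1^{\log}(\overline{X}-Y_+) \;=\; c_1(\CP^n) - [Y_-]^{\dual} \;=\; (n+1)H - H \;=\; nH \;=\; n[\ol\omega], \]
so $\lambda_-=n-1$, which is positive for $n\geq 2$. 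Finally, \ref{p3} is vacuous because $\Lambda_+=\emptyset$.

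The main obstacle is aligning the sign and orientation conventions: under the paper's choice $\omega=-d(e^{-s}\alpha)$ the cylindrical end of the HL filling corresponds to $Z_-$ rather than $Z_+$. If instead it were $Z_+$, condition \ref{p3} would require $[Y_+]^{\dual}=-\lambda_+[\ol\omega]$ with $\lambda_+\geq 0$, yet both classes restrict to the positive generator $H$ of $H^2(\CP^n)$, forcing $\lambda_+=-1$ and a contradiction. Once the convention is settled, the remaining cohomological checks are immediate.
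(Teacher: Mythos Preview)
Your proposal is correct and follows essentially the same route as the paper: identify $\overline{X}\cong\CP^n$ with $Y_-=\CP^{n-1}$ and $Y_+=\emptyset$, compute $c_1^{\log}(\overline{X}-Y_+)=(n+1)H-H=nH$ so that $\lambda_-=n-1>0$ for $n\ge 2$, and note that \ref{p3} is vacuous. You add the explicit verification of \ref{p1} and a careful discussion of why the cylindrical end is $Z_-$ rather than $Z_+$, details the paper leaves implicit, but the argument is the same.
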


\begin{proof}  The compactified filling is $\ol{X} = \CP^n$ which has Chern class 
\[ c_1(\ol{X}) = (n+1) [\CP^{n-1}]^\dual = (n+1) [Y_-]^\dual .\]
Hence
\[ c_1(\ol{X}) - [Y_-]^\dual = n [Y_-]^\dual = (1 + \lambda_-) [\ol{\omega}] \] 
for $n \ge 2$.    The concave end $\Lambda_-$ is
empty, so the condition \ref{p3} is vacuously true. Hence $(X,L)$ is a tame cobordism pair.
\end{proof}

\begin{example}
We now give an example of a cobordism that is not tame.
Let $X$ be the blow-up of the unit ball $B_1(0)$ in $\C^n$.
Since the Harvey-Lawson filling $L$ of $\Lambda_\eps$ is disjoint from $0 \in B_1(0)$, it defines a Lagrangian filling
in $X$ as well, denoted with the same notation.
 
\begin{lemma} \label{nottame} 
Let $X,L$ be as above so that $X$ is the blow-up of $B_1(0)$ at $0$.  
The Harvey-Lawson filling $L 
\subset X$ is not tame as a filling.
\end{lemma}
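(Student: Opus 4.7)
My plan is to show that the monotonicity condition \ref{p3} of Definition~\ref{nonnegcob} fails for the pair $(X,L)$.  First I would compactify: the Lerman symplectic cut of $X = \Bl_0 B_1(0)$ along the outer sphere $\partial B_1(0)$ produces $\ol X = \Bl_0 \CP^n$, whose integral second cohomology is the free abelian group $H^2(\ol X,\Z) = \Z[H]\oplus \Z[E]$ generated by the hyperplane class $[H]$ and the exceptional divisor class $[E]$.  In this description the symplectic class takes the form $[\ol\omega] = [H] - \varepsilon[E]$ with $\varepsilon \in (0,1)$ the blow-up parameter.  The outgoing divisor $Y_+$ is the copy of $\CP^{n-1}$ obtained by collapsing the Hopf fibers of $\partial B_1(0)$, so the image of $[Y_+]^\dual$ under the natural map $H^2(\ol X,\ol L)\to H^2(\ol X)$ is a nonzero multiple of $[H]$; in particular its $[E]$-coefficient vanishes.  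Because $L$ is a filling, the incoming Legendrian $\Lambda_-$ is empty, so $Y_- = \emptyset$ and the relative cohomology appearing in \ref{p3} is simply $H^2(\ol X,\ol L)$.

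Next I would exploit the following observation.  Because $\Lambda_+ = \Lambda_\varepsilon \neq \emptyset$, condition \ref{p3} requires the existence of $\lambda_+\geq 0$ with $[Y_+]^\dual = -\lambda_+[\ol\omega]$ in $H^2(\ol X,\ol L)$.  Restricting this identity to $H^2(\ol X)$ and expanding $[\ol\omega] = [H] - \varepsilon[E]$, the coefficient of $[E]$ on the left-hand side is $0$ while the coefficient on the right-hand side is $\lambda_+\varepsilon$.  Hence $\lambda_+\varepsilon = 0$ and, since $\varepsilon > 0$, this forces $\lambda_+ = 0$.  But then the right-hand side is the zero class, while $[Y_+]^\dual$ is a nontrivial multiple of $[H]$, a contradiction.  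Therefore \ref{p3} fails and $(X,L)$ is not tame as a filling.

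The main obstacle is to verify that this contradiction is genuine rather than an artifact of which sign convention one uses for the Thom class $[Y_+]^\dual$.  The coefficient-of-$[E]$ argument above is insensitive to the sign because the $[E]$-coefficient of the image of $[Y_+]^\dual$ vanishes in either convention.  As a further consistency check I would compare with Example~\ref{rep:trivcob}, whose cobordism has \emph{both} $Y_\pm$ non-empty; there, passing to $H^2(\ol X - Y_-)$ kills the class $[H]$ and reduces the identity to a consistent relation between multiples of $[E]$ alone, yielding $\lambda_+ = 1$.  In the Harvey-Lawson filling the absence of an incoming divisor forecloses this cancellation: the $[E]$-contribution to $[\ol\omega]$ introduced by the blow-up has no counterpart on the outgoing side, producing the obstruction above.
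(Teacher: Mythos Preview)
Your argument misidentifies the ends of the cobordism. In this paper's conventions a Lagrangian \emph{filling} of $\Lambda_\eps$ has $\Lambda_- = \Lambda_\eps$ and $\Lambda_+ = \emptyset$; the proof of the preceding lemma (on the Harvey--Lawson filling in $B_1(0)$) says this explicitly: ``the outgoing end $\Lambda_+$ is empty, so the condition \ref{p3} is vacuous.'' The situation here is identical. The only boundary component of $X = \Bl_0 B_1(0)$ is the outer sphere $S^{2n-1}$ --- the exceptional divisor $E$ lies in the \emph{interior} of $X$, not on its boundary --- and the hyperplane $\CP^{n-1}$ added by compactification is therefore $Y_-$, not $Y_+$. (Compare with Example~\ref{rep:trivcob}: there both ends are present, and the exceptional divisor plays the role of $Y_+$ precisely because $0$ has been \emph{removed} from $\C^n$, making the exceptional locus a genuine end; here $0$ has instead been blown up inside the filling, so $E$ is an interior submanifold.) With $Y_+ = \emptyset$ and $\Lambda_+ = \emptyset$, condition~\ref{p3} is again vacuous and your contradiction never arises.

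The paper instead shows that \ref{p2} fails. Viewing $\ol X = \Bl_0\CP^n$ as the $\P^1$-bundle $\pi:\ol X\to\CP^{n-1}$, the fiber class $F$ satisfies $c_1(\ol X)\cdot F = 2$ and $[Y_-]\cdot F = 1$, whence $c_1^{\log}(\ol X - Y_+)\cdot F = 1$. But \ref{p2} combined with the integrality in \ref{p1} would force this number to equal $(1+\lambda_-)\,[\ol\omega]\cdot F$ with $\lambda_->0$ and $[\ol\omega]\cdot F\in\Z_{>0}$, hence to be strictly greater than~$1$. That is the obstruction you should establish.
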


\begin{proof} We check that the condition \ref{p2} fails.  The compactification $\ol{X}$ is the blow-up $\Bl_0 \CP^n$
of $\CP^n $ at $0 \in \C^n \subset \CP^n$.  
The space $\ol{X}$ fibers over 
$\CP^{n-1}$ via a map 
\[ \pi: \Bl_0 \CP^n \to \CP^{n-1} .\] 
The blow-up $\Bl_0 \CP^n \cong \P(\mO(0) \oplus \mO(1))$ is isomorphic to the projectivization of the  sum of the hyperplane and trivial bundles $\mO(1), \mO(0)$.
The Chern numbers of the fibers of the projection  are 
\[ \lan c_1(\ol{X}), 
[\pi^{-1}(\ell)] \ran = 2, \quad 
\forall \ell \in \CP^{n-1} . \] 
The evaluation of the logarithmic Chern class on a
fiber is
\[ \lan  c_1^{\on{log}}(\ol{X} - Y_-), [\pi^{-1}(\ell)] \ran  =
\lan c_1(\ol{X}), [\pi^{-1}(\ell)] \ran -[Y_+]^\dual\cdot [\pi^{-1}(\ell)]= 2 -1 = 1 \] 
which is less than the required bound in \ref{p2}. Note that we view the blow-up as a non-exact filling of the standard contact sphere and $Y_-$ is empty in this example.
\end{proof}

The moduli spaces of holomorphic disks in the example in Lemma \ref{nottame} have boundary components 
where a sphere representing a fiber has bubbled off.  Thus, for a well-defined theory of Legendrian
contact homology in which such fillings define augmentations, one would have to include the Reeb orbits
in the set of generators.  This ends the Example. \end{example}

\begin{lemma} \label{qtop} Let $\Lambda$ be a Legendrian in a tame fibered contact manifold
$Z$.  Suppose that $X = \R \times Z$ is the symplectization of $Z$ and 
%
\[\ol{X} = (\R \times Z)_{[e^{\sigma_-},e^{\sigma_+}]} \cong 
Y \sqcup ( (\sigma_-,\sigma_+) \times Z ) \sqcup Y \] \[ \ol{L} = (\R \times \Lambda)_{[e^{\sigma_-},e^{\sigma_+}]} \cong 
\Pi \sqcup ( (\sigma_-,\sigma_+) \times \Lambda ) \sqcup \Pi \]
(where $Y \cong Y_- \cong Y_+, \Pi \cong \Pi_- \cong \Pi_+)$ is the symplectic cut at $\sigma_-,\sigma_+$.   For choices of $\sigma_-,\sigma_+ \in \R$ so that 
$[e^{\sigma_-}\omega_Y] \in H_2(Y)$ is rational and $[e^{\sigma_+}\omega_Y] \in H_2(Y)$ is integral,
and $e^{\sigma_+} - e^{\sigma_-} \in \Q$, $(X,L)$ is a tame cobordism pair. 
\end{lemma}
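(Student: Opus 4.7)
The plan is to identify the compactification $\ol{X}$ with a $\CP^1$-bundle over $Y$ and then verify the three conditions \ref{p1}, \ref{p2}, \ref{p3} of Definition \ref{def:tamepair} by cohomological calculation.

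First I would recognize $\ol{X} \cong \P(\mO \oplus L_0)$, the projectivization of the sum of a trivial line bundle and the hermitian line bundle $L_0 \to Y$ associated with the principal $S^1$-bundle $Z \to Y$; this comes from Lerman's symplectic cut applied to the Hamiltonian $S^1$-action on $\R \times Z$ whose moment map is $\mu = e^{-s}$, cut at the two levels $\mu = e^{-\sigma_1}$ and $\mu = e^{-\sigma_2}$. The two collapsed sections $Y_-$ (at $s = \sigma_1$) and $Y_+$ (at $s = \sigma_2$) have normal bundles satisfying $c_1(N_{Y_-}) = +\tau_Z[\omega_Y]$ and $c_1(N_{Y_+}) = -\tau_Z[\omega_Y]$; the opposite signs encode the orientation convention in Definition \ref{sympcob}, which is orientation-reversing on the negative boundary.

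Next, setting $h = c_1(\mO_{\P(E)}(1))$, the projective bundle formula gives $H^*(\ol{X};\R) = H^*(Y;\R)[h]/(h^2 + \tau_Z p^*[\omega_Y]\cdot h)$, with $[Y_-] = h + \tau_Z p^*[\omega_Y]$ and $[Y_+] = h$. The relative Euler sequence for the $\CP^1$-bundle combined with $c_1(Y) = \tau_Y[\omega_Y]$ yields
\[
 c_1(\ol{X}) = 2h + (\tau_Y + \tau_Z)\,p^*[\omega_Y].
\]
The symplectic class $[\ol{\omega}]$ is determined by its restrictions $[\ol{\omega}]|_{Y_\pm} = e^{-\sigma_\pm}[\omega_Y]$ (immediate from the form $-\d(e^{-s}\alpha)$ restricted to the slice), which forces
\[
 [\ol{\omega}] = \frac{e^{-\sigma_1} - e^{-\sigma_2}}{\tau_Z}\,h + e^{-\sigma_1}\,p^*[\omega_Y].
\]

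Condition \ref{p1} is then immediate from the hypotheses: integrality of $[\omega_Y]$ follows from tameness of $Z$, and both coefficients of $[\ol{\omega}]$ are integral under the assumed integrality of $e^{-\sigma_i}[\omega_Y]$ together with $e^{-\sigma_2} - e^{-\sigma_1} \in \Z$. For \ref{p2}, I would pass to $H^2(\ol{X}-Y_+)$, where $[Y_+]^\vee = h$ vanishes; direct substitution gives $c_1^{\log}(\ol{X}-Y_+) = \tau_Y\,p^*[\omega_Y]$ while $[\ol{\omega}] = e^{-\sigma_1}\,p^*[\omega_Y]$, so $\lambda_- = \tau_Y e^{\sigma_1} - 1$, which is positive because $\tau_Y \ge 3$ (provided, as is forced by the hypotheses, that $e^{-\sigma_1} < \tau_Y$). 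For \ref{p3}, I would pass to $H^2(\ol{X}-Y_-,\ol{L}-Y_-)$, where $[Y_-]^\vee$ vanishes; the same substitution produces $[Y_+]^\vee = -\tau_Z e^{\sigma_2}[\ol{\omega}]$, so that $\lambda_+ = \tau_Z e^{\sigma_2} \ge 0$.

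The main technical obstacle is pinning down the orientation conventions for the two collapsed ends $Y_\pm$, since the signs of $c_1(N_{Y_\pm})$ drive every subsequent intersection-theoretic calculation; once these are settled, everything reduces to an unwinding of the projective bundle cohomology. A secondary subtlety appears in \ref{p1}, where the integrality of the $h$-coefficient of $[\ol{\omega}]$ requires $\tau_Z \mid (e^{-\sigma_1} - e^{-\sigma_2})$; this is either built into the hypothesis (via an appropriately strong interpretation of integrality of the $e^{-\sigma_i}[\omega_Y]$) or must be imposed as an additional normalization of $\sigma_1, \sigma_2$.
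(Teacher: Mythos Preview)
Your approach is essentially the same as the paper's: identify $\ol{X}$ as the $\CP^1$-bundle obtained from two symplectic cuts and verify \ref{p1}--\ref{p3} by cohomological computation. The paper argues more tersely, using the deformation retraction of $\ol{X}-Y_+$ onto $Y_-$ to reduce \ref{p2} to a check on $Y_-$, and for \ref{p3} writes directly $[Y_+]^\vee = p^*c_1(Z) = -\tau_Z\,p^*[\omega_Y]$; it does not invoke the full projective-bundle ring presentation.

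Where you differ, and usefully so, is in tracking the scaling factors $e^{-\sigma_i}$. The paper's proof of \ref{p2} and \ref{p3} silently normalizes so that $[\ol{\omega}]$ is identified with $p^*[\omega_Y]$, recording $\lambda_- = \tau_Y + \tau_Z - 1$ and $\lambda_+ = \tau_Z$; your values $\lambda_- = \tau_Y e^{\sigma_1} - 1$ and $\lambda_+ = \tau_Z e^{\sigma_2}$ are the unnormalized ones. Your explicit bookkeeping surfaces two points the paper leaves implicit: the constraint $e^{-\sigma_1} < \tau_Y$ for $\lambda_- > 0$, and the divisibility $\tau_Z \mid (e^{-\sigma_1} - e^{-\sigma_2})$ needed for the fibre-area contribution to be integral. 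Neither is a gap in your argument; they are genuine hypotheses that the paper absorbs into its choice of $\sigma_i$ without comment.
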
 
\begin{proof}   We check the rationality
of the symplectic class.  The degree two homology of $\ol{X}$ is generated by the images of classes in $H_2(Y)$ under either embedding $Y \to \ol{X}$, or classes in the image of the map $H(\pi^{-1}(y)) \to H_2(\ol{X})$ induced by inclusion of a fiber $\pi^{-1}(y)$ of $\pi: \ol{X} \to Y$.  This follows by the Leray-Serre spectral sequence (or, more simply, the Gysin sequence). \label{rep:gysin}   The fiber $\pi^{-1}(y)$ over any $y \in Y$ has area $e^{\sigma_+} - e^{\sigma_-}$ which is a rational number by assumption. On the other hand, the degree two homology classes in $\ol{X}-Y_-$ have integral areas by assumption. Hence, \ref{p1} is satisfied. 
Note that \ref{p2} does not involve the Legendrian. A similar argument as in Example \ref{ex:trivco} shows that \ref{p2} is satisfied in this case.

Finally, \label{rep:finally} we check the outgoing end.  We compute
 \[ [Y_-]^\dual  =  p^* c_1(Z)  =  -p^*  [\omega_Y] =   - e^{\sigma_-} [\ol{\omega}] \in 
H_2(\ol{X} - Y_+,\ol{L} - Y_+) .\] 
So \ref{p3} holds with  proportionality constant $\lambda_+ = e^{\sigma_-}$.
\end{proof} 

\begin{remark}
The conditions  in Definition \ref{def:tamepair} 
could presumably be relaxed by dealing with more complicated curve counts; for example, including Reeb orbits in the set of generators would allow relaxation of the second inequality while  presumably allowing all-genus counts would allow relaxation of the first.  However, allowing these possibilities introduces further technical difficulties. 
Note that the conditions here are weaker than the exact conditions in, for example, Chantraine \cite{chantraine:note}.   
However, the condition \ref{p3} rules out the kinds of disks shown in Figure 1 in 
\cite{chantraine:note} which prevent the cobordism maps from being well-defined, since 
there can be no positive area disks with only outgoing punctures. 
\label{rep:chantraine}
\end{remark}

\begin{lemma} \label{1cons}
Let $(X,L)$ be a cobordism satisfying \ref{p3}. Any holomorphic 
punctured 
disk $u: S \to X$ bounding $L$ with an outgoing strip-like end has at least one incoming end.
A similar result holds for holomorphic punctured spheres as well.\end{lemma}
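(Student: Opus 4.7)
The plan is to argue by contradiction using the no-cap condition \ref{p3} together with positivity of intersections of the holomorphic curve with the divisor $Y_+$. Suppose that $u: S \to X$ has at least one outgoing strip-like end and no incoming strip-like ends. We will show this is incompatible with condition \ref{p3}.

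First I would pass to the compactification. The holomorphic curve $u$ has finite Hofer energy, so by the removal of singularities analysis in the circle-fibered setting, $u$ extends to a continuous map $\ol{u}: \ol{S} \to \ol{X}$ from the closed surface $\ol{S}$ to the compactification $\ol{X}$, sending each outgoing puncture into $Y_+$, each incoming puncture into $Y_-$, and the boundary into $\ol{L}$. Moreover, since $J$ is cylindrical and compatible with the projection $p: Z \to Y$, the map is holomorphic across the punctures in the compactified setting, so the local intersection multiplicity of $\ol{u}$ with $Y_+$ at each outgoing puncture is a strictly positive integer (and there are no other intersections with $Y_+$ than those coming from outgoing ends, again by the maximum principle / cylindrical model). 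By the assumption that $u$ has no incoming strip-like ends, $\ol{u}$ avoids $Y_-$ entirely, so we obtain a relative cycle
\[ [\ol{u}] \in H_2(\ol{X} - Y_-, \ol{L} - Y_-). \]

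Next I would pair this class with $[Y_+]^\dual \in H^2(\ol{X} - Y_-, \ol{L} - Y_-)$. Geometrically, since $\ol{u}$ is holomorphic and meets $Y_+$ transversely at the outgoing punctures (or with positive multiplicity otherwise), positivity of intersections gives
\[ \langle [Y_+]^\dual, [\ol{u}] \rangle \;\geq\; \#\{\text{outgoing punctures}\} \;\geq\; 1. \]
On the other hand, using condition \ref{p3}, which states $[Y_+]^\dual = -\lambda_+ [\ol{\omega}]$ with $\lambda_+ \geq 0$, the same pairing equals
\[ \langle [Y_+]^\dual, [\ol{u}] \rangle = -\lambda_+ \int_{\ol{S}} \ol{u}^*\,\ol{\omega}. \]
Since $u$ is non-constant (it has an outgoing puncture) and $J$-holomorphic with respect to a form compatible with $\ol{\omega}$, the symplectic area $\int_{\ol{S}} \ol{u}^*\,\ol{\omega}$ is strictly positive, so the right-hand side is $\leq 0$. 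Combining the two inequalities yields $1 \leq 0$, a contradiction.

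The main subtlety, and the step I expect to require the most care, is justifying that the compactification $\ol{u}$ defines an honest relative cycle in $H_2(\ol{X} - Y_-, \ol{L} - Y_-)$ and that positivity of intersections applies at the outgoing punctures. The circle-fibered structure and the cylindrical nature of $J$ near the ends imply that in local holomorphic coordinates centered at each outgoing puncture, $\ol{u}$ extends as a non-constant holomorphic map transverse (or tangent with positive order) to $Y_+$, which is a smooth complex hypersurface of $\ol{X}$; this is enough to run the positivity-of-intersections count. Once the relative homology class is well-defined and the intersection pairing computed both geometrically and cohomologically, the contradiction is immediate, proving that any such $u$ must have at least one incoming strip-like end.
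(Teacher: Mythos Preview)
Your proof is correct and follows essentially the same approach as the paper: argue by contradiction, pair the relative class $[\ol{u}]$ with $[Y_+]^\dual$, and compare the geometric count (positive, since there is an outgoing puncture) with the cohomological expression $-\lambda_+ A(u) \leq 0$ coming from \ref{p3}. Your version is in fact more carefully stated than the paper's—in particular you make explicit the positivity-of-intersections step that the paper leaves implicit, and your argument works uniformly even when $\lambda_+ = 0$.
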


\begin{proof}  We argue by contradiction using positivity of area. We only deal with the case of disk since the same argument works for the case of spheres as well.
Let $u: S \to X$ be a punctured disk bounding $L$ with only outgoing ends.  After compactifying $X$ to $\ol{X}$, the intersection number of the map $u$ with the outgoing divisor $Y_-$ is 
\[ [\ol{u}] \cdot [Y_-]  = -[\ol{u}] \cdot \lambda_+ [\ol{\omega}]  = -  \lambda_+ A(\ol{u}) \leq 0 .\]
Since $u$ is holomorphic with non-positive area, $u$ must be constant. By definition $u$ maps to the interior $X$, the map $u$ has no outgoing ends, which is a contradiction.  
\end{proof} 

We now explain the meaning of the no-cap condition \ref{p2}.
By a {\em  rigid holomorphic sphere} we mean a map 
$u: S \to X$ from a (possibly nodal) sphere $S$ of some type $\bGamma$ 
which lies in a moduli space of holomorphic maps
of expected dimension zero.  In section \ref{fredtheory}
we show that the moduli spaces could be made regular by suitable perturbations, so that in fact such a map is rigid 
if and only if it is isolated.  \label{rep:rigid}

\begin{lemma}  \label{nospheres} Suppose $(X,L)$ is a tame cobordism.  There are no rigid holomorphic spheres $u: S \to X$ with a single puncture $z \in \ol{S}$ constrained to map to a fixed Reeb orbit over $Y_\pm$.
\end{lemma}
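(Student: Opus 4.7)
The plan is to extend the punctured sphere to a closed pseudoholomorphic sphere in the compactification $\ol{X}$ and derive a numerical contradiction between the no-cap condition \ref{p2} and the hypothetical vanishing of the expected dimension. A $J$-holomorphic sphere $u:S\to X$ with a single puncture asymptotic to an $m$-fold cover of the Reeb fiber through $y\in Y_-$ compactifies to a closed $J$-holomorphic sphere $\ol{u}: \CP^1 \to \ol{X}$ sending the puncture to $y$ and meeting $Y_-$ only there with intersection multiplicity $m$. Since $Y_\pm$ are almost complex submanifolds and $\ol{u}$ avoids $Y_+$, positivity of intersections yields
\[ [\ol{u}]\cdot[Y_-] = m, \qquad [\ol{u}]\cdot[Y_+] = 0. \]

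The next step is to apply the no-cap hypothesis \ref{p2}. Since $c_1^{\on{log}}(\ol{X}-Y_+) = c_1(\ol{X}) - [Y_-]^\dual$ and $[\ol{u}]$ lies in the image of $H_2(\ol{X}-Y_+) \to H_2(\ol{X})$, pairing \ref{p2} with $[\ol{u}]$ gives
\[ c_1(\ol{X})[\ol{u}] - m \;=\; (1+\lambda_-)\,A(\ol{u}), \qquad A(\ol{u}) := [\ol{\om}][\ol{u}]. \]
Non-constancy of $\ol{u}$ forces $A(\ol{u})>0$, and integrality \ref{p1} upgrades this to $A(\ol{u})\in \Z_{\geq 1}$. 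Combined with $\lambda_->0$ and the integrality of $c_1(\ol{X})[\ol{u}]$, this yields the key arithmetic inequality
\[ c_1(\ol{X})[\ol{u}] - m \;\geq\; 2. \]

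Finally I would compare to the virtual dimension. The moduli space of closed $J$-holomorphic spheres in class $[\ol{u}]$ with one marked point, modulo the stabilizer of a marked point in $PSL(2,\C)$, has real dimension $2n + 2c_1(\ol{X})[\ol{u}] - 4$. Constraining $\ol{u}(\zeta_0) = y$ imposes real codimension $2n$, and tangency of order at least $m$ to $Y_-$ at the marked point adds a further complex codimension $m-1$, i.e.\ real codimension $2(m-1)$. Subtracting gives
\[ \text{expected dim} \;=\; 2n + 2c_1(\ol{X})[\ol{u}] - 4 - 2n - 2(m-1) \;=\; 2\bigl(c_1(\ol{X})[\ol{u}] - m\bigr) - 2 \;\geq\; 2, \]
which is strictly positive, so no such map can be rigid.

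The main technical point to justify is the identification of moduli of punctured curves asymptotic to a fixed Reeb orbit with moduli of closed spheres in $\ol{X}$ with the stated tangency constraint, together with the validity of the Riemann--Roch dimension count in the circle-fibered Morse--Bott setting; both will be provided by the compactness and transversality results developed in Section \ref{foundsec}. Everything else reduces to positivity of intersections and the arithmetic above.
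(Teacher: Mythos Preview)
Your proposal is correct and follows essentially the same route as the paper. Both arguments compactify to a closed sphere $\ol{u}:\CP^1\to\ol{X}$, compute the expected dimension of the moduli of spheres with a marked point constrained to a fixed $y\in Y_-$ with tangency order $m$, and use \ref{p2} to rewrite $c_1(\ol{X})[\ol{u}]-m=(1+\lambda_-)A(\ol{u})$ to force this dimension to be positive. The paper's computation $2c_1(\ol{X})[\ol{u}]-2-2m = 2(1+\lambda_-)A(\ol{u})-2>0$ is exactly your final line, just stated more tersely with a reference to the Cieliebak--Mohnke index formula rather than your explicit breakdown into closed-sphere dimension, point constraint, and tangency constraint. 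Your use of integrality \ref{p1} together with $c_1(\ol{X})[\ol{u}]\in\Z$ to upgrade $(1+\lambda_-)A(\ol{u})>1$ to $\geq 2$ is a nice touch; the paper implicitly needs $A(\ol{u})\geq 1$ for the same reason but does not spell it out.
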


\begin{proof}  {\em Case:  There is a single incoming puncture.}
Consider a punctured sphere $u:S \to X$ of type $\bGamma$ whose closure $\ol{u}$ meets the divisor $Y_+$ but not the divisor $Y_-$, and $\ol{u}$ its extension to a map to $\ol{X}$.  Recall that if the map $u$ was asymptotically close to a trivial cylinder over an $m-$fold cover of a simple Reeb orbit, then $\ol{u}$ intersects the divisor $Y_+$ with multiplicity $m$.  The dimension of the moduli space $\M_{\bGamma}(X)$ of spheres constrained to intersect $Y_+$ at a single puncture with multiplicity 
  \[ m = ( \ol{u}_* [\P^1] , [Y_+]^\dual)= \ol{u}\cdot [Y_+] \] 
is (similar to Cieliebak-Mohnke \cite[Proposition 6.9]{cm:trans}) 
\begin{eqnarray*} \dim \M_{\bGamma}(X)&=& 2( c_1(\ol{X}-Y_-), \ol{u}_* [\P^1] ) + 2n - 4 - \underbrace{2n -  2(m-1)}_{\text{ fixed Reeb orbit constraint}} \\ &=& 2( c_1(\ol{X}-Y_-), \ol{u}_* [\P^1] )  - 2 - 2m \\
 \\ &=& 2( c_1^{\on{log}}(\ol{X}-Y_-), \ol{u}_* [\P^1] )  - 2  \\
  &=& 2((1 + \lambda_-) [\ol{\omega}], \ol{u}_* [\P^1] )  - 2  > 0 . \end{eqnarray*}
Here we used that $X$ satisfies \ref{p2}. Hence such spheres cannot be rigid. 

\vskip .1in \noindent  {\em Case:  There is a single outgoing puncture.}
Let $u: S \to X$ have a single outgoing puncture asymptotic to the concave end of $X$, and with no incoming punctures then by \ref{p3}
\[  0 > -  [Y_-] \cdot [\ol{u}] = \lambda_+ [ \ol{\omega}] \cdot [\ol{u}]  .\] 
\noindent This contradicts the fact that $\ol{u}$ is holomorphic and non-constant.
\end{proof}

\begin{remark}
    The same statement holds, with the same proof, if we replace $X$ by $\R\times Z_\pm$, since our contact manifolds are tame. Therefore, we can exclude the occurrence of spheres of type {II}, {III}, {IV} in Figure \ref{fig:montonicity_sphere} in rigid holomorphic buildings.
\end{remark}

\begin{discussion} \label{disc:nosphere}
    We use Lemma \ref{1cons} to stop sphere breaking to happen in the SFT limit of a one-dimensional moduli space of treed holomorphic disks in the proofs of Theorems \ref{twolevels2} and \ref{twolevels3}. We sketch the idea of the proof here. The Lemma gives that there is no rigid treed holomorphic building of a fixed domain type which contains a sphere bubble. Indeed, any treed disk with a sphere bubble will have a terminal `leaf sphere' which is a sphere with a single puncture which is asymptotic to $Z_\pm$. Lemma \ref{1cons} shows that any treed disk with such a terminal sphere can not be rigid since the moduli space of sphere with a puncture with a point constraint on the divisor $Y_+$ (along with multiplicity) is positive dimensional. 

\end{discussion}

\subsection{Homological invariants}

In the definition of contact homology we will work with coefficient rings
that are completed group rings for various homology groups. 
The standard in the field is to use a group ring on first homology and assign homology classes to punctured disks by closing up the Reeb chords
at infinity.   
Denote by 
\[ \Lambda_1,\ldots, \Lambda_k \subset \Lambda \]
the connected components of the Legendrian $\Lambda$.
For each connected component $\Lambda_i$ of $\Lambda$ choose
  a base point 
  \[ {\lambda}_i \in \Lambda_i, \quad i = 1,\ldots, k .\]   

\begin{definition} \label{boundclass}  
\begin{enumerate}
    \item {\rm (Capping paths)} Let
  $\gamma: [0,1] \to Z$ be a Reeb chord. A {\em  capping path} for $\gamma$ is a path
  \begin{equation} \label{cappath}
 \hat{\gamma}_b: [0,1] \to \Lambda, \quad \hat{\gamma}_b(0) =
 \gamma(b), \quad \hat{\gamma}_b(1) = {\lambda}_i\end{equation} 
 for any $b \in \{ 0, 1 \}$.
\item {\rm (Boundary cycles)}
Let  $u: S \to \R \times Z$ be a holomorphic map bounding $\R \times \Lambda$ with limits on the strip-like ends given by the
collection of Reeb chords 
\[ \ul{\gamma} = (\gamma_0,\ldots, \gamma_k) \in \cR(\Lambda)^{k+1} \] 
with capping paths $\hat{\gamma}_{i,b}, i = 0,\ldots, k+1.$
The boundary $\partial S$ is divided into components 
\[ \partial S = (\partial S)_0 \cup \ldots \cup (\partial S)_k \] 
by the strip-like ends.  The restriction 
$(\partial u)_m$ of $u$ to $(\partial S)_m$ after projection to $\Lambda$ 
concatenates with the capping paths for chords in $\ul{\gamma}$ to
produce a sequence of {\em  boundary cycles}  
\[ \ol{\partial u}_m := \hat{\gamma}_{m+1,0} \circ (\partial u)_m \circ \hat{\gamma}_{m,1}^{-1}  : S^1 \to \Lambda_{i(m)}, m = 0,\ldots, k \] 
in the components $\Lambda_{i(m)} $. 
\item {\rm (Total homology)}  The total homology class of the
combined boundary cycles is denoted
\begin{equation} \label{cappedoff} [\partial u] = \sum_k
  [ \ol{\partial u}_j ] \in H_1(\Lambda) .\end{equation}
\end{enumerate} 
This ends the Definition.
\end{definition}

\begin{figure}[ht]
    \centering
    \scalebox{.5}{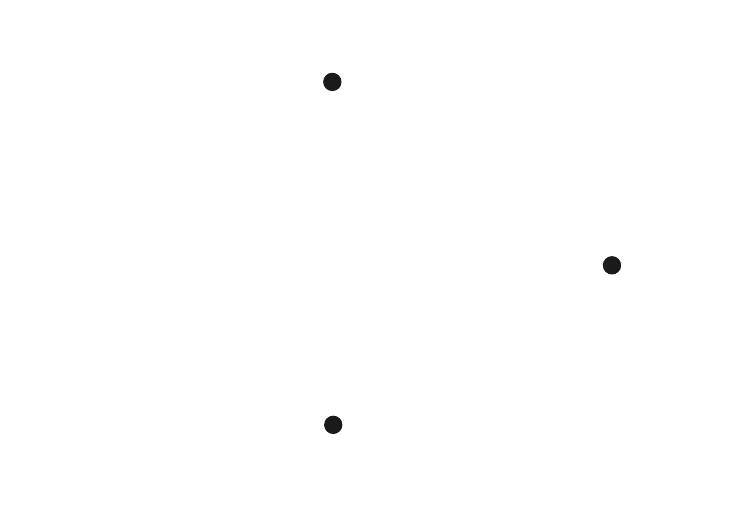}
    \caption{Capping paths to make a cycle}
    \label{capping}
\end{figure}

\begin{remark} 
Similarly, in the case that the Legendrian is connected and the contact manifold is simply-connected, second homology classes are associated to punctured disks by a capping procedure.
For each Reeb chord 
$\gamma$, the loop obtained 
by concatenating $\gamma, \hat{\gamma}_0, \hat{\gamma_1}$ is the boundary 
of some continuous map  $v_\gamma: S_\gamma \to Z$ from a disk 
$S_\gamma$ to $Z$ by the assumption that $Z$ is simply-connected.  The homology class obtained
from a punctured disk $u :S \to Z$ by gluing in the disk $v_\gamma$
at each Reeb chord or orbit is denoted 
\begin{equation} \label{cappedoff2} [u] = \left[ u_*(S) + \sum_\gamma 
v_{\gamma,*} S_\gamma \right] \in H_2(Z,\Lambda).\end{equation}
This ends the Remark.  \end{remark}

Finally,  we define homology classes of disks in cobordisms. 

\begin{definition} Let $(X,L)$ be a symplectic cobordism.  Let $\ol{L}$ denote the closure of $L$ in the manifold with boundary $\ti{X}$.
For each component $\ti{L}_i$ of the cobordism $\ti{L}$ choose a choice of base point
${\lambda}_i$
in each component and a collection of paths $\hat{\gamma}_b, b \in \{ 0,1 \}$ from the base point 
to the start and end points of any Reeb chord $\gamma$.  
Let  $u: S \to X$ bounding $L$ 
punctured holomorphic curve.  By concatenating with the disks 
$v_{\gamma_b}$ and paths 
$\hat{\gamma}_b$ for each strip-like end of $S$ one obtains homology
classes  denoted 
\[  [\partial u] \in H_1( \ti{L}) \cong H_1(L) . \]   
If $L$ is connected and $X$ is simply connected, one obtains 
second homology classes  
\[ [u ] \in H_2(\ti{X}, \ti{L}) \cong H_2(X,L) \] 
by a choice
of capping disk at each Reeb chord.  Let $u = (u_1,u_2,\ldots, u_k)$ be a
  holomorphic building with levels $u_1,u_2, \ldots, u_k$.  Define 
\[ [\partial u] := [ \partial u_1] + [\partial u_2] + \ldots +
[\partial u_k] , \quad  [ u] := [  u_1] + [ u_2] + \ldots +
[ u_k] . \]
The {\em  boundary homology class} of a treed holomorphic disk 
\[ u: C \to \R \times Z \quad \text{or} \quad u: C \to X \] 
is defined as follows:  In the first case of maps to $\R \times Z$, the boundary components  $(\partial S)_i, i = 1,\ldots, l(i)$ of each disk component $u_v: S_v \to X$ define paths
\[ (\partial u_v)_i : (\partial S)_i \to \Lambda_{k(i)}, \quad i = 1,\ldots, l(i)  \] 
where
$\Lambda_{k(i)} \subset \Lambda$ is some connected component.  Each
trajectory $u_e: T_e \to {\cR}(\Lambda)$ defines by composition a pair
of maps 
\[ u_{e,b}: T_e \to \Lambda_{k_b(e)}, \quad b \in \{ 0, 1 \}  \] 
by composing $u_e$ with
evaluation ${\cR}(\Lambda) \to \Lambda$ at the start or endpoint of the
path.  The boundary homology class $[\partial u] \in H_1(\Lambda)$ is defined as the
class of the sum of the chains above.   In the case of a treed disk in $X$ bounding a cobordism $L$, 
one obtains a class in $H_1(\ti{L}) \cong H_1(L)$ by a similar construction.  

\vskip .1in \noindent  
The {\em  
boundary class} of a treed building $u: C \to \XX$ is the 
element $[\partial u] \in H_1(L)$ obtained by concatenating 
the restrictions $  \partial u_v$, the compositions of 
the trajectories $u_e: T_e \to {\cR}(\Lambda)$ with the 
projections $\cR(\Lambda) \to \Lambda$ given by evaluation at the end point or starting point, 
and the capping paths to the base points $\lambda_i$ in the corresponding components $\Lambda_i$ of $\Lambda$.
This ends the Definition.
\end{definition}

\vskip .1in

\section{Regularization}
\label{foundsec}

In this section, we discuss the necessary Fredholm theory, compactness
results, and regularization techniques to make the moduli spaces of
treed buildings of dimension at most one compact manifolds with boundary.
The compactness results in
symplectic field theory are similar to those covered in 
\cite{sft}, \cite{abbas:sft}.  We take as a
regularization scheme the stabilizing divisors scheme of
Cieliebak-Mohnke \cite{cm:trans}; the adaptation to the case at hand requires a small extension of the results in Pascaleff-Tonkonog \cite{pt:wall}
to the case of clean intersection.  The reader could presumably substitute their favorite regularization scheme.

\subsection{Donaldson hypersurfaces}
\label{sec:don}

To achieve manifold structures on the moduli spaces of treed holomorphic maps we use Cieliebak-Mohnke perturbations \cite{cm:trans}.  Since the compactification of our Lagrangians are only cleanly-intersecting, we extend the construction to that case, generalizing the construction in Pascaleff-Tonkonog \cite{pt:wall}. 

\begin{definition}
\vskip .1in \noindent 
A {\em  Donaldson hypersurface} in a symplectic manifold $M$ with symplectic form $\omega$ is a codimension two symplectic submanifold $D \subset M$ such that the Poincar\'e dual cohomology class $[D]^\dual$ satisfies 
  \[ [D]^\dual =  k [\omega] \in H^2(M) \]
for some positive $k > 0 $ called the {\em  degree} of $D$. 
\end{definition}

\vskip .1in
Now we assume that $X$ is a symplectic cobordism with circle-fibered boundary and let $\ol{X}$ be its compactification, as in Section \ref{tocob}. Results of Cieliebak-Mohnke \cite{cm:trans} and Charest-Woodward
\cite{cw:traj} imply that there exist Donaldson hypersurfaces which intersect 
holomorphic disks and spheres in only finitely many points:

\begin{proposition} \label{prop:je}
  There exists a $k_0 \in \Z_+$ so that if $D \subset \ol{X}$ is a Donaldson
  hypersurface $D$ of degree at least $k_0$ then there exists a tame almost
  complex structure on $\ol{X}$  so that $D$ contains no rational curves $u: S^2 \to \ol{X}$.
  Furthermore, for any such $J_0$ and energy bound $E$, there exists an
  open neighborhood, 
  \[ \J^E(\ol{X}) \subset \J(\ol{X}),   \]
  in the space of tame almost
  complex structures on $\ol{X}$ with the property that if
  $J_1 \in \J^E(\ol{X})$ then any non-constant $J_1$-holomorphic
  sphere $u: S^2 \to \ol{X}$ of energy at most $E$ meets $D$ in at least three but finitely many points $u^{-1}(D)$.
  \end{proposition}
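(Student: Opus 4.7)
The plan is to invoke and adapt the Donaldson-hypersurface construction of Cieliebak-Mohnke \cite{cm:trans}; the only work beyond citation is checking that the hypotheses are satisfied for the compactification $\ol{X}$, which is a closed symplectic manifold with integral symplectic class by \ref{p1}, and extracting the specific quantitative statement above.

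First, I would produce the pair $(D, J_0)$. Donaldson's theorem, refined in \cite{cm:trans}, gives a constant $k_0'$ such that for every $k \geq k_0'$ there exists a symplectic hypersurface $D \subset \ol{X}$ Poincar\'e-dual to $k[\omega]$, together with an $\omega$-tame almost complex structure $J_0$ on $\ol{X}$ making $D$ an almost complex submanifold and regular for $J_0$-holomorphic spheres in $D$. The absence of $J_0$-rational curves in $D$ is then a dimension count as in \cite[Prop.~8.11]{cm:trans}: the virtual dimension of the moduli space of simple $J_0|_D$-spheres in $D$ becomes negative once $k$ is large enough, and multiply covered spheres are excluded by the absence of an underlying simple representative.

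Next, fix such $J_0$ and the bound $E$. For any non-constant $J_0$-holomorphic sphere $u : S^2 \to \ol{X}$ of energy at most $E$, the image $u(S^2)$ is not contained in $D$ by the previous step, so $u^{-1}(D)$ is a discrete hence finite subset of $S^2$. Positivity of intersections between a $J_0$-holomorphic curve and the $J_0$-almost complex hypersurface $D$ then gives each point of $u^{-1}(D)$ multiplicity at least one, so
\[ |u^{-1}(D)| \;\leq\; [u]\cdot [D] \;=\; k\,\omega(u). \]
Gromov's minimal-area theorem applied to $J_0$ yields $\hbar>0$ with $\omega(u)\geq \hbar$ for every non-constant $J_0$-holomorphic sphere, so enlarging $k_0$ to exceed $3/\hbar$ forces $|u^{-1}(D)|\geq 3$.

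Finally, for openness I would argue by contradiction using Gromov compactness. If no such neighborhood $\J^E(\ol{X})$ exists, there is a sequence $J_\nu\to J_0$ in the tame space and $J_\nu$-holomorphic spheres $u_\nu$ of energy at most $E$ with $|u_\nu^{-1}(D)|<3$ (the bound from above is automatic once one rules out $u_\nu\subset D$, which in turn follows from closeness to $J_0$). A subsequence bubble-tree converges to a stable $J_0$-holomorphic map $u_\infty$ of energy at most $E$; by the previous paragraph applied componentwise, the total count of intersections of $u_\infty$ with $D$ is at least three. The main obstacle, which is the essential content that one must import rather than re-prove, is lower-semicontinuity of the intersection count: intersection points of $u_\nu$ with $D$ may accumulate at bubble points in the limit, but must persist as distinct intersections on the bubble tree. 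This is exactly the local persistence argument of \cite[Lemma~8.7]{cm:trans}, which applies verbatim because both $D$ and the spheres live in the closed manifold $\ol{X}$; combining it with the established count for $u_\infty$ contradicts $|u_\nu^{-1}(D)|<3$.
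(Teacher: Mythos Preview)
Your overall strategy matches the paper's: the authors do not give a proof but simply attribute the proposition to Cieliebak--Mohnke \cite{cm:trans} and Charest--Woodward \cite{cw:traj}, and your sketch is an attempt to unpack exactly those references.

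However, your second paragraph contains a genuine logical gap. From positivity of intersections you correctly deduce
\[
|u^{-1}(D)| \;\leq\; [u]\cdot[D] \;=\; k\,\omega(u),
\]
an \emph{upper} bound on the number of geometric intersection points. You then combine this with $k\omega(u)\geq k\hbar>3$ and assert $|u^{-1}(D)|\geq 3$, but that does not follow: the displayed inequality goes the wrong way. A single intersection point of multiplicity $k\omega(u)$ is entirely consistent with both facts. What you are missing is the mechanism in \cite{cm:trans} that controls local multiplicities. In Cieliebak--Mohnke the hypersurface $D$ is produced simultaneously with $J_0$ from an approximately holomorphic section, and part of the construction (their uniform transversality estimates, cf.\ \cite[Cor.~8.16, Prop.~8.14]{cm:trans}) guarantees that $J_0$-spheres of bounded energy meet $D$ with bounded tangency order; only then does the large homological intersection number $k\omega(u)$ force many distinct points. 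Your Gromov-compactness argument for the openness of $\J^E(\ol{X})$ is fine in spirit, but it inherits the same defect: the limit $u_\infty$ needs the transversality input to have three distinct intersection points in the first place.
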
 

\begin{remark}
  In the case that $X$ is a complex projective variety, a result of Clemens
  \cite{clemens} implies that we may take $D$ to be a Bertini hypersurface, 
  and leave the complex structure unperturbed. 
  \end{remark}
  
  Our regularization procedure requires Donaldson hypersurfaces that intersect each non-constant punctured holomorphic disk.  Let $\ol{L} \subset \ol{X}$ be the closure
  of a Lagrangian cobordism, so that $\ol{L}$ is the union of 
  a smooth Lagrangian $L$ and Lagrangians $\Pi_\pm$ in $Y_\pm$, with 
  angles between the branches at the self-intersections bounded from below, as in Definition \ref{def:branchedLag}.

\begin{definition} \label{rational} A cleanly self-intersecting Lagrangian $\ol{L} \subset \ol{X}$ is {\em  rational} if there exists a line-bundle-with-connection
  \[ \hat{X} \to \ol{X} \]
  whose curvature 
  is $k \ol{\omega} \in \Omega^2(\ol{X}) $ for some integer $k > 0$
such that the pull-back of $\hat{X}$ to the Lagrangian $\ol{L}$
has a flat section.  

\vskip .1in \noindent 
The Lagrangian $L$ is {\em  exact} on a subset 
$U \subset X$ with $\ol{U} \subset \ol{X}$ open if there exists
a one-form $\alpha \in \Omega^1(\ol{U})$ with $ \d \alpha = \ol{\omega} |_{\ol{U}}$
and a continuous function $f : \ol{L} \to \R$ so that $\d f|_L = \alpha |_L$.
\noindent A symplectic cobordism $\widetilde{X}$ with concave end $Z_-$ and convex end $Z_+$ is {\em rational} if the associated
symplectic manifold $\ol{X}$ containing $Y_-,Y_+$ is rational.

\noindent Similarly, a Lagrangian cobordism
$\widetilde{L} \subset \widetilde{X}$ is {\em rational} if the associated branched Lagrangian $\ol{L} \subset \ol{X}$ is rational.   This ends the Definition.  
  \end{definition} 

  
\begin{lemma} Suppose that $\ol{X}$ is simply connected and every disk $u: S \to \ol{X}$ with boundary on $\ol{L}$ has rational area $\lan [u], [\ol{\omega}] \ran\in \Q$.  Then $\ol{L}$ is rational. 
\end{lemma}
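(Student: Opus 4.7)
The plan is to apply a prequantization argument: produce a Hermitian line bundle with connection whose curvature is a positive integer multiple of $\omega$, and then verify that its pullback to $\ol{L}$ admits a flat section by a holonomy calculation using simple-connectedness.

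First, I would choose the integer $k > 0$. Since $\ol{X}$ is compact, the group $H_2(\ol{X}, \ol{L}; \Z)$ is finitely generated, and each generator can be realized by a smooth disk $u: (D^2, \partial D^2) \to (\ol{X}, \ol{L})$. By hypothesis each such disk has rational symplectic area; the special case of constant boundary yields that $[\omega]$ pairs rationally with $H_2(\ol{X})$ as well. Choosing $k$ to clear the denominators for a finite generating set gives $k \lan [\omega], [u] \ran \in \Z$ for every disk $u$ bounding $\ol{L}$, and in particular $k[\omega] \in H^2(\ol{X}, \Z)$. By the standard correspondence between integral second cohomology classes and Hermitian line bundles, there exists $\hat{X} \to \ol{X}$ with $c_1(\hat{X}) = k[\omega]$, and a Hermitian connection $\nabla$ whose curvature represents this class, so that $\tfrac{1}{2\pi i} F_\nabla = k\omega$.

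Next, I would verify flatness and triviality of holonomy on $\ol{L}$. The pullback $F_\nabla |_{\ol{L}}$ vanishes on every smooth sheet by the Lagrangian condition, and in particular on the normalization $\widetilde{L} \to \ol{L}$. Given any loop $\gamma$ in $\widetilde{L}$, push it into $\ol{X}$; by simple-connectedness it bounds a disk $u: D^2 \to \ol{X}$ with $u(\partial D^2) \subset \ol{L}$. Stokes's theorem identifies the holonomy of $\nabla$ around $\gamma$ with $\exp(2\pi i \, k \lan [u], [\omega] \ran) = 1$ by the choice of $k$. Thus $\hat{X}|_{\widetilde{L}}$ is a flat Hermitian bundle with trivial holonomy, and admits a unit-length flat section.

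The subtle point is that $\ol{L}$ is merely cleanly self-intersecting, so the notion of flat section on $\ol{L}$ (as opposed to $\widetilde{L}$) requires the values at a branch point to agree along all incoming branches. This compatibility follows from the same holonomy argument: any loop in $\widetilde{L}$ obtained by traversing one sheet to a branch point and returning along another sheet projects to a loop in $\ol{X}$ that still bounds a disk with boundary on $\ol{L}$, so rationality of its area forces the parallel transports along the two sheets to match. I expect this matching at the branch strata to be the main place where the clean-intersection hypothesis in Definition \ref{def:branchedLag} is used; once it is confirmed, the flat section descends from $\widetilde{L}$ to $\ol{L}$ and witnesses the rationality of $\ol{L}$.
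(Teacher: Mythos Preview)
Your proposal is correct and follows essentially the same prequantization-plus-holonomy argument as the paper, with the added care of treating the branched locus explicitly (the paper simply invokes parallel transport along paths in $\ol{L}$). One small point: the claim that each generator of $H_2(\ol{X},\ol{L};\Z)$ is realized by a smooth disk is not obviously true and is unnecessary---what you actually need is that the disk areas lie in a finitely generated subgroup of $\Q$, and this follows because the area pairing factors through the finitely generated group $H_2(\ol{X},\ol{L};\Z)$, so the subgroup generated by disk areas is a finitely generated subgroup of $\Q$, hence cyclic.
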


\begin{proof}   We will construct the require section over the Lagrangian by parallel transport.  Let $\hat{X}$ be a line bundle-with-connection
  with curvature $k \ol{\omega}$.  After replacing $k$ with a sufficiently large integer multiple, every disk bounding $\ol{L}$ has integer $k\ol{\omega}$-area. It follows from the assumption that $X$ is simply connected that the holonomy around any loop in $\ol{L}$ is trivial, as in the computation in the proof of Proposition \ref{discrete}. Parallel transport of an arbitrary point in $\hat{X}$ along paths in $\ol{L}$
  then defines the desired flat section $ s: \ol{L} \to \hat{X}^{\otimes k} | \ol{L}$.
\end{proof}

\begin{example}  
    The filling of $\Lambda_\epsilon \subset S^{2n-1}$ from \eqref{eq:lambdaeps}, \eqref{hl}, induced from the Harvey-Lawson filling can be assumed to be rational for a judicious choice of $\eps>0.$   We first choose the symplectic cut so that the filling has compactification $X = \CP^n$
    with symplectic class the generator of $H^2(X,\Z)$.   Now any disk bounding 
    $\ol{L}_\eps$ can be deformed to a disk bounding $\Pi_\eps = p(\Lambda_\eps) \cong (S^1)^{n-1}$,
    since $L_\eps \cong S^1 \times \R^{n-1}$.
    Then $H_2(L_\eps)$ is generated by the $H_2(X)$ and lifts of the generators of $H_1(L_\eps)$, by the long exact sequence for relative homology, and have rational area for $\eps$ rational.  
\end{example}

The idea of creating Donaldson divisors in rational symplectic manifolds as introduced in \cite{donaldson:symplsub}  was to take the zero locus of an ``almost" holomorphic section of the line bundle $\hat{X}^{\otimes k}$. We recall the construction of Donaldson divisors in the complement of an isotropic submanifold was done in \cite{aurmohgay:symplchyp}.  The idea is to find a sequence of almost holomorphic sections over tensor powers of the line bundle $\hat{X}^{\otimes k}$ which is concentrated away from zero on the isotropic submanifold. Thus,  the zero-locus of such a section will not intersect the isotropic submanifold. 
In \cite[Theorem 3.3]{pt:wall}, it is proved that one can choose a Donaldson divisor in the complement of two isotropic  which are cleanly intersecting. We extend the ideas introduced there to construct Donaldson divisors on the complement of a branched rational Lagrangian. \label{rep:46}

\begin{proposition} \label{prop:concentrated}
  Let $\ol L$ be a branched rational
  Lagrangian submanifold then for large $k$, there exists a section $\sigma_{k,L}$ of $\hat{X}^{\otimes k}$  which is concentrated over $\ol L$ in the sense of \cite{aurmohgay:symplchyp}.
\end{proposition}

In preparation for the proof, we cover the singular locus by Darboux balls as follows. \label{rep:linearLagsubspace}
Let $\ol{L}$ be a branched rational Lagrangian.  
Choose Darboux charts at $p\in L_\cap$ so that the intersection with $L$ is the union of linear Lagrangian subspaces.  We will use these charts to construct peak sections $\sigma_{k,p}$ of $\widehat{X}^{\otimes k}$ as done in Auroux-Gayet-Mohsen \cite{aurmohgay:symplchyp}.  Define 
\[d_k: \ol{L} \times \ol{L} \to \R_{ \ge 0 } \]
to be the distance with respect to the scaled metric $kg$ where $g$ is the compatible metric on 
$X$
induced from $J$.  Since $\ol{L}$ is a rational Lagrangian, there exists a flat section 
\[ \tau_k: \ol{L} \to \widehat{X}^{\otimes k} |_{\ol{L}} . \] 
We construct an approximately holomorphic section $\sigma_{k,L_\cap}$ by the process described in \cite{aurmohgay:symplchyp}.  For a Darboux ball $\psi: B_r(0) \to X$, we call $\psi(0)$ to be the center of the Darboux ball. We choose the Darboux balls $(B_i,\psi_i)$ so that $\ol{\partial}\psi (0) =0$. Near any $p\in L_{\cap}$, there is the symplectomorphism $\phi$ from Definition \ref{def:branchedLag}, under which $\ol L$ is a union of $k$ Lagrangian subspaces in the local model.  Cover $L_\cap$ with such Darboux balls 
\[ \cup B_i \supset L_\cap  \] 
%
centers are $\frac{2}{3}$ distance away from each other. Since there  is a finite number of balls, there is an uniform bound on the size of the Darboux balls, and $|\nabla\ol{\partial}\psi|$ in terms of the $d_1$ metric. Let 
\[ P_k \subset L_\cap  \] 
denote the center of the balls. \label{rep:peaksection}

We briefly recall the construction of peak sections $\sigma_{k,p}$ from \cite{aurmohgay:symplchyp}. For a Darboux chart with center $p$,  a suitable unitary gauge-transformation provides a trivialization of $\hat{X}^{\otimes k}$ where the connection 1-form 
is $\frac{k}{4}\sum ( z_jd\ol{z}_j - \ol{z}_jdz_j).$
The sections $\sigma_{k,p}$ are defined as $\beta(z)\exp(-k|z|^2/4)$ in the Darboux balls where $\beta$ is a smooth cut-off function which vanishes outside of radius $k^{-1/6}$.   

Define a sequence of approximately holomorphic sections concentrated along the Lagrangian intersection locus $L_\cap$ as follows: 
\begin{equation}\label{eq:peaksecoverintersect}
    \sigma_{k,L_\cap} := \sum_{p\in P_k} \frac{\tau_k(p)}{\sigma_{k,p}(p)}\sigma_{k,p}: X \to \hat{X}^k.
\end{equation}

\begin{lemma}\label{lemma:peakseclowerbd}
For large enough integers $k$, if $d_k(x,L_\cap) < k^{\frac{1}{30}}$ for $x \in \ol L$, then 
\[ |\sigma_{k,L_\cap}(x)|> \frac{1}{2}e^{-2(d^2_k(x, L_\cap) 
+ d_k(x,L_\cap))}. \]
\end{lemma}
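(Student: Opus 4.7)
The plan is to adapt the argument of Auroux--Gayet--Mohsen \cite{aurmohgay:symplchyp} as modified in Pascaleff--Tonkonog \cite{pt:wall} for two cleanly intersecting Lagrangians to the present setting of a branched Lagrangian. The key facts about the peak sections $\sigma_{k,p}$ that we will use are: each $\sigma_{k,p}$ is an approximately holomorphic section of $\widehat{X}^{\otimes k}$ localized near $p$, satisfies a Gaussian-type estimate of the form
\[ |\sigma_{k,p}(x)| = e^{-d_k(x,p)^2/4}\bigl(1 + O(k^{-1/2})\bigr) \]
for $d_k(x,p) \leq k^{1/6}$, and has unit norm at $p$ up to a factor $1 + O(k^{-1/2})$. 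Moreover, since $\tau_k$ is a flat section of $\widehat{X}^{\otimes k}|_{\ol L}$ of unit norm, the phases of the normalized peak sections $\tau_k(p)/\sigma_{k,p}(p)$ are controlled by the holonomy of the connection along paths in $\ol L$, which for nearby $p, p' \in L_\cap$ is close to $1$.

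First I would locate $p_0 \in P_k$ with $d_k(x,p_0)$ small. Let $y \in L_\cap$ realize the distance $d_k(x,L_\cap)$; by the covering property of the $\tfrac{2}{3}$-net, there exists $p_0 \in P_k$ with $d_k(y,p_0) \leq \tfrac{2}{3}$, and hence
\[ d_k(x,p_0) \leq d_k(x,L_\cap) + \tfrac{2}{3}. \]
Under the assumption $d_k(x,L_\cap) < k^{1/30}$, this keeps $x$ well inside the radius of validity of the peak-section estimates centered at $p_0$.

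Next I would estimate the single term at $p_0$. Writing $\sigma_{k,L_\cap}$ in the form \eqref{eq:peaksecoverintersect}, the $p_0$-term has norm at least
\[ e^{-d_k(x,p_0)^2/4}\bigl(1 - O(k^{-1/2})\bigr) \;\geq\; \tfrac{3}{4}\, e^{-d_k(x,L_\cap)^2/4 - d_k(x,L_\cap)/3 - 1/9}. \]
For the remaining terms I would split $P_k \setminus \{p_0\}$ into annular shells $A_j = \{p \in P_k : j \leq d_k(x,p) < j+1\}$, $j \geq 1$. Since the balls around points of $P_k$ are disjoint and $L_\cap$ has dimension at most $2n-2$, there exists a polynomial bound $|A_j| \leq C j^{2n-2}$. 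The contribution from shell $A_j$ is bounded above by $C j^{2n-2} e^{-j^2/4}$, and summing over $j \geq 1$ gives a fixed constant times $e^{-1/4}$, independent of $x$. The key phase comparison: by parallel transport of $\tau_k$ along a short path in $\ol L$, the normalized peak sections $\tau_k(p_0)/\sigma_{k,p_0}(p_0)\cdot \sigma_{k,p_0}(x)$ and the nearest tail terms agree in argument up to $O(k^{-1/2}) + O(d_k)$, so the close-in terms constructively add rather than cancel.

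The main obstacle will be controlling phase coherence among the peak sections in the clean intersection setting; here, unlike the transverse case, the tangent spaces at points of $L_\cap$ belong to a higher-dimensional locus, so parallel transport and the approximately holomorphic nature of $\sigma_{k,p}$ must be combined carefully to ensure that the sum does not cancel near $x$. Once phase alignment is established for the $O(1)$ many closest terms and Gaussian domination of the tail is proved, combining the lower bound on the $p_0$-term with the tail estimate and absorbing constants into the generous exponent yields
\[ |\sigma_{k,L_\cap}(x)| > \tfrac{1}{2}\, e^{-2(d_k(x,L_\cap)^2 + d_k(x,L_\cap))} \]
for all sufficiently large $k$, as desired.
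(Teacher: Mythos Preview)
Your overall architecture matches the paper's---isolate a nearest center $p_0\in P_k$, lower-bound its contribution, and control the rest---but there is a genuine gap in how you handle the tail. You split $P_k\setminus\{p_0\}$ into integer shells and bound the total contribution by a $k$-independent constant. But the hypothesis allows $d_k(x,L_\cap)$ to be as large as $k^{1/30}$, so your main-term lower bound is of order $e^{-d_k(x,L_\cap)^2/4}\sim e^{-k^{1/15}/4}$, which tends to zero with $k$; a constant tail bound then swamps it and you cannot conclude anything. Your remark that ``close-in terms constructively add'' does not rescue this, because you have already placed all terms other than $p_0$ on the tail side.

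The paper fixes exactly this by splitting at a $k$-dependent radius $k^{1/20}$. The near sum (all $p$ with $d_k(x,p)<k^{1/20}$) is handled by the phase-coherence Lemma~4 of \cite{aurmohgay:symplchyp}, which gives $\bigl|\arg\tfrac{\sigma_{k,p}(x)}{\tau_k(x)}-\arg\tfrac{\sigma_{k,p}(p)}{\tau_k(p)}\bigr|\le\pi/4$ whenever $d_k(p,x)\le k^{1/10}$; thus every near term lies in a common half-plane and the near sum has magnitude at least $|\sigma_{k,p_f}(x)|\ge e^{-(d_k(x,L_\cap)+1)^2}$. The far sum is then bounded by $\tilde c\,e^{-\tilde r k^{1/10}}$ via Lemma~3 of \cite{aurmohgay:symplchyp}, and since $k^{1/10}\gg k^{1/15}$ this is negligible compared to the main term for large $k$. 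Your phase estimate ``$O(k^{-1/2})+O(d_k)$'' is also too weak for this purpose, since $d_k$ may be of order $k^{1/30}$; you need the uniform $\pi/4$ bound valid out to distance $k^{1/10}$.
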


\begin{proof}  We run through the construction of asymptotically holomorphic sections again, paying careful attention to the differences in phases. 
Fix some $x \in \ol L$ such that $d_k(x,L_\cap)<k^{\frac{1}{30}}$. Let $p_f\in P_k$ be the center of the ball which contains the closest point in 
$L_\cap$ to $x$.
By increasing $k$, we can further assume that 
the distance from $x$ to the center $p_f$ satisfies the bound
\[ d_k(p_f,x) < k^{\frac{1}{20}}. \] 
The above inequality follows from the fact that for large $k$ we have $k^{1/20} > k^{1/30}+1.$

We now separate the estimate for the section into contributions whose centers are close resp. far away from the given point.    By the proof of Lemma 4 of \cite{aurmohgay:symplchyp} verbatim, when $p \in L_\cap$ and $x\in \ol L$ are at most $k^{1/10}$ distance apart, their phases are close in the sense that 
\begin{equation} \label{prox} \left| \arg\left(\frac{\sigma_{k,p} (x) }{\tau_k(x)}\right) -\arg\left(\frac{\sigma_{k,p} (p) }{\tau_k(p)}\right)\right| \leq \frac{\pi}{4} . \end{equation}
By the proximity of the phases $\arg (\frac{\sigma_{k,p}}{\tau_k})$ to each other in \eqref{prox}, 
\begin{align}\label{eq:sum_bigger_than_pf}
\left|  \sum_{p\in P_k|d_k(x,p)<k^{\frac{1}{20}}}
  \frac{\tau_k(p)}{\sigma_{k,p}(p)}\sigma_{k,p}(x) \right| &\geq \left|\sigma_{k,p_f}
                                                   (x) \right| .
\end{align}
The cutoff function used to define the section $\sigma_{k,p_f}$ is equal to one at $x$. From the definition of $p_f$ and the triangle inequality
\[ d_k(x,L_\cap) + 1 > d_k(x,p_f) .\] 
Thus for any $x \in \ol L$ with $d_k(x,L_\cap) < k^{\frac{1}{30}}$,
\begin{equation}
    |\sigma_{k,p_f}(x)| \geq e^{-d^2_k(x,p_f)} \geq e^{-(d_k(x,L_\cap)+1)^2}.
\end{equation}
Thus by combining equations \eqref{eq:sum_bigger_than_pf} and \eqref{eq:lowerboundforclose} we get 

\begin{equation}\label{eq:lowerboundforclose}
    \left|  \sum_{p\in P_k|d_k(x,p)<k^{\frac{1}{20}}}
  \frac{\tau_k(p)}{\sigma_{k,p}(p)}\sigma_{k,p}(x) \right| \ge e^{-(d_k(x,L_\cap)+1)^2}
\end{equation}

To estimate the section from ``far away'' centers, recall that 
Lemma 3 from \cite{aurmohgay:symplchyp} implies that there exists $\widetilde c, \widetilde r >0$ independent of $k, P_k$ such that
\begin{equation}\label{eq:upperboundforfar}
    \left|\sum_{p\in P_k|d_k(x,p)>k^{\frac{1}{20}}} \frac{\tau_k(p)}{\sigma_{k,p}(p)}\sigma_p(x) \right| \leq \widetilde{ c} e^{-\widetilde{r} k^{\frac{1}{20}}} \; \forall x\in X.
\end{equation}

We now combine the estimates from the close and far-away centers.
From the inequalities (\ref{eq:lowerboundforclose})  and (\ref{eq:upperboundforfar}), we get that for large $k$, 
\begin{align*}
     |\sigma_{k,L_\cap}(x)| &= \left|\sum_{p\in P_k|d_k(x,p)<k^{\frac{1}{20}}} \frac{\tau_k(p)}{\sigma_{k,p}(p)}\sigma_p(x)  +  \sum_{p\in P_k|d_k(x,p)>k^{\frac{1}{20}}} \frac{\tau_k(p)}{\sigma_{k,p}(p)}\sigma_p(x) \right| \\ \text{so} \ 
    |\sigma_{k,L_\cap}(x)| &\geq \left|\sum_{p\in P_k|d_k(x,p)<k^{\frac{1}{20}}} \frac{\tau_k(p)}{\sigma_{k,p}(p)}\sigma_p(x) \right| - \left| \sum_{p\in P_k|d_k(x,p)>k^{\frac{1}{20}}} \frac{\tau_k(p)}{\sigma_{k,p}(p)}\sigma_p(x) \right| \\ \text{so} \ 
     |\sigma_{k,L_\cap}(x)| &\geq e^{-(d_k(x,L_\cap)+1)^2} - \widetilde c e^{-\widetilde{ r} k^{1/10}} \\\ \text{so} \ 
      |\sigma_{k,L_\cap}(x)|&\geq \frac{1}{2}e^{-2(d^2_k(x, L_\cap) + d_k(x,L_\cap))}.
\end{align*}
The last inequality follows from  the chain of inequalities 
\[ \frac{1}{2}e^{-d^2_k(x,L_\cap)-d_k(x,L_\cap)} >\frac{1}{2} e^{-k^{1/15} - k^{\frac{1}{30}} }> \widetilde c e^{-\widetilde r k^{1/10}}  \]  
for large $k$. This gives the desired inequality 
\begin{equation*}
    |\sigma_{k,L_\cap}(x)| \geq \frac{1}{2}e^{-2(d^2_k(x, L_\cap) + d_k(x,L_\cap))}.
\end{equation*}
\end{proof}

\begin{proof}[Proof of Proposition \ref{prop:concentrated}]
We prove the statement of the Proposition for the special case where $\ol{L}$ is the union of $m$ Lagrangians $L_i$ which cleanly intersect at $L_\cap$ to avoid notational clutter, the general case follows from an usual patching argument. The main difficulty lies in construction of an approximately holomorphic section such that it does not vanish on the different branches of $\ol{L}$ near $L_\cap$.
We aim to construct an approximately holomorphic section concentrated on the Lagrangian $\ol{L}$ by summing the locally Gaussian sections  $\sigma_{k,L_{\cap}}$ as defined in Equation (\ref{eq:peaksecoverintersect}) and Lemma \ref{lemma:peakseclowerbd}.  

We define a section concentrated on the Lagrangian branches $L_i$ by summing the sections on peak sections which match up with the peak sections on the intersection locus $L_\cap$. 
Recall that $P_k \subset L_\cap$ was defined to be the collection center of the balls of radius one which covered $L_\cap.$
We start with balls centered around $P_k$ and extend it to a finite cover of $L_i$ by radius $ \frac{1}{3}$ balls. Denote the set of centers of these balls as 
\[ P^{\on{pre}}_k \subset L_i .\] 
Iterated removal of points from $P^{\on{pre}}_k$  ensures that any two points in $P^{\on{pre}}_k$ are at least $\frac{2}{3}$ distance apart.  
That is, if $p,q \in P^{\on{pre}}_k$ are closer than $\frac{2}{3}$ we drop either $p$ or $q$ from the set $P^{\on{pre},i}_k$. We adopt the convention of not removing points from $P_k$ while doing the iterated removal. Let $P^i_k$ denote the result of the iterated removal process. We denote the section by summing over peak sections over the balls centered at $P^i_k$ as 
\[\sigma_{k,L_i} := \sum_{p\in P^i_k} \frac{\tau_k(p)}{\sigma_{k,p}(p)}\sigma_{k,p}: X \to \hat{X}^k \].
The proof of Lemma 4 of \cite{aurmohgay:symplchyp} goes over verbatim to show that for large $k$, phases at close points are close:  That is, if  
\[ d_k(p,x) \leq k^{1/10}, \quad p,x \in L_i \] 
then 
\[  \left| \arg\left(\frac{\sigma_{k,p} (x) }{\tau_k(x)}\right) -\arg\left(\frac{\sigma_{k,p} (p) }{\tau_k(p)}\right)\right| \leq \frac{\pi}{4} .\]

We show that by taking a weighted sum of the sections constructed before, we can get a sequence of approximately holomorphic sections concentrated on $\ol{L}$.  We imitate the argument of Theorem 3.3 in Pascaleff-Tonkonog \cite{pt:wall}. To match with the notation in \cite{pt:wall} do the following renaming: call $\sigma_{k,L_i} $ as $s^{(k)}_i$ and $\sigma_{k,L_\cap}$ as $s^{(k)}_\cap$. Note that the following bounds hold for some fixed parameters  $c_l,c_M,r_l,r_M$ which do not depend on $k$:
\begin{itemize}
    \item $ |s^{(k)}_\cap(x)| \geq c_l \exp(-r_l(d^2_k(x, L_\cap) + d_k(x,L_\cap)))$ for $x\in \ol L$, with $d_k(x,L_\cap)< k^{\frac{1}{30}}$ from Lemma \ref{lemma:peakseclowerbd},
    \item $ c_l \leq |s^{(k)}_i| < 1$ for all $i$, 
    and
    \item $|s^{(k)}_i(x)| \leq c_M\exp(-r_M(d^2_k(x,L)))$.    
\end{itemize}
Define $c_\theta \in \R $ such that for $p\in L_i, q \in L_j$ with $i\neq j$ which are at a distance $d$ from $L_\cap$,  then $d_k(p,q) \geq c_\theta d $ . Such a constant $c_\theta$ exists because the angles between $L_i,L_j$ have a lower bound, by assumption.  Let $\delta>0$  be large enough so that 
\[ c_M\exp(-r_Mc^2_\theta \delta^2) < \frac {c_l}{10m^2} . \] 
This choice of $\delta$ does not depend on $k$. Set 
\begin{equation} \label{cdef}
c := \frac{c_l\exp(-r_l(\delta^2+\delta))}{10m^2}. \end{equation}
and
\[ s^{(k)} := s^{(k)}_\cap + c\sum s^{(k)}_i:  \ol{X} \to \hat{X}^k . \]

We show that the sections $s^{(k)}$ are concentrated on $\ol L$ for large $k$. For $k$ large, we have $\delta  < k^{\frac{1}{30}}$. Thus, in a $\delta$ neighborhood of $L_\cap$, from Lemma (\ref{lemma:peakseclowerbd}), we have that 
\[ |s^{(k)}_\cap (x)|  > c_l\exp(-r_l(\delta^2+\delta)) \; \forall x\in \ol L\] 
on $L_i$. 
By \eqref{cdef} the sections 
\[ |cs^{(k)}_j |  < \frac{1}{10m^2} c_l\exp(-r_l(\delta^2+\delta)) .\]
Thus, their sum is bounded above in a $\delta$ neighborhood of $L_\cap$ as follows: 
\[ \left |\sum_{j=1}^m cs^{(k)}_j \right| \leq m\frac{c_l\exp(-r_l(\delta^2+\delta))}{10m^2} =\frac{c_l\exp(-r_l(\delta^2+\delta))}{10m}.  \]
Thus we have in a $\delta$ neighborhood of $L_\cap$,  
\[ |s^{(k)}| \geq |s^{(k)}_\cap| - \left|\sum_{j=1}^m cs^{(k)}_j\right| \geq
  \left(1-\frac{1}{10m} \right) c_l\exp(-r_l(\delta^2+\delta)) .\]
When $x\in L_i$ and outside an open neighborhood of $\delta$ distance from $L_\cap$ we have that $s^{(k)}_\cap +  cs^{(k)}_i$ is bounded below by $cc_l$ on $L_i$ whereas we have an upper bound 
\[ |cs^{(k)}_j| \ < cc_M\exp(-r_Mc_\theta\delta^2) < \frac{cc_l}{10m^2} . \]
Thus, after adding the $m-1$ sections $cs^{(k)}_j$, for $j\neq i$, to $s^{(k)}_\cap + cs^{(k)}_i$ we have the following lower bound for any point on $L_i$:
\[ \left| s^{(k)}_\cap + cs^{(k)}_i + \sum_{i\neq j} cs^{(k)}_j \right| \geq \left | s^{(k)}_\cap + cs^{(k)}_i \right| -  \sum_{i\neq j} |cs^{(k)}_j |\geq cc_l - cc_l\frac{m-1}{10m^2} > \frac{cc_l}{10m^2}. \]  
We get a global lower bound of $s^{(k)}$ on any point in $\ol L$:
\[ \left | s^{(k)}(x) \right | \geq \min \left\{\frac{c c_l}{10m^2} , c_l\exp(-r_l(\delta^2+\delta))\right\}. \] 

\end{proof}

  \begin{corollary}\label{corr:existenceofD} 
  \begin{enumerate} 
  \item \label{ylag} 
  Let $Y$ be a compact rational symplectic manifold with symplectic form $\omega_Y$, and $\Pi \subset Y$ a compact rational Lagrangian submanifold.  There exists a $k_0 > 0 $ so that for any $k_1 > k_0$ there exists a $k > k_1$    and a Donaldson hypersurface $D \subset Y $ with cohomology class $k [\omega_Y]$ disjoint from $\Pi$ so that $\Pi$ is exact in $Y - D$. 
   \item \label{coblag}
  Let $(\widetilde{X}, \widetilde{L})$ be a rational cobordism between  $(Z_-,\Lambda_-)$ and
   $(Z_+,\Lambda_+)$.  Suppose that $(Z_\pm,\Lambda_\pm)$ fibers over 
  $(Y_\pm,\Pi_\pm)$ as in the previous item \eqref{ylag}, and suppose that $Y_\pm$ are equipped with Donaldson hypersurfaces for which $\Pi_\pm$ is exact in $Y_\pm - D_\pm$ for each degree $k$ 
  as in the previous item.   Possibly after increasing $k$, there there exists a Donaldson hypersurface $D$  in $\ol{X}$ of degree $k$ in $\ol{X}$ with $D \cap Y = D_\pm$ such that $\ol{L}$ is exact in the complement of  $D$. 
\end{enumerate} 
%
  \end{corollary}
  
  \begin{proof} The first item is \cite[Theorem 3.6]{cw:traj}.    The proof of the second item is similar:   Since $\ol{L}$ is a branched Lagrangian in $\ol{X}$, the claim follows from Proposition \ref{prop:concentrated} and the perturbation technique described in Donaldson \cite{donaldson:symplsub} and Auroux-Gayet-Mohsen \cite{aurmohgay:symplchyp}. 
  \end{proof}

\subsection{Fredholm theory}
\label{fredtheory}
The moduli space  of holomorphic disks with boundary on a Lagrangian cobordism is cut out locally by a Fredholm map. Note that the domains of our maps are treed disks with punctures, we need to consider linearizing PDE's over domains with cylindrical ends and over intervals. The Fredholm theory for treed disks without punctures can be found in \cite[Section 4.3]{flips}. The Fredholm theory for maps with cylindrical ends can be found in \cite[Lecture 4]{wendl:sft}. Our following construction is a combination of these two approaches.

We state our results in the context of a Lagrangian cobordism in a symplectic cobordism as in Section 2.2. Recall that $(\ol{X}, \ol{\omega})$ is a closed symplectic manifold, $Y_{\pm}$ are two disjoint symplectic hypersurfaces in $\ol{X}$ and $X= \ol{X}- (Y_- \sqcup Y_+)$. \label{rep:both} Moreover $\ol{L}$ is a Lagrangian in $\ol{X}$ such that $\Pi_{\pm}= \ol{L}\cap Y_{\pm}$ are Lagrangians in $Y_{\pm}$. Then $L=\ol{L}- \Pi_{\pm}$ is a Lagrangian cobordism in $X$ between $\Lambda_+$ and $\Lambda_-$.

Our Sobolev maps are perturbations of a class of model maps
 defined as follows.  \label{rep:of}
Let $S$ be a punctured disk with ends $ \mE(S).$  

\begin{definition} 
\vskip .1in \noindent 
A {\em  model map} is a smooth map $u:(S, \partial S) \to (X, L)$ such that, with 
coordinates given by some collection of cylindrical and strip-like ends 
\[ \kappa_e: \pm(0,\infty) \times [0,1 ] \to S, \quad \kappa_e: \pm(0,\infty) \times S^1 \to S \] 
the map $u$  is linear 
on each end $e$ 
in the sense that 
\begin{equation} \label{linear} u(\kappa_e(s,t)) = (\mu s, \gamma(\mu t)) \in \R \times Z,  \quad \forall (s,t) \in \pm(0,\infty) \times [0,1] \end{equation} 
where $\gamma$ is a Reeb chord  with action $\mu$ or similarly in the case that $\gamma$ is a Reeb orbit.  The set of model maps is defined as 
\[ \Map_0(S, X, L) = \{ u : (S, \partial S) \to (X, L) \ | \ \text{ \eqref{linear} holds for all } \ e \in \mE(S) \}.\] 
\end{definition} 

Our Sobolev completion is defined as a space of maps that are obtained by geodesic exponentiation of a Sobolev-class  section of the pull-back of the tangent bundle. 
To define the class of sections, 
let $\beta$ be a {\em  good bump function} on $\R$ such that
\begin{enumerate}
    \item $\beta$ is non-decreasing;
    \item $\beta(s)=1$ when $s\geq 1$;
    \item $\beta(s)=0$ when $s\leq 0$.
\end{enumerate}
For any {\em  Sobolev decay constant} $\delta>0$ we define a {\em  Sobolev weight function} $\kappa^{\delta}$ on $S$ such that
\begin{enumerate}
    \item $\kappa^{\delta}=0$ outside cylindrical and strip-like ends; and
    \item on each cylindrical or strip-like end, there is a constant $M>0$ with $\kappa^{\delta}(s,t)= \delta \beta(\lvert s\rvert- M)$.
\end{enumerate}
Pick a Riemannian metric $g$ on $X$ which is cylindrical near infinity and a metric connection $\nabla$. We further assume that $L$ is totally geodesic with respect to $g$. Equip $S$ with a 
metric which is of standard form on the cylindrical and strip-like ends.

\begin{definition}
For Sobolev constants $k, p$ with $kp>2$ and a model map $u\in \Map_0(S, X, L)$, define an {\em  enlarged weighted Sobolev space
 } limiting to Reeb chords and Reeb orbits $\gamma_e, e \in \cE(S)$ as follows:   A {\em  model section } of $u^* TX$ is a section 
$\xi: S \to u^* TX$ taking values in $\partial u^* TL$ on the boundary 
so that 
\begin{itemize}
    \item in a neighborhood of each cylindrical end $e$, $\xi(s,t) = \cT_{s,t} \xi(e)$ is given by 
parallel transport $\cT_{s,t}$ of an element of $TL$ along the Reeb chord $\gamma_e$ denoted 
$\xi(e) \in  T_{\gamma_e(0)} \Lambda \oplus \R$; 
\item near an interior puncture, $\xi(s,t) = \cT_{s,t} \xi(e)$ is given by 
parallel transport $\cT_{s,t}$ of an element of $TX$ along the Reeb orbit $\gamma_e$ denoted 
$\xi(e) \in  T_{\gamma_e(0)} Z \oplus \R$.
\end{itemize}
For such 
a model section, define a compactly supported section as follows.  Let $\rho_e: S \to \R$ be a cutoff function equal to $1$ in a neighborhood of the end $e$, and vanishing on all other ends. 
Given 
$\xi$ with asymptotic values $\xi(e)$, the difference 
\[ \xi_0 := \xi - \sum_{e \in \cE(S)} \rho_e \xi(e) \]
is compactly supported.    Define the Sobolev $(k,p,\lambda)$-norm as
\[
\lvert \lvert \xi \rvert \rvert^p_{k,p,\lambda}:= \sum_{e\in \mathcal{E}(S)} \lvert \xi(e) \rvert^p + \int_{S} (\lvert \xi_0 \rvert^p + \lvert \nabla \xi_0 \rvert^p+ \cdots + \lvert \nabla^{k} \xi_0 \rvert^p)e^{p\kappa^{\lambda}}
   . \]
 The completion of the space of model sections in this norm is denoted  \label{rep:addboth}
\[
\Omega^0(u^*TX, \partial u^*TL)_{k,p,\lambda}' :=\{ \xi \mid \lvert \lvert \xi \rvert \rvert_{k,p,\lambda}< +\infty, \xi |_{\partial S} \in TL \}.
\]
Let $ \exp: T X \to X $
denote the geodesic exponentiation map defined using the metric  $g$ on $X$.
Define the {\em  weighted Sobolev space of maps}
\begin{eqnarray*}
\Map_{k,p,\lambda}(S, X, L) & :=& \{ \exp_u{\xi}\mid u\in \Map_{0}(S, X, L), \xi\in \Omega^0(u^*TX, \partial u^*TL)'_{k,p,\lambda}\} \\ 
&=  & \{ \exp_u{\xi}\mid u\in \Map_{0}(S, X, L), \xi\in \Omega^0(u^*TX, \partial u^*TL)_{k,p,\lambda}\}
\end{eqnarray*}
to be those maps which differ from a model map by the exponential of a section in the given 
Sobolev space (or, the ``primed space'' which adds in constants at infinity).  This ends the Definition. 
\end{definition}

There is a similar discussion for Sobolev spaces of maps from broken intervals.  Let $I$ be a broken interval with subintervals $I_j$ as in Definition \ref{def:brokeninterval}. For each sub-interval,  define a space of model maps from $I_j$ to $L$ that are constant near infinity. 
The space $\Map_{k,p}(I_j, L)$ is the space of maps obtained by geodesic exponentiation of a $W^{k,p}$ section of $u^* TL$ from a model map.   The tangent space  is denoted $\Omega^0(T, (u|_{T_L})^* T {L})_{k,p}$, by which we mean 
$W^{k,p}$ maps on each subinterval whose limits match at any breaking.
By definition, such maps have limits along the infinite ends of
any sub-interval $I_j$. \label{rep:treemiss} Let $\Map_{k,p}(I,  L)$ be the subset 
of the product of spaces $\Map_{k,p}(I_j, L)$
so that the limit  along the positive end of the $j$-th interval is equal to that along the negative end of the 
$j+1$-th interval.  

We now define similar Sobolev completion of maps from a treed disk. 
Let $C=S\cup T$ be a treed disk, where $S$ is the surface part and $T$ is the edge part. Let $\Map_{k,p,\lambda}(C, X, L)$ 
denote the subset of the product of maps from the surface part and tree part that agree on the intersection: 
\begin{multline*}
\Map_{k,p,\lambda}(C, X, L)
:= \{ (u_S,u_T) : u_S |_{S \cap T} = u_T |_{S \cap T} \}\\
\subset \Map_{k,p,\lambda}(S, X, L) \times 
\Map_{k,p}(T,  L) .
\end{multline*}

We claim that the moduli space  is cut out locally over the moduli space of domains by a Fredholm map.
Recall from Section \ref{sec:treedbuildings} that $\M_\Gamma$ is the moduli space of treed disks $C = S \cup T$ of type $\Gamma$. Let 
\[ \M_\Gamma^i, i = 1,\ldots, m \] 
be an open cover of $\M_\Gamma$
over which the universal treed disk 
\[ \U_\Gamma = \{ (C, z \in C) \} \to \M_\Gamma \]
is trivial as a stratified bundle \label{rep:stratbundle} in the following sense:
There exists a homeomorphism 
\[  \U_{\Gamma,i} := \U_\Gamma | \M_\Gamma^i \to 
\M_\Gamma^i \times C \] 
which induces a homeomorphism from  any   treed disk $C' \in \M_\Gamma^i$ with $C$ that is 
smooth on each surface component $S_v \subset C$ and on each edge $T_e \subset C$.
Thus we may view $C'$ as the smooth space $C$ equipped with 
an almost complex structure on the surface part $j(C') \in \J(S)$ and a metric $g(C') \in \G(T)$
depending on $C'$.  Define 
\[ \B_\Gamma^{i,k,p,\lambda} := \M_\Gamma^i \times \Map_{k,p,\lambda}(C,X,L). \]
A chart near a pair $(C',u')$ is given by the product of a chart for $\M_\Gamma^i$ near $C'$
and a neighborhood of $0$ under the map constructed as follows:   Suppose for simplicity that 
$S$ has a single strip-like end and no cylindrical ends.   Define a map 
\begin{multline} \Omega^{0}(S, ( {u}|_S)^* T {X})_{k,p,\lambda}'  \oplus \Omega^0(T_L, (u|_{T_L})^* TL)_{k,p}  
\oplus \Omega^0(T_{\white,-}, (u|_{T_{\white,-}})^* \cR(\Lambda_-))_{k,p}  \\
\oplus \Omega^0(T_{\white,+}, (u|_{T_{\white,+}})^* \cR(\Lambda_+) )_{k,p}
\oplus \Omega^0(T_{D}, (u|_{T_D})^* TX )_{k,p}
\longrightarrow  \Map_{k,p,\lambda}(C,X,L) \end{multline}
by geodesic exponentiation, for some metric for which $L$ is totally geodesic and we use these maps as charts to give a Banach manifold structure on $\Map_{k,p,\lambda}$. The fiber of the vector bundle $\cE_\Gamma^{i,k-1,p,\lambda}$ over any map $u$ is the bundle of one-forms
of Sobolev differentiability class one less:
\begin{multline} \label{oneforms} 
\cE_{\Gamma,u}^{k-1,p,\lambda} := \Omega^{0,1}(S, ( {u}|_S)^* T {X})_{k-1,p,\lambda}
\oplus \Omega^1(T_L, (u|_{T_L})^* T {L})_{k-1,p,\lambda} 
\\ \oplus \Omega^1(T_{\white,-}, (u|_{T_{\white,-}})^* \cR(\Lambda_-) )_{k-1,p} 
\oplus \Omega^1(T_{\white,+}, (u|_{T_{\white,+}})^* \cR(\Lambda_+) )_{k-1,p} \\
\oplus \Omega^1(T_{D}, (u|_{T_{D}})^* TX )_{k-1,p}
\end{multline} 
and in the case of broken edges, we mean the space of one-forms on the complement of the breakings. We remark that in general, the vector spaces $\cE_{\Gamma,u}^{k-1,p,\lambda}$ do not form a smooth Banach 
vector bundle over $\B_\Gamma^{k,p,\lambda} $ for reasons \label{rep:forreasons} explained in \cite[Remark 4.2]{cw:traj}:  the transition maps between local trivializations
identifying the curves with, say, fixed curves $C_1, C_2$ 
involve identifications
\[ \M_\Gamma^i  \times 
\Omega^{0,1}(S_1, ( {u}|_{S_1})^* T {X})_{k-1,p,\lambda} \to 
\M_\Gamma^i \times \Omega^{0,1}(S_2, ( {u}|_{S_2})^* T {X})_{k-1,p,\lambda}  \] 
induced from varying diffeomorphisms $S_1 \to S_2$ depending on the curve $[C'] \in \M_\Gamma^i$. 
Differentiating with respect to $[C']$ induces a map to a Sobolev class of one lower, and so the transition maps are not differentiable.   However, the union of  $\cE_{\Gamma,u}^{k-1,p,\lambda}$
for maps $u'$ in $\B_\Gamma^{i,k,p,\lambda}$ is a smooth Banach vector bundle  with local trivialization 
induced by parallel transport. 

In order to achieve regularity, we assume that some of the edges
map to a Donaldson hypersurface.  Let 
$\ol{D} \subset \ol{X}$ be a Donaldson hypersurface 
so that $\ol{L}$ is exact in the complement of $\ol D.$ Recall from Corollary \ref{corr:existenceofD} that when $\ol{L}$ is rational, such a hypersurface exists.  We define $D$ to be the restriction of $\ol{D}$ to the open manifold $X$. As in Cieliebak-Mohnke \cite{cm:trans}:  each component of $u^{-1}(D)$ contains an edge $T_e, e \in \Edge_D(\Gamma)$.

The moduli spaces of treed disks are cut out locally by 
Fredholm sections:  Define  for each $i$ a section
\begin{multline}
  \F_\Gamma^i: \B_\Gamma^{i,k,p,\lambda} \to \cE_{\Gamma}^{i,k-1,p,\lambda}
   \times X^{\# \Edge_D(\Gamma)},  \\
  \quad ([C'],u') \mapsto
  \left(\olp_{j(C')} u'|S , \  
    \cct u' |_{T} +  \grad(\zeta(u'), 
  (\ev_e(u))_{e \in \Edge_D(\Gamma)}
  \right)\end{multline}
where $\zeta$ denotes either $\zeta_\white$, $\zeta_{\black,\Lambda_\pm}, \zeta_{L}$ or $\zeta_D$ depending on the type of edge.  
We drop the Sobolev constants $k,p,\lambda$ to simplify notation.
Embed
\[ 
\ul{0}: \ \B^{i}_\Gamma \times D^{\# \Edge_D(\Gamma)} \to 
\E^{i}_\Gamma
    \times X^{\# \Edge_D(\Gamma)} \] 
by the product of the zero section and 
a number of copies of the Donaldson hypersurface $D$.
The moduli space of maps over $\M_\Gamma^i$ is 
\[ \M_\Gamma^i(L) =  (\F_\Gamma^i)^{-1}(\ul{0}) ,\]
that is, the inverse image of the zero section 
times a number of copies of the Donaldson hypersurface.  The global moduli space
$\M_{\Gamma}(L)$ is obtained by patching together the local spaces $\M_{\Gamma}^i(L)$.  

We introduce the following terminology for linearized operators and regularity. 

\begin{definition}  The {\em  linearized operator} at any solution $u$ in some trivialization of the universal bundle is denoted
\begin{multline} \label{linop} \ti{D}_u: T_{(C,u)} \B^i_\Gamma \to \cE^i_{\Gamma,u} \times (TX/TD)^{\# \Edge_D(\Gamma)},  \quad \xi \mapsto \ccrho |_{\rho = 0} \cF_\Gamma^i(\exp_u(\rho \xi)) .\end{multline}
The definition is independent of the choice of local trivialization
of the universal bundle used.

\vskip .1in \noindent A map $u$ is {\em  regular} if
$\ti{D}_u$ is surjective, in which case $\M_\bGamma(L)$ is a
smooth manifold of dimension $ \Ind(\ti{D}_u)$ in an open neighborhood of
$u$.   

\vskip .1in \noindent The {\em  virtual dimension} of the moduli space $\M_\bGamma(L)$ at $u$ 
is the Fredholm index  of the linearized operator
\[ \vdim(\M_\bGamma(L)) := \Ind(\ti{D}_u) \] 
which is the honest dimension if every element is regular.  This ends the Definition.
\end{definition} 

The linearized operator is Fredholm, as for example shown in Wendl \cite[Lecture 4]{wendl:sft}. The restriction of $\ti{D}_u$ to variations of the map $\xi \in \Omega^0(S, u^*TX)$ is the linearized operator $D_u$ studied in McDuff-Salamon \cite{ms:jh} for holomorphic maps to closed symplectic manifolds.

The moduli space of treed maps is naturally a union over combinatorial 
type.  
For any integer $d \ge 0$, denote
\begin{equation} \label{dlocus}
\ol{\M}(L)_d = \bigcup_{\bGamma} \M_{\bGamma}(L) \end{equation}
the union of types with 
\begin{equation}
\label{rigidmap}
\vdim \M_\bGamma(L) + \codim(\M_\Gamma) = d . \end{equation}
That is,  $\M_\bGamma(L)$ is in the locus of expected dimension $d$,
allowing for deformations of domain which change the combinatorial type.  

The locus of regular maps has good smoothness properties.
 Denote by $\M^{\reg,i}_\bGamma(L) \subset \M_\bGamma^i(L)$  the locus of regular maps. 
By the implicit function theorem for smooth maps between Banach manifolds,  the locus of regular maps $\M^{\reg,i}_\bGamma(L)$ is a smooth manifold of expected
dimension.   The transition maps 
between the various moduli spaces 
\[ \M^{\reg,i}_\bGamma(L), \M_\bGamma^{\reg,i'}(L),  \quad \forall i,i' \ \text{so} \ \M_\Gamma^i \cap \M_\Gamma^{i'}\neq\emptyset \]
are
smooth  as all derivatives exist by elliptic regularity. 
\label{rep:transsmooth}

We will define the Chekanov-Eliashberg differentials and cobordism maps by counting rigid maps, defined as follows:  

\begin{definition} 
\begin{enumerate} 
\item For a pair $(Z,\Lambda)$, the locus of {\em  rigid} configurations ${\M}(\Lambda)_0$ of holomorphic maps to the symplectization $(\R \times Z, \R \times \Lambda)$ is the locus of maps
$u: C \to \R \times Z$ for which the expected dimension $\on{Ind}(\ti{D}_u) = 1$ and $\M_\Gamma$ represents a stratum of top dimension in the moduli space of domains.
%
\item  For a cobordism $(X,L)$ between $(Z_-,\Lambda_-)$  $(Z_+,\Lambda_+)$
the  locus of {\em  rigid} configurations ${\M}(L)_0$ is the locus of maps
$u: C \to X$ for which the expected dimension $\on{Ind}(\ti{D}_u)$ vanishes and $\M_\Gamma$ represents a stratum of top dimension in the moduli space of domains.
\end{enumerate}
\end{definition} 

The last condition in each item is equivalent to requiring that the lengths $\ell(e)$ of all edges $e \in \Edge(\Gamma)$ are finite and non-zero.    Note that ${\M}(\Lambda)_0$ lies in the locus in $\M(\Lambda)$ that is expected dimension zero, since equivalence is up to translation.

\subsection{Transversality} 

Now we regularize the moduli spaces as constructed in the previous section.   The domains of stable
treed buildings are automatically stable,  except for trivial cylinders:  

 \begin{lemma}  Let $(X,L)$ be a cobordism with concave end $(Z_-,\Lambda_-)$ and convex end $(Z_+, \Lambda_+)$ and $\ol{D} \subset \ol{X}$
 a Donaldson hypersurface satisfying the inequality of 
 Proposition \ref{prop:je}, so that $\ol{L}$ is exact in the complement.  
 Let $u = (u_v), v \in \Ver(\Gamma)$ be a holomorphic building in $X$.  
\begin{enumerate} 
\item If $u_v: C_v \to X$ is a non-constant sphere resp. disk component 
then $u_v$ is stable, that is, $v$ has at least three adjacent edges resp. three adjacent open edges or one adjacent open edge and one adjacent closed edge; 
\item If $u_v: C_v \to \R \times Y_\pm$ is a sphere resp. disk component that is not a trivial cylinder or strip then $u_v$ is stable, that is, $v$ has at least three adjacent edges resp. three adjacent open edges or one adjacent open edge and one adjacent closed edge; 
\end{enumerate} 
\end{lemma}

\begin{proof}  The claim for sphere components $u_v: C_v \to X$ follows from the condition in
    Proposition \ref{prop:je} which guarantees at least three intersection points with the Donaldson hypersurface, and the condition in 
    Definition \ref{def:treedbuildings} that each such intersection corresponds to an edge.  The inequality for disks in $X$ follows from the 
   exactness of $\Pi$ in $Y - D$.   The justification for components mapping to $\R \times Z_\pm$ is similar. 
\end{proof}

Domain-dependent perturbations are defined as maps from 
the universal curve.  Suppose that $\Gamma$ is a stable type
of domain and denote by $\M_\Gamma$ the moduli space of treed disks of type
$\Gamma$. Over $\M_\Gamma$ we have a universal curve $\cU_\Gamma$
which decomposes into tree and surface parts
\[ \cU_\Gamma  = \{ [ C,z], z \in C \} , \quad \U_\Gamma = \cS_\Gamma \cup \cT_\Gamma \]
depending on whether the point $z$ lies in the surface $S$ or tree $T$
part of $C$.

\begin{definition} \label{def:ddp}
Let $(X,L)$ be a decorated cobordism as in Definition \ref{def:cylnearinf}. 
A {\em  domain-dependent perturbation} is a
  collection of maps
\[ \begin{array}{lllll} 
 J_\Gamma &:& \cS_\Gamma &\to & \J_{\cyl}(X) \\ 
 \zeta_{L,\Gamma} &:& \cT_{L,\Gamma}&  \to &  \Vect(L) \\ 
 \zeta_{\white,\pm,\Gamma} &:& \cT_{\white,\pm, \Gamma} & \to & 
  \Vect({\cR}(\Lambda_\pm)) 
\end{array} \]  
consist of a domain-dependent cylindrical almost complex
structure $J_\Gamma$ and domain-dependent vector fields
$\cT_{L,\Gamma}$ and $\cT_{\diam,\Gamma}$
so that the
corresponding maps 
\begin{eqnarray*}
\cS_\Gamma \times TX &\to& TX \\
  \cT_{L,\Gamma} \times L
&\to & TL \\ 
\cT_{\white,\pm,\Gamma} \times \cR(\Lambda_\pm) &\to & T \cR(\Lambda_\pm)
 \end{eqnarray*} 
are smooth. The vector fields $\zeta_{\pm,\Gamma}$ are of the form in Definition \ref{def:cylnearinf} on each edge depending on the label of the edge.  Later, we will assume
that the perturbations vanish on an open subset containing the zeros of the vector fields
and the intersections $\cS_\Gamma \cap \cT_\Gamma \subset \cU_\Gamma$.

One may also allow Hamiltonian perturbations.  These are not necessary for regularization of the moduli spaces in the case of Legendrians with embedded projection but is necessary for those with immersed Lagrangian projections. The use of Hamiltonian perturbations is also useful for our later localization computations.
Let 
\[ \Vect_h(X,U) \subset \Vect(X) \]
denote the space of Hamiltonian vector fields vanishing
on an open subset $U$ (which later will be taken to include the
Donaldson hypersurface used for regularization).  
 A domain-dependent
Hamiltonian perturbation is a one-form
\[ H_\Gamma \in \Omega^1(\cS_\Gamma, \Vect_h(X,U)) .\]
A {\em  domain-dependent perturbation} is a tuple
\[ P_\Gamma = (J_\Gamma,\zeta_{L,\Gamma}, \zeta_{\white,-,\Gamma},
\zeta_{\white,+,\Gamma}, H_\Gamma) \] 
as above. 
\end{definition}

For a domain-dependent perturbation $P_\Gamma$, the \textit{moduli space of treed broken disks} of combinatorial type $\Gamma$ is defined as 

\[ \cM_{\Gamma,P_\Gamma}(L) = \left\{ 
\begin{array}{l|l}
 & [C]\in \cM_\Gamma,u:\cU_\Gamma|_{[C]} \to X\\ (u,[C])& z\in \cU_\Gamma|_{[C]_e}, t\in \cU_\Gamma|_{[C]_e} \text{ where } v\in V(\Gamma), e\in E(\Gamma)  \\
 & du_v(z) + \cJ_\Gamma(z) \circ du_v \circ j_C + H_\Gamma(u_v(z)) = 0\\ & du_e(t) + \zeta_*(t) = 0 
\end{array}
   \right\}. \]
To keep the notation simpler we chose to denote all the trajectories on the edges by only one equation above.

Our perturbations of the almost complex structure are required to be small in the following sense.

\begin{definition} Define a (discontinuous) function recording the number of interior edges
meeting the surface component containing the given point:
\[ \label{stab} n_D:  \cS_\Gamma \to \Z_{\ge 0 } ,  \quad z \mapsto   
\max_{ z \in S_v} \# \{ e \in \Edge_D(\Gamma), T_e \cap S_v \neq \emptyset \}  .\] 
Suppose
that the Donaldson hypersurface  has degree $k \in \Z_{> 0}$ in the sense that  
\[ [\ol{D}]^\dual = k [\ol{\omega}] \in H^2(\ol{X})  \].
A domain-dependent almost 
complex structure $J_\Gamma: \cS_\Gamma \to \J(X) $ is {\em  stabilizing} if
it takes values in the set for which there are no holomorphic spheres contained in the Donaldson divisor, 
that is, 
\[ J_\Gamma(z) \in \J^{n_D/k}(X) , \quad \forall z \in \cS_\Gamma \]
with notation from Proposition \ref{prop:je}.
\end{definition}

The set  of perturbations satisfying the stabilizing condition contains a non-empty open subset of domain-dependent almost complex structures, by definition.

We apply a Sard-Smale argument to show that the moduli space of
expected dimension at most one is cut out transversally for generic perturbations.
For this, it is convenient to adopt a space of perturbations introduced
by Floer.   
\begin{definition} Let $  \ul{\eps} = (\eps_0,\eps_1,\eps_2,\ldots)$
be a sequence of positive real numbers.   For a smooth function $f: \R \to \R$ define the $\ul{\eps}$-norm 
\[ \Vert f \Vert_{\ul{\eps}} = \sum_{j \ge 0} \eps_j \Vert f \Vert_{C^j} \] 
denote 
the (possibly infinite) {\em  Floer norm} of $f$, as in \cite[Appendix B]{wendl:sft}.
\noindent For sequences $\ul{\eps}$ which converge to  zero sufficiently fast,  the space $C^{\ul{\eps}}$ of functions with finite Floer norm contains
bump functions supported in arbitrarily small neighborhoods of any point.  
Similar spaces are defined for functions on any manifolds, sections of vector
bundles and so on. 
Let 
\[ \cP^{\ul{\eps}}_\Gamma = \{ P_\Gamma \in C^{\ul{\eps}} \} \]
denote the space of perturbations for type
$\Gamma$ of finite $\ul{\eps}$-norm.  Let 
\[ \B^{\univ}_{k,p,\ul{\eps},\lambda} = \M_\Gamma \times
\Map(C,X,L)_{k,p,\lambda} \times \cP^{\ul{\eps}} \]
be the space of triples $(C,u,P)$ where $P = (J,\zeta)$ is a
perturbation, and the subscript $k,p,\lambda$ indicates that the
restriction of $u$ to each component $T_e, S_v$ is in the weighted Sobolev class
$W^{k,p,\lambda}$.  Denote by
\begin{eqnarray*}
 \E^{\univ,i} :=  \E^{\univ,i}_{\Gamma,k-1,p,\ul{\eps},\lambda} &:=& \bigcup_{(C,u,P) \in \B^{\univ,i}_{\Gamma,k,p,\ul{\eps},\lambda}}
  \Omega^{0,1}(C, (u|S)^*
  TX)_{k-1,p,\lambda} \\
&:=& \bigcup_{(C,u,P) \in \B^{\univ}} \Omega^{0,1}(S, u^*
     TX)_{k-1,p,\lambda} 
\oplus \Omega^1( T_L, (u|T_L)^* T L )_{k-1,p} \\
&& \oplus \Omega^1( T_{\white,-}, (u|T_{\white,-})^* T \cR(\Lambda_-) )_{k-1,p} \\
&& \oplus \Omega^1( T_{\white,+}, (u|T_{\white,+})^* T \cR(\Lambda_+) )_{k-1,p} \\
&& \oplus \Omega^1( T_D, (u|T_D)^* T X )_{k-1,p}.
\end{eqnarray*}
Define a section 
\begin{multline}
    \cF_\Gamma^{\univ,i}: \B^{\univ,i}_\Gamma\to \E^{\univ,i}_\Gamma
    \times X^{\# \Edge_D(\Gamma)}
    ,   \\
    (C', u , P_\Gamma) \mapsto \left( \olp_{j(C'),J_\Gamma,H_\Gamma}
u_S , \left( \ccs + \zeta_{C',\Gamma,\cdot}( u_{T_\cdot} )\right)
,  (\ev_e(u))_{e \in \Edge_D(\Gamma)} \right) \end{multline}
where $\olp_{j(C'), J_\Gamma}$ is the Cauchy-Riemann operator defined by 
the choice of curve $C'$ in the domain, and $\zeta_{C', \Gamma,\cdot} $
is the perturbed vector field determined by the choice of domain $C'$
and $\cdot$ ranges over types of edges. 
Embed
\[ 
\ul{0}: \ \B^{\univ,i}_\Gamma \times D^{\# \Edge_D(\Gamma)} \to
\E^{\univ,i}_\Gamma
    \times X^{\# \Edge_D(\Gamma)} \] 
by the product of the zero section and 
a number of copies of the Donaldson hypersurface $D$.  This ends the Definition. \end{definition}

\begin{theorem} \label{transverse}
  The section $\F^{\univ,i}_\Gamma$ is  transverse to the zero section $\ul{0}$ at a map  $u: C \to X$ as long as there are no constant spheres $S_v, v \in \Ver(\Gamma)$ with more than one edge $e \in \Edge_D(\Gamma)$ attached.
\end{theorem}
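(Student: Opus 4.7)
The plan is to establish surjectivity of the universal linearization
\[
D_{(C',u,P_\Gamma)}\F^{\univ,i}_\Gamma : T_{(C',u)} \B_\Gamma^i \oplus T_{P_\Gamma}\cP_\Gamma^{\ul{\eps}} \to \cE_{\Gamma,u}^{i,k-1,p,\lambda} \times \bigoplus_{e \in \Edge_D(\Gamma)} T_{u(z_e)}X/T_{u(z_e)}D
\]
at every zero of $\F^{\univ,i}_\Gamma$. The linearization splits naturally into a Cauchy-Riemann piece on each surface component $S_v$, a gradient-flow linearization on each edge segment $T_e$, and an evaluation piece at the Donaldson points $z_e, e \in \Edge_D(\Gamma)$. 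Since the linearization restricted to variations of $u$ alone is a Fredholm operator (as in Wendl \cite{wendl:sft}), it suffices to show that the variations in $P_\Gamma = (J_\Gamma,\zeta_\Gamma,H_\Gamma)$ alone cover any element of the cokernel of the fixed-perturbation operator $\ti{D}_u$.

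The argument will proceed component by component. First I would treat the non-constant surface components $S_v$: choose an injective point $z_v \in S_v^\circ$ (a smooth point where $du_v(z_v)$ is injective and $u_v(z_v)$ avoids the images of all other components and the special points), whose existence follows by standard unique continuation for pseudoholomorphic curves (McDuff-Salamon \cite{ms:jh}). If an element $\eta$ of the cokernel of the Cauchy-Riemann linearization were nonzero at $z_v$, a bump-supported variation of $J_\Gamma$ near $z_v$ pairs nontrivially with $\eta$, giving the required contradiction; the bump functions lie in the Floer space $C^{\ul{\eps}}$ by the standard choice of sequence $\ul{\eps}$. For edges $T_e$ labelled by $L$, $\cR(\Lambda_\pm)$, or $D$, surjectivity onto the cokernel of the linearized gradient flow is achieved by pointwise variations of the corresponding vector field $\zeta_{L,\Gamma}$, $\zeta_{\white,\pm,\Gamma}$, or $\zeta_{D,\Gamma}$ along $T_e$: at any non-critical interior point of the trajectory, a bump-supported variation can produce an arbitrary tangent perturbation. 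Surjectivity onto the Donaldson evaluation pieces is similarly standard: the evaluation map $\ev_{z_e}$ is already a submersion onto $X$ after perturbing $u$ in a neighborhood of the preimage, and projecting to $TX/TD$ preserves surjectivity since $D$ is a codimension-two symplectic submanifold.

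The main obstacle is ghost (constant) sphere components $S_v \subset S$. On such a $S_v$ the Cauchy-Riemann linearization reduces to $\olp$ on sections of the trivial bundle $(u(S_v))\times T_{u(S_v)}X \to S_v$, and variations of $J_\Gamma$ give zero at $u_v$ since $Du_v = 0$; hence the cokernel on a ghost sphere cannot be killed by perturbations of $J_\Gamma$. However, each Donaldson marking $T_e \cap S_v$ contributes an evaluation constraint $\ev_e(u) \in D$, and the matching condition at nodes forces the value $u(S_v)$ to lie in $D$. If a ghost $S_v$ carries at most one edge $e \in \Edge_D(\Gamma)$, it must by stability carry at least two additional boundary or interior nodes connecting to non-constant components whose images meet $u(S_v)$; then transversality is reduced to the matching evaluation problem on the adjacent non-constant components, which was handled above by injective-point arguments and variation of $H_\Gamma$ on those components. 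The hypothesis excluding constant spheres with two or more Donaldson edges is precisely what rules out the configuration in which the normal bundle pairing $TX/TD$ must be surjected by maps that factor through the same constant point; in that bad case the evaluation constraints to $D$ are incompatibly redundant and no perturbation in $\cP_\Gamma^{\ul{\eps}}$ suffices.

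Finally, once pointwise surjectivity is established at $(C',u,P_\Gamma)$, a standard partition-of-unity argument patches together the local variations of $P_\Gamma$ into a single element of $T\cP_\Gamma^{\ul{\eps}}$ that surjects onto the cokernel, completing the proof. I expect the ghost-sphere analysis to be the delicate part, following the template of Cieliebak-Mohnke \cite{cm:trans} but adapted to the treed setting and the Legendrian boundary condition; the rest is essentially a direct transcription of the classical Sard-Smale transversality argument to the universal moduli space $\B^{\univ,i}_\Gamma$.
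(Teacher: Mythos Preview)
Your outline is broadly correct for non-constant components whose projection to $Y$ is non-constant, and for the edge and evaluation pieces; but there is a genuine gap in the case of surface components that are contained in a fiber of the projection $p:\R\times Z\to Y$ (trivial strips and cylinders, and more generally punctured disks mapping into a single fiber $p^{-1}(y)\cong\C^\times$). These components are non-constant, so your injective-point argument would seem to apply; however, the perturbation space $\J_{\cyl}(X)$ consists only of \emph{cylindrical} almost complex structures, which by definition are standard on each fiber and are determined in the horizontal direction by a choice of $\ol J$ on $Y$. Consequently any variation $K_\Gamma\in T_{J_\Gamma}\J_{\cyl}(X)$ vanishes on the vertical subbundle $\ker(Dp)\subset TX$, and for a component $u_v$ contained in a fiber one has $\d u_v$ taking values entirely in that vertical subbundle, so $K_\Gamma\,\d u_v\,j\equiv 0$. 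Your bump-supported variation of $J_\Gamma$ therefore pairs to zero with any putative cokernel element on such a component, and the argument collapses.

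The paper handles this case (which it flags as ``typically the hardest part of the transversality argument in symplectic field theory'') by a direct computation: on a fiber component the linearized operator splits as $D_{u_v}=D^{\bv}_{u_v}\oplus D^{\bh}_{u_v}$ into a vertical piece (a $\olp$ operator on a punctured disk with values in $\C$ and real boundary condition) and a trivial horizontal piece with values in $T_{\ol u_v}Y$ and boundary in $T\Pi$. One then checks by hand that both summands are surjective---the vertical cokernel consists of holomorphic $(0,1)$-forms on the disk vanishing at the punctures, hence is zero. This bypasses the need for any perturbation on those components. You then still owe an argument that the matching conditions at nodes joining such fiber components to the rest of the configuration are transversally cut out; the paper does this by an induction over the subtree of fiber components, using that the kernel of $D_{u_v}$ surjects onto the tangent space at each node.
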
 

\begin{proof}
We treat transversality for constant, non-constant, and trivial strip components  separately.  Let $\eta$ be an element of the cokernel
$\coker(\ti{D}_u)$ of the linearized operator of \eqref{linop}.  We write the
  restriction of $\eta$ to the sphere or disk components
  resp. edges as 
  \[ \eta_v \in \Omega^{0,1}(S_v, u_v^* TX), v \in \Ver(\Gamma) \] 
  and
  \[ \eta_e \in \Omega^1(T_e, u_e^* TL), \ 
  e \in \Edge_L(\Gamma) , \quad 
  \eta_e \in \Omega^1(T_e, u_e^* T\Lambda), \ 
  e \in \Edge_\white(\Gamma)  . \]  
  We wish to show that all of the components $\eta_v$ and $\eta_e$ vanish. 

\vskip .1in \noindent {\em Step 1:  We show that the one-form vanishes on surface components   on which the map is non-constant.}   Suppose
$v \in \Ver(\Gamma)$ with map $u_v: = u | S_v$ mapping to $X$ such
that $u_v$ is non-constant.  Then $D_{u_v}^* \eta_v = 0 $ 
weakly and $\eta_v$
is perpendicular to variations $ K_\Gamma \in T_{J_\Gamma} \J_\Gamma(X) $ which are
domain-dependent.  By \cite[Lemma 3.2.2]{ms:jh}, $\eta_v$
vanishes in an open neighborhood of any point $z \in S_v$ on which $\d u_v $ is
non-zero.  On the other hand, we may identify $\eta_v$ with an element in the kernel $\ker(\ti{D}_{u_v}^*)$ of the adjoint, and so 
$\eta_v$ vanishes by the unique continuation principle in 
\cite[Section 2.3]{ms:jh}.  (This holds even for $\eta$ in the cokernel of the restriction of the linearized operator to the space of sections $\xi$
are constrained to vanish on the intersections $S \cap T$, as in 
\cite[Section 3.4]{ms:jh}.)

\vskip .1in \noindent {\em Step 2:  We show that the one-form vanishes on a surface component mapping to the neck on which 
the projection to the base is non-constant.  }   Suppose $v \in \Ver(\Gamma)$ with map $u_v: = u|  S_v$ such that  
\[ \ol{u}_v:= {p} \circ u_v: S_v \to Y \]
is non-constant.  The restriction $\eta_v$ is a weak solution to $D_{u_v}^* \eta_v  = 0 $  
and $\eta_v$ is perpendicular to one-forms produced by variations $ K_\Gamma$ 
in $J_\Gamma$.  Such variations satisfy $J_\Gamma K_\Gamma = - K_\Gamma J_\Gamma$
and vanish on the first factor in the decomposition 
\[ u^*TX \cong u^* \ker (D {p}) \oplus \ol{u}^* T Y .\]   
Suppose $\eta_v(z) \neq 0$.  Choose a $(J_\Gamma,j)$-antilinear map from $T_z S$ to $T_{u(z)} X$
pairing non-trivially with $\eta_v(z)$, which we may take
to be of the form $K_\Gamma (z) \d_z u j(z)$ for some $K_\Gamma(z)$ since the second component of $\d_z u$
is non-vanishing.  Using a cutoff function, we obtain a one-form 
$K_\Gamma \d u j$ that pairs non-trivially with $\eta_v$, which is a contradiction.  Thus $\eta_v$
vanishes.

\vskip .1in \noindent {\em Step 3:    The one-form vanishes on trivial cylinders an strips. }   The case of components mapping to fibers in the neck piece, 
that is, to the images of Reeb chords or orbits, requires special consideration and is typically the hardest part of the transversality argument in symplectic field theory.  Thus for the purposes of this paragraph we assume  $X = \R \times Z$ and is equipped with a projection $p: X \to Y$.  Any disk $u_v: S_v \to X$ contained in a
  fiber 
\[ {p}^{-1}(y) \cong \C^\times, y \in Y \]  is by assumption a punctured disk bounding 
  \[ L \cap {p}^{-1}(y) \cong
  e^{2\pi i\theta_1} \R_+  \cup 
  \ldots \cup e^{2\pi i \theta_k} \R_+  .\] 
Here the number of punctures is at least two, so that if the number of punctures is equal to two then the component $u_v$ is a trivial cylinder.   The linearized operator $D_{u_v}$ is a direct sum 
\[   D_{u_v} \cong D^{\bv}_{u_v} \oplus D^{\bh}_{u_v} \] 
of a trivial horizontal part $D^{\bh}_{u_v} \cong \olp$ taking values  in $T_{\ol{u}_v} Y$ and a vertical part
$D^{\bv}_{u_v}$ taking values in $\C$ which has boundary 
  condition $\R$.  We examine 
  the kernel and the cokernel of each part separately.   The kernel 
  of $D_{u_v}^{\bv} \cong \olp$ on $S_v $ consists of bounded holomorphic functions $\xi: S_v \to \C $
  with real boundary values.  Therefore $\ker(D_{u_v}^{\bv} )$ is the one-dimensional real vector space space of constant functions with values in the reals.     On the other hand, the cokernel 
   $\coker(D_{u_v}^{\bv} )$
   may be identified with holomorphic 
  $(0,1)$-forms $\eta$ of Sobolev class $-k,p^*,\lambda$ enforcing exponential decay (since constant functions near infinity lie in the domain).  In the local coordinate on $\ol{S}_v$ near each puncture such a holomorphic $(0,1)$-form is vanishing at the puncture.  Since there are no 
  such holomorphic $(0,1)$-forms $\eta$ on a disk $S_v$, the cokernel of $D_{u_v}^{\bv}$ vanishes.  
  We remark that the kernel and cokernel of the operator on the punctured 
  disk may be identified with the kernel and cokernel of the operator on the 
  disk by removal of singularities as in Abouzaid \cite[Lemma 4.7]{ab:ex}.  By Oh \cite{oh:rh}, any rank one Riemann-Hilbert problem on $\ol{S}$ has either vanishing kernel or vanishing cokernel.
The horizontal part of the linearized operator on $S_v$ is surjective for similar reasons.

\vskip .1in \noindent {\em Step 4:    The one-form vanishes on both types of edges.}  We apply the fundamental theorem of ordinary differential equations.   For any edge $e \in \Edge_{\white}(\Gamma)$ the cokernel of the operator $D_{u_e}$ on zero-forms on $T_e$ is identified, via Hodge star,  with the space of solutions to the adjoint 
  equation 
\[ D_{u_e}^* \xi_e = \nabla \xi_e +  \nabla \grad(\zeta(u_e)) \d
  s  \]
  with vanishing at the endpoints; there are no such solutions. 
  In the case without endpoints $T_e \cong \R$ that may occur in a broken trajectory, transversality follows from the 
  Morse-Smale assumption on the Morse function $f_\white$.     On the other hand, $D_{u_e}$ has kernel isomorphic to
  $T_{u_e(z)} L$ for any point $z \in T_e$, via evaluation.

\vskip .1in \noindent {\em Step 5:   The matching conditions are transversally cut out.  }  Consider a tree $\Gamma_0 \subset \Gamma$ such that for each 
  vertex $v \in \Ver(\Gamma_0)$,
  the component $u_v := u |S_v$
  is vertical.  The argument of the previous section shows that such configurations are transversally cut out if each $S_v$ is a disk or sphere and $\Gamma_0$ is a tree:  Let $u_0$ denote the restriction of $u$ to $S_0$
  and $\ker(D_{u_0})'$ the kernel of the linearized operator acting on
  sections without matching conditions enforced at the nodes of $S_0$.   By the previous paragraph $\ker(D_{u_0})'$ is a sum of factors of $\C \oplus T_{p \circ u(C)} Y$ for each 
  sphere component and $\R \oplus T_{p \circ u(C)} \Pi$ for each
  disk component.  Transversality is equivalent to the condition that the map 
  \[ \ker(D_{u'})' \to 
  \bigoplus_{e \in \Edge_{L,\fin}(\Gamma)} 
  T_{p \circ u(C)} Y \oplus 
  \bigoplus_{e \in \Edge_{\white,\fin}(\Gamma)} 
  T_{p \circ u(C)} \Pi
  \] 
  is surjective.  This follows from an induction  (for each connected component of the subset $S_0$ of the domain $S$ on which the horizontal projection of the map is constant). For each such component, the induction 
  starts with vertices $v \in \Ver(\Gamma_0)$ contained in terminal edges of $\Gamma_0$, so that the corresponding disk $S_v$ has a single node $w(e) \in S_v$ represented by an edge $e$ in $\Gamma_0$.
  Surjectivity of the linearized evaluation map for this component $S_v$ implies that the matching condition 
  at $w(e)$ is cut out transversally; the 
  inductive hypothesis then implies that the remaining matching conditions are transversally cut out.

  Putting everything
  together, we obtain that the operator $D_u'$ whose domain consists
  of sections without matching conditions is surjective, and  the
  kernel of $D'_{u_v}$ surjects onto any intersection point $TL |_{S_v \cap
  T}$ or
  $T\Pi |_{S_v \cap
  T}$.   Similarly the kernel of $D'_{u_e}$ surjects onto any
  intersection point $TL |_{T_e \cap S}$.   It follows that the matching
  conditions
  at the nodes are cut out transversally.    
  \end{proof}

Domain-dependent perturbations help us obtain transversality of moduli spaces by breaking the symmetry of the auxiliary choices such as the almost complex structure and Morse functions. Although beneficial for obtaining transversality, it comes with a cost:  we lose expected symmetry such as the divisor axiom for curve counts. To overcome this difficulty we use multi-valued domain dependent perturbation which allows us to restore part of the expected symmetry back while still preserving transversality.

\begin{definition}

A {\em  multi-valued domain-dependent perturbation} is a formal sum $P_\Gamma$ of domain-dependent perturbations $\ti{P}_{\Gamma,i}$ with real coefficients
\begin{equation} \label{mval} P_\Gamma := \sum_{i=1}^{k_\Gamma} c_i \ti{P}_{\Gamma,i} \end{equation}
where 
\[ \sum_{i=1}^{k_\Gamma} c_i = 1, \quad c_i > 0 , \forall i . \] 
%
We call the terms $\ti{P}_{\Gamma,i}$ {\em  sheets} of the multi-valued perturbation and the terms $c_i$ are the {\em  weights} of the corresponding sheets.   Two multi-valued perturbations are considered equivalent
if they are related by replacements of the form 
\begin{equation}\label{addsheets}
     c_i \ti{P}_{\Gamma,i} + c_j \ti{P}_{\Gamma,j} = (c_i + c_j) \ti{P}_{\Gamma,i}
= (c_i + c_j) \ti{P}_{\Gamma,j}
\end{equation}
whenever $\ti{P}_{\Gamma,i} = \ti{P}_{\Gamma,j}$.  The number of sheets is the minimum of the number
$k_\Gamma$ over all representatives, that is, the number of distinct $\ti{P}_{\Gamma,j}$.  From now on the word {\em perturbation}
will mean a multi-valued domain-dependent perturbation.
\end{definition}

We will choose perturbations via an inductive process based on the combinatorial type of the domain. We will require that the perturbations for each type agree with those 
already chosen on the boundary.  The following
morphisms of graphs $\Gamma_1 \to \Gamma_2$ correspond to inclusions
of $ \ol{\M}_{\Gamma_1}^{< E}(\Lambda)$ in the formal boundary of
$ \ol{\M}_{\Gamma_2}^{< E}(\Lambda)$.
\begin{definition} An {\em  elementary morphism} of domain types
  $\Gamma_1 \to \Gamma_2$ is one of the following.
\begin{enumerate} 
\item {\rm (Collapsing or breaking edges)}  An isomorphism $\Gamma_1 \to \Gamma_2$ where $\Gamma_2$ is
  obtained from $\Gamma_1$ by removing an edge $e$ from a set
  $\Edge_0(\Gamma_1)$ or $\Edge_\infty(\Gamma_1)$  and placing it by an edge with finite length in $\Edge(\Gamma_2)$; corresponding to a degeneration of $C$ in which the length $\ell(e)$  of
  $e \in \Edge(\Gamma_2)$ becomes infinite or zero. 
\item {\rm (Cutting edges)}  A morphism $\Gamma_1 \to \Gamma_2$ where $\Gamma_2$ is
  obtained from $\Gamma_1$ by identifying two vertices $v_-,v_+$ joined by an
  edge $e$ which is then collapsed, corresponding to a degeneration
  of $C$ in which a component $S_v \subset S$ develops a node, either in the
  interior or boundary depending on the collapsed edge.  Thus $S_v$
  becomes the union of two components $S_{v_1}, S_{v_2}$.
\end{enumerate} 
\end{definition}

\begin{remark} \label{rem:induct}
Perturbations are constructed
by an induction on the dimension of the source moduli space $\cS_\Gamma$. Suppose that single-valued perturbations
    $P_{\Gamma'}$ have been chosen for all types
    $\Gamma' \prec \Gamma$.  A natural gluing procedure gives
    perturbations $P_\Gamma$ in an open neighborhood $U_\Gamma$ of the boundary of
    $\ol{\M}_\Gamma$.  The gluing theorem for linearized operators 
    (see, for example, Wehrheim-Woodward \cite[Theorem 2.4.5]{orient} for the closed case) implies that 
    for $U_\Gamma$ sufficiently small, the perturbation $P_\Gamma$
    is regular over $U_\Gamma$ if all the perturbations $P_{\Gamma'}$
    are regular.     The case of multi-valued perturbations is the same,
    with the following caveat:   It could be the boundary 
    of $\M_{\Gamma}$ is disconnected, so that there are for example
    two types $\Gamma_1,\Gamma_2 \preceq \Gamma$ so that $P_{\Gamma_1}$
    and $P_{\Gamma_2}$ have different number of sheets, say $k_1$
    and $k_2$ respectively.  In this case there is an obvious replication process which $P_{\Gamma_1}, P_{\Gamma_2}$ are each replaced by 
    multi-valued perturbations with $k_1 k_2$ sheets, with each of the 
    original sheets in $P_{\Gamma_1}$ repeated $k_2$ times, and similarly for $P_{\Gamma_1}$.  The gluing construction then proceeds as before. 

    Similarly, given a type $\Gamma$ obtained from $\Gamma_1,\Gamma_2$
    by gluing along an edge, any multivalued perturbations $P_{\Gamma_1}, P_{\Gamma_2}$ for 
$\Gamma_1,\Gamma_2$ with $k_1,k_2$ sheets respectively induces a 
multivalued perturbation for type $\Gamma$ with $k_1 k_2$ sheets. 
\end{remark}

\begin{definition}  \label{def:preg}  A perturbation $P_\Gamma$ for maps of domain type $\Gamma$  is {\em  regular}   if for every map type $\bGamma$ with underlying domain type $\Gamma$ so that $\cM_{\bGamma}$  has expected dimension at most one, every map 
 $u \in \cM_{\bGamma}$ is regular. 
\end{definition}

\begin{theorem} \label{twolevels} Suppose that regular perturbations
    $P_{\Gamma'}$ have been chosen for all types
    $\Gamma' \prec \Gamma$.  Then there exists a comeager set 
    $\cP_\Gamma^{\reg} \subset \cP_\Gamma$ of
    perturbations $P_\Gamma$ agreeing with the glued perturbations
    arising from $P_{\Gamma'}$ in an open neighborhood $U_\Gamma$ of the boundary of
    $\ol{\M}_\Gamma$ such that any $P_\Gamma \in\cP_\Gamma^{\reg}  $ is regular in the 
    sense of Definition \ref{def:preg}.
\end{theorem}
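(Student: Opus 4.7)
The plan is to apply a standard Sard--Smale argument to the universal moduli space, starting from Theorem \ref{transverse}, while carefully handling the constraint that $P_\Gamma$ must agree with the previously-chosen perturbations $P_{\Gamma'}$ near the boundary $\partial \ol{\M}_\Gamma$.

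First I would fix a map type $\bGamma$ with underlying domain type $\Gamma$ such that $\vdim \M_\bGamma(L) + \codim(\M_\Gamma) \le 1$, and consider the universal zero set
\[
\ZZ_{\bGamma}^{\univ,i} := (\F_\Gamma^{\univ,i})^{-1}(\ul{0}) \subset \B^{\univ,i}_{\Gamma, k,p,\ul{\eps},\lambda}.
\]
By Theorem \ref{transverse}, the section $\F^{\univ,i}_\Gamma$ is transverse to $\ul{0}$ at every $(C', u, P)$ such that $u$ has no constant sphere bubble carrying more than one edge in $\Edge_D(\Gamma)$. For configurations in expected dimension at most one, any such bubble would force $\codim(\M_\Gamma) \ge 2$ once the stability constraint on $\cS_\Gamma$ near that ghost sphere is taken into account (a ghost sphere with $\ge 2$ $D$-edges represents a codimension $\ge 2$ condition, which can be shown by the index computation in the proof of Lemma \ref{nospheres}), so such maps simply do not appear in the dimension range of interest. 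After deleting this exceptional locus, $\ZZ_{\bGamma}^{\univ,i}$ is a smooth Banach manifold.

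Next I would consider the projection
\[
\pi_{\bGamma}^i: \ZZ_{\bGamma}^{\univ,i} \to \cP^{\ul{\eps}}_\Gamma, \qquad (C', u, P) \mapsto P.
\]
The kernel and cokernel of $D\pi_{\bGamma}^i$ at $(C',u,P)$ are identified with the kernel and cokernel of the linearized operator $\ti{D}_u$ of \eqref{linop}, so $\pi_{\bGamma}^i$ is Fredholm of the same index as $\ti{D}_u$. Applying the Sard--Smale theorem in Floer's $C^{\ul{\eps}}$-category (choosing $\ul{\eps}$ to decay fast enough that $C^{\ul{\eps}}$ is dense and supports bump functions) produces a comeager subset $\cP_\Gamma^{\reg,i,\bGamma} \subset \cP^{\ul{\eps}}_\Gamma$ of regular values. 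Intersecting over the countably many $(i,\bGamma)$ in the relevant dimension range (there are only countably many, since both energy and homotopy class are bounded via the area-Chern relations of \ref{p1}--\ref{p3}) gives a comeager set $\cP_\Gamma^{\reg,\free}$ of perturbations regular on the interior.

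Finally I would impose the boundary-matching condition. The glued perturbation from the $P_{\Gamma'}, \Gamma' \prec \Gamma$ is already defined on some open neighborhood $U_\Gamma$ of $\partial \ol{\M}_\Gamma$; by the inductive hypothesis and the gluing theorem for linearized operators (cf.\ Remark \ref{rem:induct}), this glued perturbation is regular on $U_\Gamma$, after possibly shrinking $U_\Gamma$. I would then run the Sard--Smale argument only on the subspace
\[
\cP^{\ul{\eps}}_{\Gamma, \partial} := \{P_\Gamma \in \cP^{\ul{\eps}}_\Gamma \;|\; P_\Gamma \text{ agrees with the glued perturbation on } U_\Gamma\},
\]
which is an affine subspace modelled on the Banach space of $C^{\ul{\eps}}$-perturbations supported in $\ol{\M}_\Gamma \setminus U_\Gamma'$ for some smaller neighborhood $U_\Gamma' \Subset U_\Gamma$. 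Since the perturbation-variations used in the proof of Theorem \ref{transverse} (variations of $J_\Gamma$, $\zeta_\cdot$, $H_\Gamma$) can be localized by cutoff functions on $\M_\Gamma$ away from $U_\Gamma'$, and since the maps one wishes to regularize lie in the complement of $U_\Gamma$, the universal surjectivity argument still applies verbatim within this affine subspace. The resulting comeager set $\cP_\Gamma^{\reg}$ is then the desired set.

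The main obstacle is the joint control of (a) the boundary-matching condition and (b) the potentially multi-valued nature of the perturbations coming from disconnected boundary strata as in the discussion following Definition \ref{def:preg}; for the latter one passes to a common refinement with $k_1 k_2$ sheets and applies the argument sheet-by-sheet, but one must check that the replication respects the inductive hypothesis. Once this bookkeeping is in place, the transversality step itself is standard.
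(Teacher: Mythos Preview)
Your proposal is correct and follows essentially the same Sard--Smale strategy as the paper: use Theorem \ref{transverse} to get a smooth universal moduli space, project to $\cP^{\ul{\eps}}_\Gamma$, apply Sard--Smale, and intersect over the countably many map types $\bGamma$ of expected dimension at most one. The paper's proof is considerably terser and does not spell out the boundary-matching step or the multi-valued bookkeeping, both of which you handle explicitly and correctly via Remark \ref{rem:induct}. One small quibble: your invocation of Lemma \ref{nospheres} for the ghost-sphere-with-multiple-$D$-edges issue is not quite the right reference (that lemma is about punctured spheres meeting $Y_-$, not constant spheres with coincident $D$-intersections); the standard Cieliebak--Mohnke codimension count is what you want here, but this does not affect the overall argument.
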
 

\begin{proof}  The statement of the Theorem is an application of Sard-Smale. 
By  Theorem \ref{transverse} the universal moduli space (locally over the
moduli space) is transversally cut out.  Consider the forgetful map 
\[ f_{\bGamma}^i : \M^{\univ,i}_{\bGamma,k,p,{\ul{\eps}},\lambda}(L) \to \cP^{{\ul{\eps}}} \quad (u: C \to X,
  P_\Gamma) \mapsto P_\Gamma .\] 
For map types $\bGamma$ of expected dimension at most one, the index
of $f_{\bGamma}^i$ is at most one and by Sard-Smale the set of regular
values of $f_{\bGamma}^i$ is comeager.  Since the space of map types
$\bGamma$ with underlying domain type $\Gamma$ is countable, there
exists a comeager set $\cP_\Gamma^{\reg} \subset \cP_\Gamma$  of perturbations $P_\Gamma$ extending the given
perturbations on the boundary so that
$ \M^{\univ,i}_{\bGamma,k,p,{\ul{\eps}},\lambda}(L) $ is regular for every map
type $\bGamma$ of expected dimension at most one.  
\end{proof}

\begin{theorem}  For any regular perturbation system 
$\ul{P} = (P_\Gamma)$, for any map type $\bGamma$ decorating
    $\Gamma$ of expected dimension at most one the moduli space
    $ \M_{\bGamma}(L) $ is transversally cut out and for any pair
    $\bGamma_1 \prec \bGamma_2$ with $\bGamma_1$ resp $\bGamma_2$
    having expected dimension zero resp. one, there is a natural
    gluing map $\M_{\bGamma_1}(L) \times [0,\eps) \to \M_{\bGamma_2}$.
\end{theorem}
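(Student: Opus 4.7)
The theorem has two parts: transversality of $\M_{\bGamma}(L)$ at all map types $\bGamma$ with expected dimension at most one, and construction of a gluing map $\M_{\bGamma_1}(L) \times [0,\eps) \to \M_{\bGamma_2}(L)$ whenever $\bGamma_1 \prec \bGamma_2$ have expected dimensions zero and one. The first is essentially a corollary of the Sard-Smale setup already in place; the second is a standard but substantial SFT-with-treed-boundary gluing argument which I sketch below.

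\emph{Transversality.} By the definition of a regular perturbation system (Definition \ref{def:preg}) and Theorem \ref{twolevels}, at every $u \in \M_{\bGamma}(L)$ with $\vdim \M_\bGamma(L) \le 1$ the linearized operator $\ti{D}_u$ of \eqref{linop} is surjective. The implicit function theorem for smooth Fredholm sections of Banach bundles (applied in each local chart $\M_\Gamma^i$ as in Section \ref{fredtheory}) then gives $\M_{\bGamma}(L)$ a smooth manifold structure of dimension $\on{Ind}(\ti{D}_u)$, and elliptic regularity patches these local charts together smoothly as noted after \eqref{rigidmap}.

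\emph{Gluing: construction of approximate solutions.} Every elementary morphism $\bGamma_1 \to \bGamma_2$ is of one of three types: collapsing or breaking an edge in $\Edge_0(\Gamma)$ or $\Edge_\infty(\Gamma)$; cutting an edge (forming a boundary or interior node on a surface component); or a new SFT-level degeneration where a neck piece $\XX[k_-,k_+]_j$ separates from the rest. For each such type I would:
\begin{enumerate}
\item choose a gluing parameter $\eps \in [0,\eps_0)$ (edge length, neck length $R = 1/\eps$, or Gromov-style node gluing parameter) so that $\eps = 0$ corresponds to the limit configuration in $\M_{\bGamma_1}(L)$;
\item for $u_1 \in \M_{\bGamma_1}(L)$ produce a pre-glued treed map $u_1 \#_\eps$, built from $u_1$ via cutoff functions on the strip-like and cylindrical ends (and a rescaling in the neck case) so that $\olp_J (u_1 \#_\eps)$ and the trajectory defect are of size $O(e^{-cR})$ in the weighted $W^{k-1,p,\lambda}$ norm;
\item apply the Newton/IFT scheme using a uniformly bounded right inverse $Q_\eps$ to $\ti{D}_{u_1 \#_\eps}$, obtained by patching the right inverses of $\ti{D}_{u_1}$ on the pieces (whose existence is guaranteed by the regularity of $\bGamma_1$ from Part 1) to produce a unique small correction $\xi_\eps$ with $\cF_{\bGamma_2}(\exp_{u_1 \#_\eps} \xi_\eps) = 0$;
\item verify that the resulting map $u_1 \mapsto (u_1, \eps) \mapsto \exp_{u_1 \#_\eps} \xi_\eps$ is smooth in $\eps$, injective, and has open image via the standard uniqueness part of the implicit function theorem.
\end{enumerate}

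\emph{Main obstacle.} The hardest estimates concern pieces of the configuration which are trivial cylinders or strips over a Reeb chord, which by Corollary \ref{onepos} can always appear on the necks. Here the $\R$-translation symmetry and the vertical/horizontal splitting $D_{u_v} \cong D_{u_v}^{\bv} \oplus D_{u_v}^{\bh}$ used in the proof of Theorem \ref{transverse} must be matched to the exponential weights $\kappa^\lambda$ so that the right inverse is bounded uniformly in $\eps$: one has to choose $\lambda$ strictly smaller than the spectral gap of the asymptotic operator at the Reeb chord (a simple eigenvalue comes from the Reeb action direction, and one wants to exclude it from the domain), and align the matching conditions at the new necks and edges with the cokernel of the patched operator. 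Once these weighted-Sobolev estimates are in place, the quadratic estimate for the non-linear part $\cF_{\bGamma_2} - \ti{D}_{u_1 \#_\eps}$ follows from the standard McDuff-Salamon-type bounds in \cite[Ch.~10]{ms:jh}, adapted to the broken-interval and strip-like-end setting as in Wendl \cite{wendl:sft} and \cite{flips}, and the gluing map of the statement is obtained.
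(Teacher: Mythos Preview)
Your proposal is correct and in fact more detailed than what the paper itself provides: the paper explicitly does \emph{not} give a proof of this theorem, instead pointing to the gluing results of Ekholm--Etnyre--Sullivan \cite[Chapter 10]{ees:lch} for the non-degenerate case and Palmer--Woodward \cite{pw:surg} for the Morse--Bott case, together with a two-sentence sketch (approximate solutions via cutoffs and geodesic exponentiation for node/edge-breaking degenerations, and the implicit function theorem directly for making zero-length edges positive). Your outline of pre-gluing, uniformly bounded right inverses patched from the pieces, and Newton iteration is exactly the scheme underlying those references, and your identification of the trivial-cylinder/weighted-Sobolev issue as the main analytic point is apt; there is nothing to correct.
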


This theorem, for which we do not give a proof, is a consequence of
gluing results similar to those in 
Ekholm-Etnyre-Sullivan \cite[Chapter 10]{ees:lch}, who prove gluing theorems in the non-degenerate case, and Palmer-Woodward \cite{pw:surg}, which deals with a similar Morse-Bott case. Given a regular
stratum-wise rigid treed holomorphic building $u: C \to \XX$ of type
$\Gamma_1$ representing a codimension one stratum $\M_{\Gamma_1}$ in
the boundary of a top-dimensional stratum $\M_{\Gamma_2}$ there exists
a unique family $u_\rho: C_\rho \to X_\rho$ of treed holomorphic
curves of type $\Gamma_2$ converging to $u$.   In the case of 
collapsing edges or making edges finite, each such sequence
$u_\rho$ is defined by first constructing an {\em  approximate solution}:
one removes small balls around a node and uses cutoff functions and geodesic exponential to construct the approximate solutions while in the case of trajectories one removes a small ball around the intermediate critical point and patches together using cutoff function.  On the other hand, 
the case of making zero-length edges positive length follows immediately from the implicit function theorem.

\subsection{Intersection multiplicities}
\label{intmult}

We develop a notion of fractional intersection multiplicity
with the divisors at infinity, related to the sum of 
actions 
of Reeb chords. Fractional intersection multiplicities with the divisors at infinity are defined
by integrating the Thom class.  

\begin{definition}
Let $u: (S,\partial S) \to (X,L)$ be a punctured
surface bounding $L$ asymptotic to Reeb orbits and chords. Then we get a compactified map $\ol{u}:\to (\ol{X},\ol{L})$.  The {\em  intersection number}
of $\ol{u}$ with $Y_\pm$ is the pairing of $[\ol{u}] \in H_2(\ol{X},\ol{L})$ with $[\on{Thom}_{Y_\pm}] \in H^2(\ol{X},\ol{L})$. 
\end{definition}

As usual, the intersection number may be computed as a sum of (now fractional) local intersection numbers, proportional to the sum of actions at the intersection points:

\begin{lemma} \label{angleeq} Suppose that $u: (S,\partial S) \to (X,L)$ is a punctured disk asymptotic to a collection of Reeb orbits
and chords $\gamma_e, e \in \mE(S)$.  The intersection
number $[\ol{u}] \cdot [Y_\pm] $ is equal to the sum of the 
actions
\[  [\ol{u}] \cdot [Y_\pm] = 
\sum_{e \in \mE_\pm(S)} \theta_e \] 
of the Reeb chords and orbits limiting to $Y_\pm.$
\end{lemma}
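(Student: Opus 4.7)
The plan is a standard Stokes' theorem argument using the explicit Thom form representative from Definition \ref{relthomclass}; we treat the case $Y_+$, the case $Y_-$ being entirely symmetric on the incoming ends.

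First I would fix the representative. Represent $[Y_+]^\dual$ by $\on{Thom}_{Y_+} = -\d(\rho \alpha_+)$, where $\rho$ is a compactly supported bump function equal to $1$ in a tubular neighborhood of $Y_+$ in $\ol X$ (extended by zero). By choosing $\rho$ to be supported on a small enough neighborhood of $Y_+$, the pull-back $u^*(\rho \alpha_+)$ is supported entirely on the outgoing ends of $S$; near an outgoing puncture $e \in \mE_+(S)$, in the cylindrical/strip-like coordinate $s$, we may arrange $\rho = \rho_0(e^{-s})$ for a bump function $\rho_0$ equal to $1$ near $0$, reflecting the identification of the compactification direction.

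Second I would truncate and apply Stokes. For $T \gg 0$, let $S_T \subset S$ be the compact bordered surface obtained by cutting each outgoing end at the slice $\Sigma_{e,T} := \{s=T\}$ (a circle for cylindrical ends, an interval for strip-like ends). Then
\begin{equation*}
[\ol u]\cdot [Y_+] \ = \ \int_{\ol S} \ol u^* \on{Thom}_{Y_+} \ = \ -\lim_{T\to\infty}\int_{S_T} \d\bigl(u^*(\rho\alpha_+)\bigr) \ = \ -\lim_{T\to\infty}\int_{\partial S_T} u^*(\rho\alpha_+).
\end{equation*}
The boundary $\partial S_T$ splits as $(\partial S \cap S_T) \cup \bigsqcup_{e \in \mE_+(S)} \Sigma_{e,T}$.

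Third I would show that the Lagrangian boundary pieces $\partial S \cap S_T$ contribute zero. On the cylindrical-near-infinity portion of the outgoing end, $L = (\sigma_{+,0},\infty)\times\Lambda_+$ (Definition \ref{def:lagcob}), and since $\Lambda_+$ is Legendrian with respect to $\alpha_+$ we have $\alpha_+|_{\Lambda_+} \equiv 0$, so $u^*(\rho\alpha_+)|_{\partial S \cap \text{end}} \equiv 0$; on the compact part of $S$ and on the incoming ends, $\rho\circ u \equiv 0$ by the support property. Hence only the slices $\Sigma_{e,T}$ contribute.

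Fourth I would evaluate the slice contributions via the model-map asymptotics. On each outgoing end, finite Hofer energy forces $u$ to converge exponentially to a trivial half-cylinder/strip of the form $u(s,t) = (\theta_e s, \gamma_e(\theta_e t))$ with $\gamma_e$ a Reeb chord or orbit of action $\theta_e$ and $\alpha_+(\dot\gamma_e) \equiv 1$ (Reeb normalization). Thus
\begin{equation*}
u^*(\rho\alpha_+)\bigl|_{\Sigma_{e,T}} \ = \ \rho_0(e^{-\theta_e T})\cdot\theta_e\,\d t \ + \ o(1),
\end{equation*}
and integration over $\Sigma_{e,T}$ with the orientation induced from the surface orientation (determined by $J\partial_s = \partial_\theta$, cf.\ Definition \ref{def:cyl}) yields $\pm\theta_e$ after $T\to\infty$, with the sign fixed by the identification of the cylindrical end with the local disk model around $Y_+$ under the symplectic-cut compactification. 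Combining the overall $-1$ from $\on{Thom}_{Y_+} = -\d(\cdots)$ with this sign, and summing over $e \in \mE_+(S)$, produces $\sum_{e \in \mE_+(S)}\theta_e$.

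The main obstacle is careful orientation bookkeeping. The identification of the cylindrical end $(s,t)\mapsto w = e^{-2\pi z}$ with a neighborhood of $Y_+$ in $\ol X$ (provided by the symplectic-cut construction) is a holomorphic but nontrivial reparametrization of the fiber direction, so one must verify that the induced orientation on $\Sigma_{e,T}$ matches the clockwise-vs-counterclockwise orientation of a small loop around $Y_+$ in the local disk model in the right way for the two minus signs (from $-\d$ in the Thom form, and from Stokes with the induced boundary orientation) to combine into a positive answer. The period-$1$ normalization of the fiber ensures the contribution per puncture is exactly the action $\theta_e$ with no factor of $2\pi$.
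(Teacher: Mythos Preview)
Your proof is correct and follows essentially the same approach as the paper: truncate the surface, apply Stokes' theorem to the explicit Thom form $-\d(\rho\alpha_\pm)$, and read off the actions from the slice integrals. The paper's own proof is a terse two-line version of exactly this argument; you have supplied the details the paper omits, namely the vanishing of the Lagrangian boundary contribution (via $\alpha_+|_{\Lambda_+}=0$ and the support of $\rho$) and the orientation bookkeeping at the slices.
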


\begin{proof} The proof is an argument using Stokes' formula.  The cutoff-function $\rho$ is equal to $1$ near the zero section.   Hence
\begin{eqnarray*}
 [\ol{u}] \cdot [Y_\pm] &=& 
\lim_{ s \to \infty} \int_{S - \cup_{e \in \mE_\pm(S)} \kappa_e^{-1}( \pm (s, \infty) \times Z_\pm) } 
\d (\rho \alpha_\pm) \\ &=&
\lim_{ s \to \infty} \int_{\partial ( S - \cup_{e \in \mE_\pm(S)} \kappa_e^{-1}( \pm (s, \infty) \times Z_\pm) ) }
\alpha_\pm 
=  \sum_{e \in \mE_\pm(S)} \theta_e  \end{eqnarray*}
as claimed. 
\end{proof}

\subsection{Compactness}

We will need a version of Gromov compactness for buildings in 
cobordisms with Lagrangian boundary conditions, adapted to the case
of domain-dependent perturbations. We also need to construct domain dependent perturbations which are coherent with respect to the bubblings and breakings, see Remark \ref{rem:induct}. We begin with setting up the notion of coherence for multi-valued perturbations and then prove a version of the Gromov compactness statement relevant to our setup.

\begin{definition}[Multivalued pull-back]
     Let $\Gamma$ be a combinatorial type of treed disk and 
     $e$ an edge of infinite length $\ell(e) = \infty$.  Thus, the graph $\Gamma$ 
     is obtained from gluing two types $\Gamma_1$
     and $\Gamma_2$ along edges $e_1,e_2$ respectively. Let
     \[ {P}_{\Gamma_{i}} =
       \sum_{j=1}^{k_i} c_{k,i} \widetilde P_{\Gamma_{i},k} \]
     be multivalued
     perturbations for $i \in \{1,2 \}$.   The multivalued pull-back, ${P}_{\Gamma}$, of ${P}_{\Gamma_{1}}$ and ${P}_{\Gamma_{2}}$ is defined as $${P}_{\Gamma}=\sum_{j_1=1}^{k_1} \sum_{j_2=1}^{k_2} c_{j_1,1}c_{j_2,2} \widetilde P_{\Gamma,j_1,j_2},$$ where $\widetilde P_{\Gamma,j_1,j_2}$ is defined as the pull-back of $\widetilde {P}_{\Gamma_1,j_1}$ and $\widetilde {P} _{\Gamma_2,j_2}$ 
  under the isomorphism 
\[ {\cU}_{\Gamma} \to   \pi_1^* {\cU}_{\Gamma_1} \times \pi_2^* {\cU}_{\Gamma_2}. \]
Here 
\[ \pi_b: {\cM}_{\Gamma} \to {\cM}_{\Gamma_b}, b \in \{1, 2 \} \] 
are the natural projections. 
\end{definition}

We first note the following construction. For any vertex $v \in \Ver(\Gamma)$,  let 
$\Gamma(v)$ be the graph with a single vertex
$v$ and the edges $e$ containing $v$.   Let 
\[ \cU_{\Gamma,v} = \{ (C,z) \in  \cU_{\Gamma} | z \in S_v \}  \] 
denote the union  of points lying on the components $S_v \subset S$.

\begin{definition}  A collection of multi-valued perturbations 
\[ \ul{P} = ( {P}_\Gamma  \in \cP_\Gamma) \] 
for all types of domain $\Gamma$
is {\em  coherent} if the following holds.

\begin{enumerate} 
\item {\rm (Collapsing Edges)} Whenever $\Gamma_1 \to \Gamma_2$, so that $\Gamma_2$
is obtained from $\Gamma_1$ by collapsing edges, the number of sheets of $\mvP_{\Gamma_1}$ is equal to the number of sheets of $\mvP_{\Gamma_2}$ 
  and each sheet
  $P_{\Gamma_2,i}$ restricts to the perturbation $P_{\Gamma_1,i}$ obtained from gluing on a neighborhood $\U_{\Gamma_2}$ as in Remark \ref{rem:induct} and their corresponding weights match $c^{\Gamma_1}_i =  c^{\Gamma_2}_i $. 
  That is, 
\[ J_{\Gamma_2,i} : \cS_{\Gamma_2} \to \J(X) \] 
restricts
  to $J_{\Gamma_1,i}$ on $\cS_{\Gamma_1} \subset \ol{\cS}_{\Gamma_2}$,
  the vector field    
  \[ \zeta_{\Gamma_2,\black,i}: \cT_{\Gamma_2,\black,i} \to \Vect(X)  \] 
  restricts to  $\zeta_{\Gamma_1,\black,i}$ on $\cT_{\Gamma_1,\black,i}$
  and similarly for the vector fields $\zeta_{\Gamma_2,\white,i}, \zeta_{\Gamma_1,\white,i}$.

\item {\rm (Breaking Edges)} Whenever $\Gamma$ has an edge of infinite
  length, and so obtained from gluing $\Gamma_1$ and $\Gamma_2$, then $P_\Gamma$ restricts to the multivalued pull-backs of $\widetilde{P}_{\Gamma_1}$ and $\widetilde{P} _{\Gamma_2}$ 
  under the isomorphism 
\[ \ol{\cU}_{\Gamma} \to   \pi_1^*  {\cU}_{\Gamma_1} \times \pi_2^* {\cU}_{\Gamma_2} \]
where $\pi_b: {\cM}_{\Gamma} \to {\cM}_{\Gamma_b}, b \in \{1, 2 \}$ are the natural projections. 
\item {\rm (Locality)} Let $C_\white \subset C$ be the subset consisting only of disk components and
  boundary edges.  We have a natural 
map 
\[ f_{\Gamma,v}: \cU_{\Gamma,v} \to \cU_{\Gamma(v)} \times \M_{C_\circ} \] 
which sends the pair $(C, z \in S_v)$ to the pair
$((S_v, z \in S_v),C_\circ)$.  We require that  
the restriction of the perturbation $J_\Gamma$
to $S_v$ is pulled back under
$f_{\Gamma,v}$.  In particular, if $\Gamma'$
is another component obtained from $\Gamma$ by removing
an interior leaf $T_e$ on some sphere component $v$ then 
$P_\Gamma$ induces a perturbation datum $P_{\Gamma'}$
for the type $\Gamma'$ by taking $J_{\Gamma'}$
to be constant on $S_v'$ and equal to $J_\Gamma$
on the other components.  Any $P_\Gamma$ holomorphic 
map from a curve $C$ of type $\Gamma$ to $X$
induces a $P_{\Gamma'}$-holomorphic map from the 
curve $C'$ to $X$, where $C'$ is obtained from $C$
by forgetting $T_e$.
\end{enumerate} 
\end{definition}

We have the following version of sequential Gromov compactness for treed holomorphic buildings. 

\begin{theorem} \label{compactness}
Let $\ul{P} = (P_\Gamma)$ be a coherent system of perturbations.  Any sequence of  stable
  buildings $u_\nu: C_\nu \to \XX$ with boundary in $L$
  with bounded $\ol{\omega}$-area $A(u_\nu)$ and bounded number of 
  leaves $n(u_\nu)$ has a subsequence Gromov-converging to a stable building $u: C \to \XX$. 
  \end{theorem}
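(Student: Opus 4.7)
The plan is to combine the standard SFT compactness machinery of Bourgeois--Eliashberg--Hofer--Wysocki--Zehnder \cite{sft} (and Abbas \cite{abbas:sft}) for maps into symplectizations with Lagrangian boundary with the tree-level compactness arguments of Cornea--Lalonde \cite{cornea:cluster} and Charest--Woodward \cite{flips}, adapting everything to our Morse--Bott circle-fibered setting and coherent domain-dependent perturbations. First I would establish \emph{a priori} energy bounds: the assumed bound on $A(u_\nu)$ combined with Lemma \ref{angleeq} bounds the total action at the punctures and hence, via Lemma \ref{anglechange}, both the horizontal energy $E^{\bh}(u_\nu)$ and the vertical energy $E^{\bv}(u_\nu)$ in the sense of Definition \ref{henergy}; the bounded number of leaves controls the combinatorial complexity, limiting the number of nontrivial domain components by the stability condition.

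Next I would extract a subsequential limit of the domains $C_\nu$ in the compactified moduli space $\ol{\M}$ of stable treed surfaces; by the properness of that moduli space, after passing to a subsequence there is a limiting type $\Gamma$ and a limit $C \in \ol{\M}_\Gamma$. On each fixed domain component one then carries out the standard pointwise $\eps$-regularity analysis: where $|du_\nu|$ remains bounded, elliptic bootstrapping using coherence and the stabilizing property of $J_\Gamma$ (from Definition \ref{def:ddp} and Proposition \ref{prop:je}) gives $C^\infty_{\loc}$-convergence; at points where $|du_\nu|$ blows up, one rescales to obtain a nontrivial holomorphic sphere, disk, plane, or half-plane, which by removal of singularities and the Hofer--Morse--Bott convergence theory produces either a bubble or a Reeb chord/orbit at a puncture. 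The area bound prevents infinite bubbling. Separately, along each edge $T_e \subset C_\nu$, the bounded-length Morse trajectories subconverge on compact subsets, and edges whose lengths diverge to $\infty$ give rise to breakings at critical points of $f_\diam$, $f_\black$, or $f_L$, producing additional semi-infinite edges in the limit.

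The third and main step is the neck-stretching analysis in the cylindrical ends, which is where the genuine Morse--Bott SFT compactness enters. When punctures develop or Reeb chords are approached, one uses the $\R$-action on $\ol{\R \times Z_\pm}$ to rescale each level, producing a multi-level holomorphic building with components in $\XX[k_-,k_+]_j$ whose limiting Reeb chords at matching punctures agree. The Morse--Bott condition forces the asymptotic Reeb chord to lie in a component of $\cR(\Lambda_\pm) \cong \Lambda_\pm$, and the matching with the trajectories $u_e : T_e \to \cR(\Lambda_\pm)$ emerging from the balancing condition (finite edges between levels, of the same rescaled length $\lambda_j$) is obtained by passing to the limit of the evaluation maps; here the coherence of $\ul{P}$ under breaking edges is precisely what ensures that the perturbation seen on the glued domain restricts correctly to each limit level, so the limit solves the perturbed equation on the right component.

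The main obstacle will be the Morse--Bott analysis of punctures in the circle-fibered setting: one must show that after rescaling, an escaping bit of a curve limits to either a trivial cylinder/strip, a punctured sphere or disk of positive vertical energy, or a Morse trajectory on $\cR(\Lambda_\pm)$, with no energy loss and with matching asymptotic Reeb chord. The standard argument follows Bourgeois's thesis and \cite{sft}, adapted using the circle-bundle structure to replace the usual asymptotic operator analysis with the explicit vertical/horizontal splitting $D_{u_v} \cong D^{\bv}_{u_v} \oplus D^{\bh}_{u_v}$ used in the proof of Theorem \ref{transverse}; the corresponding exponential decay estimates are uniform in the sequence, which is what prevents energy concentration escaping to a punctured strip/cylinder without producing a Morse edge. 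Finally one verifies stability of the limit building in the sense of Section \ref{sec:treedbuildings} by collapsing any unstable components (trivial cylinders with no non-trivial companion at that level, or empty levels) via the equivalence relation \eqref{edgecollapse}, yielding the desired stable Gromov limit $u : C \to \XX$.
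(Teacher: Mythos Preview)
Your proposal is correct and follows essentially the same strategy as the paper: separate the compactness problem into the surface part (handled by standard SFT compactness \cite{sft}, \cite{abbas:sft}, \cite{cm:com}, adapted to domain-dependent perturbations as in \cite{cw:traj} and to Lagrangian boundary as in \cite{chanda}, \cite{vw:trop}) and the tree part (handled by Morse trajectory compactness up to breaking). The paper's proof is considerably terser than yours, essentially just citing these references; your sketch fills in the operational content of what those citations contain.

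One point worth sharpening: the key technical reason coherence suffices for the surface part is that the perturbations are \emph{domain-independent near the nodes} and are pulled back from the glued perturbation on a neighborhood of the boundary of $\ol{\M}_\Gamma$; this is what guarantees that the almost complex structures $J_\Gamma(C_\nu)$ converge uniformly in all derivatives on compact subsets away from the nodes, so the limiting map is holomorphic for the limiting perturbation. You invoke coherence but do not isolate this mechanism. For the tree part, the paper argues by first projecting the trajectories $u_{e,\nu}: T_e \to \R \times \Lambda$ down to $\Lambda$ (or $\Pi$), applying standard Morse compactness there (citing \cite{audin}), and then lifting using the fundamental theorem of ODEs after choosing a normalizing translation in the $\R$-direction; your treatment of the edges is compatible with this but less explicit about the projection-then-lift structure.
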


\begin{proof}  The limits of the surface parts and tree parts  of the sequence
may be constructed separately.

\vskip .1in \noindent {\em Step 1:  After passing to a subsequence, the 
the surface parts of the map converge.}  This is a version of compactness in symplectic field theory, for which there are the approaches in Bourgeois-Eliashberg-Hofer-Wysocki \cite{sft} and Cieliebak-Mohnke \cite{cm:com}.  In these approaches a domain-independent almost complex structure was used;  the case of domain-dependent almost complex structures follows as in Charest-Woodward \cite{cw:traj}.  Lagrangian boundary conditions that extend over the neck were incorporated in Chanda \cite{chanda};  see also \cite{vw:trop}.    Note that the assumption on the domain-dependent perturbations being given by the gluing construction in Remark \ref{rem:induct} near the boundary implies that the almost complex structures of any sequence converge to the almost complex structures on the limiting curves uniformly in all derivatives on compact subsets away from the nodes, while the perturbations are domain-independent on neighborhoods of the nodes by assumption.

\vskip .1in \noindent {\em Step 2: After passing to a subsequence
the tree parts of the sequence converge.}    Let $u_\nu: C_\nu \to \XX$ be a sequence as in the statement of the Theorem.
Each edge  $T_e, e \in \Edge(\Gamma)$ in a level $C_i$ mapping to $\R \times \Lambda$ gives rise to a  sequence of trajectories by restriction.
By compactness of gradient trajectories up to breaking,
the projections
\[ \ol{u}_{e,\nu}:= {p} \circ u_\nu: [-T_\nu,T_\nu] \to \Pi \]  
converge after passing to a subsequence to a (possibly broken) trajectory.  That is, there exist sequences $t_\nu$ so that 
the trajectories $\ol{u}_{e,\nu}(t + t_\nu)$
converge in all derivatives on compact sets to some limit
\[ \ol{u}_{e,i}: [-T_{e,i},T_{e,i}] \to \Lambda \] 
which is a trajectory of of $\grad(f_\circ)$, for some $T_{e,i} \in [0,\infty]$; see for example Audin-Damian \cite{audin}.  Choose a sequence 
$T_\nu \in \R$ so that the sequence $T_\nu \cdot u_{e,\nu}(0)$ (that is, the translation of 
$u_{e,\nu}(0)$ in the $\R$-direction by $T_\nu$) converges to 
some limiting point $u_{e}(0)$;  such  a sequence $T_\nu$
exists since $\Lambda$ is compact.   By the fundamental theorem of
ordinary differential equations, 
the sequence $u_{e,\nu}$ converges in all derivatives on compact sets
to a trajectory of $\zeta$.   Since the projections $\ol{u}_{e,i}$ form a broken trajectory,
the lifts $u_{e,i}$ form a broken trajectory.
\end{proof} 

 We analyze the boundary of the one-dimensional moduli
  spaces $\ol{\M}(\Lambda)_1$ resp. $\ol{\M}(L)_1$ from \eqref{dlocus},
  assuming that every map is regular.  
  
  \begin{definition} \label{truefake}
  \begin{enumerate} 
  \item Strata
  $\M_{\bGamma}(\Lambda)$ of configurations $u: C \to \XX$
  with a single edge $T_e \subset C $ of length  zero 
  and of expected dimension $\vdim \M_{\bGamma}(\Lambda) =0 $ will be called {\em  fake boundary
    components} of $\ol{\M}(\Lambda)_1$.  If  $ \M_{\bGamma}(\Lambda) $ is regular then 
    there are exactly two one-dimensional strata
  $\M_{\bGamma'}(\Lambda), \M_{\bGamma''}(\Lambda)$ containing $  \M_{\bGamma}(\Lambda) $ in their closure,
  %
  so that 
  \[ \ol{\M}_{\bGamma'}(\Lambda) \cap \ol{\M}_{\bGamma''}(\Lambda) =  \M_{\bGamma}(\Lambda) .\]
These strata  consist of configurations $u: C \to \XX$ with an edge $e$ of
  positive length $\ell(e)$, resp. the two adjacent disk components
  $S_{v_-}, S_{v_+}$ have been glued to form a single disk component.
  \item Strata with
  an edge of infinite length $\ell(e) = \infty$ and of expected dimension $\vdim \M_{\bGamma}(\Lambda) =0 $ are {\em  true boundary
    components} of $\ol{\M}(\Lambda)_1$.  There are two types of such edges $e$, depending on
  what type of edge has acquired infinite length:
\begin{enumerate} 
\item edges $e \in \Edge_{\circ}(\Gamma)$ corresponding to nodes $w \in \partial S$ separating components $S_{v_+}, S_{v_-}$ in the same level and
\item edges  $e \in \Edge_\diam(\Gamma) $
  joining different levels via Reeb   chords $\gamma \in {\cR}(\Lambda)$.

\end{enumerate} 
\end{enumerate}
The definitions of fake and true boundary strata of $\ol{\M}(L)_1$ are  similar.  
\end{definition}

We have the following description of the boundary strata of the one-dimensional moduli spaces
of buildings in cobordisms, stated in two separate theorems.

\begin{theorem} \label{twolevels2} Suppose that $\ol{P} = (P_\Gamma)$ is a regular, coherent 
collection of perturbation data for maps to $\R \times Z$ bounding a cylinder $\R \times \Lambda$ on a  Legendrian $\Lambda$.
The boundary of the one-dimensional stratum $\ol{\M}(\Lambda)_1$ is a union of strata $\M_{\bGamma}(\Lambda)$ where $\bGamma = (\bGamma_1,\bGamma_2)$
is a type of building $u: C \to \R \times Z $ with exactly two levels
$u_1: C_1 \to \R \times Z, u_2: C_2 \to \R \times Z$.
\end{theorem}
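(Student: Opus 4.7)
The strategy combines sequential Gromov compactness with a codimension count. Given a sequence $u_\nu \in \ol{\M}(\Lambda)_1$, Theorem \ref{compactness} extracts a subsequence Gromov-converging to a stable treed holomorphic building $u: C \to \XX$ with some number $k \ge 1$ of symplectization levels; the coherence of $\ul{P}$ guarantees the limiting configuration is $P_{\bGamma}$-holomorphic for the limiting domain type $\bGamma$.

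The heart of the argument is the codimension count. By the regularity hypothesis and Theorem \ref{twolevels}, every stratum $\M_{\bGamma}(\Lambda)$ appearing in $\ol{\M}(\Lambda)$ of virtual dimension at most one is transversally cut out, so its actual dimension equals its virtual dimension. A $k$-level building carries $k - 1$ ``breaking edges'', namely edges in $\Edge_{\diam}(\Gamma) \cup \Edge_{\circ}(\Gamma)$ that have acquired infinite length. Each breaking imposes one unit of codimension in the compactified moduli space: the matching condition at the two ends of a broken edge (evaluation to the same Reeb chord, or convergence to the same Morse critical point in $\Lambda$) is transverse by the Fredholm analysis of Section \ref{fredtheory}, and each symplectization level is taken modulo its own $\R$-translation. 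Hence a $k$-level stratum has codimension at least $k-1$ in $\ol{\M}(\Lambda)$, and to contribute to the topological boundary of the one-dimensional stratum we must have $k = 2$.

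It remains to exclude the other codimension-one degenerations. Interior sphere bubbling is ruled out by Lemma \ref{nospheres} combined with the stabilizing condition on $J_{\Gamma}$ from Proposition \ref{prop:je}, since any rigid sphere meeting the Donaldson hypersurface $D$ with the required multiplicity already sits in codimension at least two. Non-constant disk bubbles without a Reeb puncture are ruled out by Corollary \ref{onepos}, applied to the exact filling $\R \times \Lambda \subset \R \times Z$, and constant disk bubbles violate stability. Finally, strata with a zero-length edge are the fake boundary components of Definition \ref{truefake}: each such stratum is approached from exactly two top-dimensional strata in $\ol{\M}(\Lambda)$ (one from resolving the edge to positive length, one from smoothing the two adjacent disk components), so these configurations lie in the interior rather than in the topological boundary. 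The main obstacle in executing this plan is to verify rigorously, via the gluing theorem announced after Theorem \ref{twolevels}, that every two-level configuration of codimension one admits a neighborhood in $\ol{\M}(\Lambda)$ modeled on a half-line $[0,\eps)$, so that these strata indeed contribute to the topological boundary with the expected one-sided structure.
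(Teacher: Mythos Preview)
Your approach is essentially the same as the paper's: invoke sequential compactness, then use regularity and dimension/codimension counts to exclude all degenerations except two-level buildings, with zero-length edges identified as fake boundary.

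The one place where your writeup is looser than the paper's is the treatment of interior sphere bubbling. You bundle together two genuinely different mechanisms. The paper separates them: (i) an interior node joining two components \emph{within the same level} is a codimension-two phenomenon in the moduli of domains (the gluing parameter at an interior node is complex), so such strata cannot appear in the boundary of a one-dimensional space---this has nothing to do with Lemma~\ref{nospheres}; (ii) an interior node at a Reeb orbit, separating levels, forces one side of the node to consist entirely of sphere components (since the disk part of the domain tree is connected), and then the explicit index computation of Lemma~\ref{nospheres} shows that such a sphere-tree cannot be rigid. Your sentence invoking both Lemma~\ref{nospheres} and ``codimension at least two'' reads as if these were the same argument; they are not, and the paper's proof treats them in separate paragraphs. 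This is a clarity issue rather than a logical gap---you cite the correct ingredients---but as written a reader could not reconstruct why case~(ii) is excluded, since the codimension-one nature of a level-breaking node is exactly what makes that case delicate.
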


\begin{theorem} \label{twolevels3} Suppose that $\ol{P} = (P_\Gamma)$ is a regular, coherent 
collection of perturbation data for a tame Lagrangian cobordism pair $(X,L)$ with convex end $(Z_{+}, \Lambda_{+})$ and concave end $(Z_{-}, \Lambda_{-})$.
The  boundary of $\ol{\M}_\bGamma(L)_1$ is a union of strata $\M_\bGamma(L)$ where $\bGamma = (\bGamma_1,\bGamma_2)$ a type corresponding to a building that is either
\begin{enumerate}
    \item  a treed building $u: C \to \XX $ with exactly two levels $u_1:C_1 \to X,u_2: C_2 \to  \R \times Z_\pm$, one of which maps to $X$
    and the other to $\R \times Z_\pm$, 
    separated by Morse trajectory on $\cR(\Lambda_\pm)$ of infinite length;  or
    %
%
\item a building $u: C \to \XX$ with a single level mapping to $X$ 
consisting of two components $u_-: C_- \to X,u_+: C_+ \to X$ glued together
at an infinite length trajectory $u_e: T_e \to X$ at a critical point of the Morse function $f_{\black}$.
\end{enumerate}
  \end{theorem}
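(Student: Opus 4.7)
The plan is to combine the Gromov-type compactness result (Theorem \ref{compactness}) with the regularity provided by Theorem \ref{twolevels} and the tameness conditions \ref{p1}--\ref{p3} to classify the codimension-one strata of $\ol{\M}_\bGamma(L)_1$. I will take a sequence $u_\nu : C_\nu \to \XX$ in a one-dimensional open stratum, extract a Gromov-convergent subsequence with limiting building $u : C \to \XX$ of some type $\bGamma'' \prec \bGamma'$, and then argue by dimension counting and the tameness-based bubbling obstructions that $\bGamma''$ must be of the type described in case (1) or case (2).

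The next step is to distinguish fake from true boundary components in the sense of Definition \ref{truefake}. By regularity of the perturbation system, $\vdim \M_{\bGamma''}(L) = 0$, and this drop of expected dimension by one is produced either by collapsing an edge to length zero or by extending an edge to length infinity. Strata with an edge of zero length are fake boundary components: a neighborhood of such a configuration in $\ol{\M}_\bGamma(L)$ contains two adjacent one-dimensional strata, one where the edge has positive length and one where it has been collapsed so that the two adjacent disk components are glued, so topologically we are at an interior point. Hence the genuine topological boundary consists of strata with some edge $e$ of infinite length. Edges in $\Edge_\black(\Gamma)$ that become infinite correspond to a Morse trajectory on the cobordism $L$ breaking at a critical point of $f_\black$; this is precisely case (2). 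Edges in $\Edge_{\white,\pm}(\Gamma)$ that become infinite correspond to the length scale between adjacent levels diverging, producing a new neck level modelled on $\R \times Z_\pm$ in the sense of symplectic field theory; this is precisely case (1). A further argument shows that exactly one edge becomes infinite in a codimension-one stratum: any two simultaneous breakings impose two independent matching conditions on the limit, dropping the virtual dimension by at least two, which contradicts regularity for a stratum of $\vdim \geq 0$.

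The main obstacle is ruling out the remaining bubbling phenomena that a priori could contribute to the boundary. First, sphere bubbling on a neck or in the cobordism is excluded by the dimension estimate in Lemma \ref{nospheres}, which uses \ref{p2} to show that punctured holomorphic spheres in $\ol{X} - Y_+$ meeting $Y_-$ live in moduli spaces of strictly positive expected dimension and thus cannot occur in a codimension-one stratum for generic perturbations; a parallel argument at the outgoing end uses \ref{p3}. Second, I must show that the split between the two levels in case (1) is of the prescribed form: by Corollary \ref{onepos}, any non-trivial component in a neck level $\R \times Z_\pm$ carries at least one incoming puncture with positive Reeb action, so the action-decreasing property prevents entire towers of neck levels from appearing; the balancing and matching conditions then force exactly the two-level decomposition $C = C_0 \cup C_{\pm 1}$. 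Third, disk bubbling with an interior edge of finite length is absorbed into the interior of $\ol{\M}_\bGamma(L)_1$ as noted above, and reaches the genuine boundary only through case (2), so no separate analysis is needed for such bubbles.

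Finally, I will verify the converse: that every configuration of type (1) or (2) arises as the limit of a one-parameter family in $\M_\bGamma(L)_1$, by applying the standard gluing theorem for treed holomorphic buildings at an infinite edge, as in the closed analogue of \cite[Theorem 2.4.5]{orient} combined with the Morse-Bott gluing argument from \cite{pw:surg} for neck stretching. This gluing produces, for each element of a true boundary stratum, a unique $[0,\varepsilon)$-family of configurations in the adjacent one-dimensional stratum, showing that the strata identified above are in fact the full boundary $\partial \ol{\M}_\bGamma(L)_1$ and are locally of the expected boundary form.
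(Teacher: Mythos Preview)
Your proposal is essentially correct and follows the same strategy as the paper: invoke compactness, use regularity and tameness to rule out unwanted degenerations, and classify the remaining codimension-one strata by which edge becomes infinite. Two small points are worth tightening. First, you do not explicitly rule out the formation of an interior node within a single level (a sphere bubble attached at an interior point, with both components mapping to the same piece of $\XX$); the paper handles this separately as a codimension-two phenomenon in the moduli of domains, which is distinct from the Lemma~\ref{nospheres} argument you cite for punctured spheres at Reeb orbits. Second, your sentence ``the action-decreasing property prevents entire towers of neck levels from appearing'' is not the operative mechanism: Corollary~\ref{onepos} only guarantees each nontrivial neck component has an incoming puncture, and what actually excludes multi-level towers is the codimension count you give in the preceding paragraph (each extra level drops the expected dimension by one). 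With those two clarifications, your argument matches the paper's.
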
 

\begin{proof}[Proof of Theorems \ref{twolevels2} and \ref{twolevels3}]  We prove the claim for the moduli space $\ol{\M}(L)_1$ of buildings
in a cobordism $L$ between $\Lambda_-$ and $\Lambda_+$ 
described in Theorem \ref{twolevels3};  the case of buildings in 
$\R \times \Lambda$ described in 
Theorem \ref{twolevels2}  is easier.   Let $u_\nu: C_\nu \to \XX$  be a sequence of rigid maps of type $\bGamma$ with boundary in $\LL$  in a moduli space of dimension one  converging to a limit $u : C \to \XX$ of some type $\bGamma_\infty$.

\vskip .1in  \noindent {\em Step 1: 
Interior nodes do not form in the limit.  }     By assumption $\bGamma$ has no interior nodes; 
if the type $\bGamma_\infty$ has an interior node then the dimension of the moduli space $\M_{\bGamma}(L)$ of 
buildings containing $u$ has dimension at least two less than that of $u_\nu$, and so negative expected dimension. This is a contradiction. 


\vskip .1in  \noindent {\em Step 2: Nodes at Reeb orbits do not form in the limit.   } 
Recall our tameness assumptions in Definition \ref{def:tamepair}. Denote by
$\Gamma_\circ$ the type obtained by collapsing all sphere components
$S_v \subset S$ to interior leaves. Suppose $S_{v_1}, S_{v_2}$ are connected by a
  node $w$ mapping to a Reeb orbit $\gamma$; that is, $S_{v_1}$ and $S_{v_2}$
  each have a cylindrical end corresponding to an edge
  $e \in \Edge_\white(\Gamma)$.  The disk components $S_{v_1}, S_{v_2}$ are parts
  of different levels in the domain of the building $u$.  Let $\bGamma = \bGamma_1 \cup
  \bGamma_2$  denote the decomposition of $\bGamma$ into subgraphs obtained by dividing at $e$. 
  Since the subgraph $\Gamma_\circ$ of disk
  components is connected, either $\bGamma_1$ or $\bGamma_2$ consists
  entirely of spherical vertices $v \in \Ver_\black(\bGamma)$. 
  Suppose $C = C_1 \cup C_2 $ is the corresponding decomposition of domains, 
  so that $C_2$ is a treed sphere and let $u_2 = u|_{C_2}$.
  Since the combinatorial type is a tree, we may assume that $C_2$
  has incoming punctures but no outgoing punctures or vice versa, and in particular,
  $u_2$ is not a cover of a trivial cylinder.

\vskip .1in  \noindent {\em Case:  $u_2$ maps to $X$ and has a single incoming puncture and no outgoing punctures.}   As in Lemma \ref{nospheres}, we identify the moduli space with maps to $\ol{X} - Y_-$
with a $[Y_+]. [\ol{u}_2] - 1$ vanishing derivatives at $Y_+$ and an asymptotic marker, so that 
\begin{eqnarray*} 
\dim \M_{\bGamma_2}(L) &=& \dim(Y_+) + 2 c_1(\ol{X} - Y_-). [\ol{u}_2]
- 2 ( [Y_+]. [\ol{u}_2] - 1) + 1  - 4 \\
&=& \dim(Y_+) + 2 c_1^{\on{log}}(\ol{X} - Y_-). [\ol{u}_2]
 - 1 \\
&\ge&  \dim(Y_+) + 2 (1 + \lambda_-)[\ol{\omega}] . [\ol{u}_2] - 1
> \dim(Z_+)  \end{eqnarray*} 
Here the $-4$ appears as the dimension of the automorphism group of a once-pointed genus zero curve; the term $+1$ appears because of the
asymptotic marker at the puncture, the term $\dim(Y_+)$ appears since the puncture is required to map to $Y_+$, the term 
$- 2 ( [Y_+]. [u_2] - 1)$ is the correction to the dimension 
arising from the requirement that the first $[Y_+]. [u_2] - 1$
derivatives of the map vanish at the intersection point with $Y_+$
and $2 c_1(\ol{X} - Y_-)$ is the usual contribution of the first Chern class in the Riemann-Roch formula.  
The last line follows from 
\[ 2 c_1^{\on{log}}(\ol{X} - Y_-). [u_2]
=2 (1 + \lambda_-)[\ol{\omega}] . [u_2] > 2 \] 
since $  c_1^{\on{log}}(\ol{X} - Y_-)$ is an integral class and $[\ol{\omega}]. [u_2]$ is positive.     Therefore, the moduli space cannot be made rigid by adding constraints at the puncture
at $Z_+$.

\vskip .1in  \noindent {\em Case: $u_2$ is a level with a single outgoing puncture and no incoming punctures.  }  This case also  violates Lemma \ref{nospheres}.
%
%

\vskip .1in  \noindent {\em Case: $u_2$ is a level mapping to $\R \times Z_\pm$ with one incoming puncture
and no outgoing punctures. }  

We view the moduli space of maps $u_2$ as the space of holomorphic genus zero maps to $\ol{X}$ with $[Y_\pm]. [\ol{u}_2] - 1$ vanishing derivatives at $Y_\pm$, and an additional asymptotic marker at the puncture. 
We have  
\begin{eqnarray*} \dim \M_{\bGamma_2}(\Lambda) &=&
\dim(Y_\pm) + 2 c_1(\mathbb{P}(N_+ \oplus \C)). [\ol{u}_2] 
- 2 ([Y_\pm]. [\ol{u}_2] - 1) +1-4-1 
\\
&= & \dim(Y_\pm) + 2 p^* c_1(Y_\pm). [\ol{u}_2]  - 4\\
&= & \dim(Y_\pm) + 2 c_1(Y_\pm). [p \circ \ol{u}_2]  - 4 \\ 
&>  &  \dim (Y_\pm) + 1 .\end{eqnarray*}
Here the term $1 + 4 - 1$ appears from the contributions of the asymptotic marker, the automorphisms of a once-punctured genus zero curve, and the translation automorphisms on the codomain, respectively; the second follows from the equality 
 $c_1(\mathbb{P}(N_+ \oplus \C)) =  2 p^* c_1(Y_\pm) + [Y_\pm]$, where $Y_\pm$ is embedded as the zero section, and 
the last inequality uses the fact 
that ${Y}_\pm$ has minimal Chern number at least three, by the monotonicity assumption in \eqref{def:tamepair}.  Thus, the moduli space cannot be rigid by the addition of a constraint, as in 
Remark \ref{rem:splittype}.

\vskip .1in  \noindent {\em Step 3: We describe the boundary configurations for the one-dimensional moduli spaces. } Let $u: C \to \XX$ be a configuration in the boundary of a one-dimensional component
of $\M(\L)$.   By the previous paragraphs, $C$ has a single 
boundary edge $T_e$ with zero or infinite length.   In the case of zero length, $u$ is contained in a fake boundary component in the sense of Definition \ref{truefake}, and is not a topological boundary point.  Thus the length of $T_e$ is infinite, and either
$T_e$ separates two levels $C_i, C_{i+1}$ or two components $C',C''$ of the same level $C_i$, as shown in Figure \ref{fig:zeroin}.  
These are the possibilities listed in the statement of the Theorem.
\end{proof}

\begin{figure}
    \centering
    
    \scalebox{.4}{
    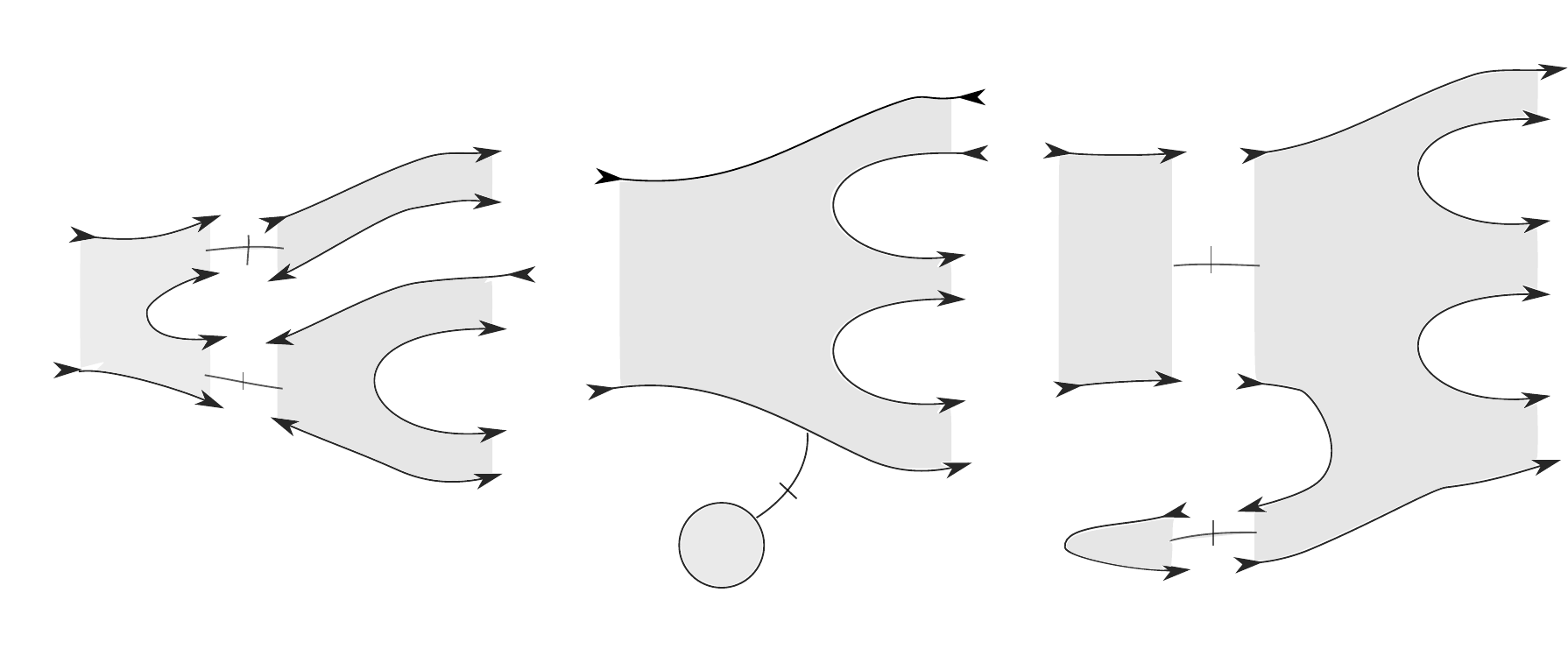}
    \caption{Three types of boundary configurations (a) two levels with one incoming puncture on each component (b) one level with a broken trajectory (c) two levels with zero incoming puncture on some component}
    \label{fig:zeroin}
\end{figure}

\subsection{Orientations} 
\label{sec:orientations}

Orientations of  the moduli spaces where
Legendrians resp. Lagrangians are equipped with relative spin structures have already been constructed by 
Ekholm-Etnyre-Sullivan \cite{ees:orient}, based on early work of 
Fukaya-Oh-Ohta-Ono \cite{fooo:part1}.   We provide a brief outline of how to orient the moduli spaces of treed disks in our setup.

Orientations on the moduli spaces are constructed by the following deformation argument
from Fukaya-Oh-Ohta-Ono \cite{fooo:part1}.  The tangent space at a regular element $u \in \M(L)$ is isomorphic to the kernel,  $\ker \ti{D}_u$, of the linearized operator $\ti{D}_u$
from \eqref{linop}:
\[ T_u \M(L) := \ker(\ti{D}_u) .\] 
The determinant line of $\ti{D}_u$ is denoted 
\[ \DD_u = \Lambda^{\top} (\ker(\ti{D}_u)) 
\cong \Lambda^{\top} (\ker(\ti{D}_u)) \otimes \Lambda^{\top}
(\coker(\ti{D}_u)^*)  . \] 
An orientation of $\M(L)$ is a non-zero element $o_u$ of the determinant line, modulo  multiplication by a positive scalar.

Any path of Fredholm operators induces an identification of
determinant lines up to isotopy, and orientations are induced by a particular 
equivalence class of deformations induced by the relative spin structure. 
First, we glue on Cauchy-Riemann operators on the ends to obtain Cauchy-Riemann operators on closed surfaces.  For each end $e$, let 
$\ti{D}_u^{e,\pm}$ be Cauchy-Riemann operators on punctured surfaces $S_e$ obtained  whose boundary condition is given by a path 
$\kappa$ in $T_{\gamma(t)}( \R \times Z)$
connecting $T_{\gamma(0)} (\R \times \Lambda)$ with $T_{\gamma(1)} (\R \times \Lambda)$
in the family of isomorphic  $T_{\gamma(t)} \R \times Z $; see for example 
\cite[Section 2.4]{orient}.
The Cauchy-Riemann operators $\ti{D}_u$ on 
collection of disks $\ol{S}_v$ without punctures can be glued 
with the operators  $\ti{D}_u^{e,\pm}$.  

The orientations on the moduli spaces are defined by orientations on suitable determinant lines.
Let $ \Sigma^\pm_{\ev_e(u)}$ denote the stable resp. unstable 
manifolds associated to the critical point $\ev_e(u)$
at the end of the edge $e$.  Define 
\[ \DD_e^\pm := \det(\Sigma^\pm_{\ev_e(u)}) \otimes 
\DD_{\kappa(e)} \]
where $\DD_{\kappa(e)}$ is the determinant of the index of the
Cauchy-Riemann operator $\ti{D}_u^{e,\pm} $ associated to the capping path $\kappa$, tensored with
the determinant line 
$\det(T_{\gamma(0)} \Lambda \cap T_{\gamma(1)} \Lambda)$ (using the identifications
of $T_{\gamma(t)} (\R \times Z)$ 
for various  values of $t$ given by parallel transport).  For each surface component
$S_v$ we have an isomorphism of $\DD_u$ with a product of determinant lines
\begin{equation} \label{components} \DD_u \cong \det(T_{[S_v]} \M_\Gamma) \otimes
  \bigotimes_{e \in \cE_-(\Gamma_v)} \DD_e \otimes \DD_{u}^{\bv} \otimes
  \bigotimes_{e \in \cE_+(\Gamma_v)} \DD_e^+ .
\end{equation}
The  relative spin structure on  $L$ gives a deformation of 
$\DD_u^{\bv}$ to a combination of Cauchy-Riemann operators on spheres and operators
on disks with trivial bundles and boundary conditions, as in \cite{fooo:part1}
and \cite[Section 4]{orient}.  For the subset $\M(L)_0$ of rigid maps denote by
\[ 
\eps :  \M(L)_0 \to \{ \pm 1 \} \] 
the sign obtained by comparing the given orientation to the canonical
orientation of a point.   In the case $L = \R \times \Lambda$,
we obtain orientations on $\M(\Lambda) = \M(L)/\R$ as well, and in particular 
a sign map $ \eps: \M(\Lambda)_0 \to \{ \pm 1 \}$.

This completes the construction of transversality, compactness, and orientations of the moduli spaces of treed holomorphic buildings. In particular, there is a well-defined signed count of treed holomorphic buildings as in Figure \ref{fig:zeroin}. In the next part of the series, we use  Theorems \ref{twolevels} and \ref{twolevels2} to construct Legendrian contact homology for fibered contact manifolds satisfying the above assumptions, and maps in homology associated to Lagrangian cobordisms.

\bibliography{leg}{}
\bibliographystyle{plain}
\end{document}